\documentclass[12pt]{amsart}
\usepackage{geometry}
\usepackage{amsmath, amssymb}
\usepackage{amscd}
\usepackage{enumitem}
\usepackage{comment}
\usepackage[english]{babel}
\usepackage{graphicx}
\usepackage{epstopdf}
\usepackage{amsaddr}
\geometry{letterpaper}
\bibliographystyle{amsalpha}

\numberwithin{equation}{section}

\newtheorem{thm}{Theorem}[section]
\newtheorem{lem}[thm]{Lemma}
\newtheorem{prop}[thm]{Proposition}

\newtheorem*{thma}{Theorem A}
\newtheorem*{thmb}{Theorem B}
\newtheorem*{thmc}{Theorem C}
\newtheorem*{thma1}{Theorem A.1}
\newtheorem*{thma2}{Theorem A.2}
\newtheorem*{thma11}{Theorem A.1.1}
\newtheorem*{thmc1}{Theorem C.1}

\theoremstyle{definition}
\newtheorem{defn}[thm]{Definition}

\theoremstyle{remark}
\newtheorem{rem}[thm]{Remark}


\begin{document}

\begin{abstract}
Let $M$ be any compact four-dimensional PL-manifold 
with or without boundary (e.g. the four-dimensional sphere or ball).
Consider the space $T(M)$ of all simplicial isomorphism classes of triangulations of $M$ endowed with the metric defined as follows: the distance between a pair of triangulations is the minimal number of bistellar transformations required to transform one of the triangulations into the other. Our main result is the existence of 
an absolute constant $C>1$ such that for every $m$ and all sufficiently large $N$ there
exist more than $C^N$ triangulations of $M$ with at most $N$ simplices such
that pairwise distances between them are greater than
$2^{2^{\ldots^{2^N}}}$ ($m$ times).

This result follows from a similar result for the space of all balanced presentations of the trivial group. (``Balanced" means that the number of generators
equals
to the number of relations). This space is endowed with the metric defined as the minimal number of Tietze transformations between finite presentations.
We prove a similar exponential lower bound for the number of balanced presentations of length $\leq N$
with four generators that are pairwise
$2^{2^{\ldots^{2^N}}}$-far from each other.
If one does not fix the number of generators, then we establish a super-exponential lower bound $N^{const\ N}$ for the number of balanced presentations
of length $\leq N$ that are
$2^{2^{\ldots^{2^N}}}$-far from each other. \end{abstract}

\title[Sizes of spaces of triangulations of $4$-manifolds]{Sizes of spaces of triangulations of 4-manifolds and balanced presentations of the trivial group.}

\author {Boris Lishak} 
\address{The University of Sydney}
\email{boris.lishak@sydney.edu.au}

\author{Alexander Nabutovsky}
\address{University of Toronto}
\email{alex@math.toronto.edu}

\maketitle

\section{Main Results}
In this paper we prove results about balanced presentations of the trivial group (Theorem B, Theorem C), triangulations of compact PL $4$-manifolds (Theorem A), Riemannian metrics subject to some restrictions on some compact smooth $4$-manifolds (Theorem A.1, Theorem A.2, Theorem A.1.1), and contractible $2$-complexes (Theorem C.1). All of these theorems imply that the spaces of corresponding structures are large: we find an exponential, or in some cases super-exponential number of presentations (correspondingly triangulations, metrics, $2$-complexes) which are extremely pairwise distant in some natural metrics. Exponentially many here means as a function of the length of presentations, number of simplices, etc.

The geometric theorems follow from the ``group" theorems B and C, which are a development of results from \cite{lishak}, \cite{bridson3}. These papers contain independent and different constructions of infinite sequences of balanced presentations of the trivial group that are very distant from trivial presentations. Here we combine the techniques from \cite{lishak} and \cite{bridson3} as well as some ideas of Collins \cite{collins} to produce an exponential and super-exponential number of pairwise distant presentations. (The growth is exponential in length, when the number of generators is fixed, and super-exponential , if it is arbitrary.)  From a group-theoretic perspective the main technical novelty of the present paper is that here we are forced to treat balanced presentations of the trivial
group that are very distant from trivial presentations as group-like objects, introduce
concepts of  homomorphisms and isomorphisms between these objects, and learn to prove that
they are not isomorphic (when this is the case).

The ``A" theorems are similar to results in dimension greater than $4$ obtained in \cite{nabutovsky2},\cite{nabutovsky3},\cite{nabutovsky3},\cite{nabutovsky4},\cite{weinberger} using different group-theoretic techniques. In these dimensions it was possible to obtain an even stronger analogue of Theorem A.2 (see \cite{nabwein1},\cite{nabwein2},\cite{weinberger},\cite{nabutovsky6}), without the topological assumption on the manifold and for a wider class of Riemannian metrics. It's not clear if it is possible to implement the techniques of this paper to prove such a generalization. We will give the precise statements now.

Let $M$ be a PL-manifold. Here {\it a triangulation} of $M$ is a simplicial complex $K$ such that its geometric realization $\vert K\vert$ is PL-homeomorphic to $M$.
We do not distinguish between simplicially isomorphic triangulations and regard them as identical. Thus, the set $T(M)$ of all triangulations
of $M$ is discrete, and for each $N$ its subset $T_N(M)$ that includes all triangulations with $\leq N$ simplices is finite. It is easy to
see that the cardinality of $T_N(M)$ is at most $N^{cN}$ for some $c=c(M)$. It is a major unsolved problem (cf. \cite{fro}, \cite{grquestions}, \cite{nabutovsky5}) if this cardinality can
be majorized by an exponential function $c^N$. The set $T(M)$ can be endowed with a natural metric defined as the minimal number
of bistellar transformations (a.k.a. Pachner moves) required to transform one of the two triangulations to the other. Recall that a bistellar transformation
is a local operation on triangulations preserving their PL-homeomorphism class. These operations can be described as follows:
One chooses a subcomplex $C$ of the triangulation formed by $i$
adjacent $n$-dimensional simplices, $i\in\{1,\ldots, n+1\}$, that is simplicially isomorphic to a subcomplex of the boundary $\partial \Delta^{n+1}$
of the standard $(n+1)$-dimensional simplex. Then one removes $C$ from the triangulation and replaces it be the complementary
subcomplex $\partial \Delta^{n+1}\setminus C$ attaching it along the boundary of $C$. Pachner proved that each pair of PL-homeomorphic finite simplicial
complexes can be transformed one into the other by means of a finite sequence of bistellar transformations, so this distance is always finite.

In \cite{nabutovsky4} it was proven that for each $n>4$ and each computable function $f:{\bf N}\longrightarrow {\bf N}$ there exists $c(n)>1$ such that for each closed $n$-dimensional manifold $M^n$ and each sufficiently large $N$ there exist at least $c(n)^N$
distinct triangulations of $M$ with $\leq N$ simplices such that all pairwise distances in $T(M^n)$ between these triangulations are greater than $f(N)$.
This result was shown to be also true for some closed $4$-dimensional manifolds, namely those that can be represented as a connected sum of any closed
$PL$-manifold and a certain number $k$ of copies of $S^2\times S^2$, where according to \cite{stan1} one can take $k=14$. However, it is desirable to know
whether or not this result holds for all closed four-dimensional PL-manifolds including $S^4$. One motivation is a connection with Hartle-Hawking model
of Quantum Gravity as well as other related models of Quantum Gravity such as Euclidean Simplicial Gravity (cf. \cite{hh}, \cite{quantum}, \cite{nabutovsky5}). Another motivation
is a natural desire to know if $T_N(S^4)$ is as ``large" as $T_N(M)$ for more complicated $4$-manifolds or the lack of topology somehow makes
the spaces of triangulations smaller. 
The main result of our paper is that a slightly weaker version of this result holds for all closed $4$-dimensional manifolds as well as compact $4$-dimensional manifolds
with boundary. To state it define functions $\exp_m(x)$ by formulae $\exp_0(x)=x,\ \exp_{m+1}(x)=2^{\exp_m(x)}.$

\begin{thma}There exists $C>1$ such that for each compact $4$-dimensional manifold $M$ (with or without boundary) and each positive integer $m$ for all sufficiently large $N$ there exist 
more than $C^N$ triangulations of $M$ with less than $N$ simplices such that the distance between each pair of these triangulations is at least $\exp_m(N)$.
\end{thma}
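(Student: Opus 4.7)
The plan is to deduce Theorem A from Theorem B via the classical device of realizing balanced presentations of the trivial group as $2$-spines of $4$-manifolds. Theorem B supplies more than $C^N$ balanced presentations $P_1, \dots, P_{C^N}$ of the trivial group, each with $4$ generators and total length $\leq N$, pairwise at Tietze-distance $\geq \exp_m(N)$. To each $P_i$ I will attach a triangulation $T_i$ of $M$ with $O(N)$ simplices, arranged so that the Pachner distance between $T_i$ and $T_j$ provides an upper bound on the Tietze distance between $P_i$ and $P_j$.

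For the construction, first fix any triangulation $T_0$ of $M$ containing a small PL ball $B \subset M$ that meets only a bounded number of simplices. For each presentation $P$, I replace $T_0|_B$ with a triangulation of $B$ carrying, as a distinguished subcomplex, a $2$-spine encoding the presentation complex $K_P$. Because the modification is confined to $B$, the resulting complex $T_P$ remains a triangulation of $M$ regardless of $P$, so no stabilization of $M$ itself is required (this is where the ``absolute'' character of $M$, as opposed to the higher-dimensional results and to the earlier $S^2\times S^2$-stabilized results, is achieved). The total simplex count is $O(N)$ since $K_P$ has $O(N)$ cells and a regular neighborhood triangulates with only bounded-complexity overhead.

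The core of the argument is the converse Lipschitz estimate. From an arbitrary triangulation $T$ of $M$, one extracts a $2$-dimensional spine and reads off a presentation of the trivial group; the extraction is not canonical, but the framework of ``group-like objects'' with its own notion of morphism and isomorphism (alluded to in the introduction) makes it well-defined up to a fixed number of Tietze transformations. A single bistellar move touches only $O(1)$ simplices and hence changes the extracted presentation by $O(1)$ Tietze moves, giving
\[
d_{\mathrm{Tietze}}(P_i, P_j) \;\leq\; c \cdot d_{\mathrm{Pachner}}(T_i, T_j) + c',
\]
with $c, c'$ absolute constants. Combined with the lower bound $\exp_m(N)$ from Theorem B, this yields the stated lower bound on pairwise Pachner distances after a harmless adjustment of the base $C$.

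The principal obstacle is this extraction step. In dimension $4$ a PL manifold has no canonical $2$-spine, and since all the $P_i$ present the trivial group no algebraic invariant of $\pi_1$ can distinguish them; one therefore cannot import the higher-dimensional argument wholesale. The way around this is to work intrinsically with the combinatorics of the spine, treating balanced presentations as their own category with a suitable notion of isomorphism, and to verify that each elementary bistellar move in the ambient $4$-dimensional triangulation lifts to an elementary (or bounded-length) move on the associated presentation in this category. Once this bounded lifting is in hand, Theorem A reduces mechanically to Theorem B.
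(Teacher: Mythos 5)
Your high-level strategy — build triangulations indexed by the presentations from Theorem B/C and then prove a ``converse Lipschitz estimate'' asserting that one bistellar move changes an associated presentation by $O(1)$ Tietze moves — is indeed the backbone of the paper's argument. But your construction step is wrong in a way that breaks the entire deduction.

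You propose to start with a fixed triangulation $T_0$ of $M$, delete a small PL ball $B$, and retriangulate $B$ so that it contains the presentation $2$-complex $K_P$ as a distinguished subcomplex. This does produce a triangulation of $M$, but it encodes $P$ in no topologically recoverable way: $K_P$ is a contractible $2$-complex sitting inside a $4$-ball, so its inclusion is null-homotopic and leaves no trace on $\pi_1$, on homology, or on any apparent presentation of $\pi_1(T_P)$ read off the $1$- and $2$-skeleton of $T_P$. Concretely, the resulting $T_P$ and $T_{P'}$ for two different $P, P'$ of the same length would both be triangulations of $M$ with the same number of simplices whose $1$-skeleta present $\pi_1(M)$ in the usual way; nothing forces them to be far apart in the Pachner metric. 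Your ``extraction'' step — ``from an arbitrary triangulation $T$ of $M$, one extracts a $2$-dimensional spine and reads off a presentation of the trivial group'' — is the step that has no content here, precisely because a $4$-manifold has no canonical $2$-spine and a spine embedded in a ball is not homotopically detectable. The ``group-like objects'' framework from the introduction concerns comparing two given balanced presentations of the trivial group; it does not furnish a way to extract such a presentation from a triangulation.

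What the paper actually does is topologically essential and cannot be replaced by your ball-replacement idea. From each balanced presentation $P$ it builds a {\emph new} closed $4$-manifold by taking the connected sum of copies of $S^1\times S^3$ (one per generator), representing the relators by embedded simple closed curves, and doing surgery (attaching $D^2\times S^2$'s) to kill them. The balanced condition makes the result a homology $4$-sphere; triviality of the presented group plus Freedman makes it a topological $4$-sphere; and Andrews-Curtis triviality of these particular presentations makes it a PL $4$-sphere. Crucially, this manufactured triangulation has $P$ as an ``apparent presentation'' of its fundamental group in the sense of Definition 7.1 — read off from a spanning tree of the $1$-skeleton, well-defined up to a controlled number of Tietze moves. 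Then Lemma 7.2 gives your desired converse Lipschitz estimate: one bistellar move changes the apparent presentation (for a suitably related choice of spanning tree) by $O(1)$ Tietze transformations. The general-$M$ case is handled by forming the connected sum with a fixed triangulation of $M$ and invoking Theorem 5.7 to ensure that free-producting with $\pi_1(M)$ does not collapse the Tietze distances. Your remark that avoiding $S^2\times S^2$-stabilization is ``where the absolute character of $M$ is achieved'' also misidentifies the mechanism: the stabilization in prior work was needed because the surgery construction for a non-balanced presentation produces extra $H_2$; here it is the balanced + Andrews-Curtis condition, not confinement to a ball, that removes it.

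One further gap, independent of the above: the paper does not deduce Theorem A directly from Theorem B but from Theorem 6.5 (the superexponential version with short relators, bounded degree, and $O(\ln N)$ graph diameter). These combinatorial features are what allow the constructed triangulations to be carried out in $O(N\ln N)$ simplices via the graph-filling Theorem 6.6; applying the surgery construction directly to a Theorem-B presentation with long relators gives no control on the simplex count. Your $O(N)$ claim for the simplex count would need an argument of that kind.
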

Note that this result follows from its particular case when $M=S^4$. Indeed, if one has $C^N$ distant triangulations of $S^4$, once can form the connected
sums of all these triangulations with {\it a fixed} triangulation of $M$.
Also, note that the same construction (exactly as in higher dimensions) implies
the following Riemannian analog of Theorem A: 

\begin{thma1} There exist positive constants $C>1$, $c_0$, $c_1$, $c_2$, $const$ such that for each closed $4$-dimensional manifold $M$
and each non-negative integer $m$ for each
sufficiently large $x$ there exist more than $\exp(x^{c_0}) $ Riemannian metrics on $M$ such that 1) each of those metrics has sectional curvature between $-1$ and $1$,
injectivity radius greater than $c_1$, volume greater than $c_2$ but less than $x$, and diameter less than $const\ln x$;
2) For each positive $c_3$ there is no sequence of jumps of length $\leq c_3\exp_{m+1}(-c_2\ln x)$
in the Gromov-Hausdorff metric that connects a pair of these metrics
within the space of Riemannian metrics on $M$ with sectional curvature between $-1$ and $1$, volume $>c_3$ and diameter $\leq \exp_m(const \ln x)$.
(If $m=0$, the upper bound for the length of jumps becomes $c_3x^{-c_2}$ and the upper bound
for the diameter is the same $const\ln x$ as in the condition 1.)

\end{thma1}
If the Euler characteristic of $M$ is non-zero and its sectional curvature satisfies $\vert K\vert\leq 1$, 
then the Gauss-Bonnet theorem implies that $vol(M)>const>0$.
Therefore, Theorem A.1 implies the existence of exponentially many connected components of the sublevel set of the diameter
functional on the space of isometry classes of Riemannian metrics on $M$ with $\vert K\vert \leq 1$. 
Moreover, each pair of these components can merge only in a connected
component of a sublevel set of diameter only for a much larger value of $x$.
A natural idea will be to look for minima of diameter on connected components of its sublevel sets, because the minimum of any
continuous functional on a connected component of its sublevel set will be automatically a local minimum of this functional.
In order to ensure the existence of the minimum of diameter on all connected components of its sublevel set we are going first
to somewhat enlarge the considered space:
Denote the closure of the space of Riemannian structures (i.e. isometry classes of Riemannian metrics)
on $M$ with $\vert K\vert\leq 1$ in the Gromov-Hausdorff topology by
$Al_1(M)$. Elements of $Al_1(M)$ are Alexandrov spaces with curvature bounded from both sides.
They are isometry classes of $C^{1,\alpha}$-smooth metrics on $M$, and the sectional curvature
can be defined a.e. Consider
the diameter as a functional on $Al_1(M)$. Its minima in connected components
of sublevel sets are also local minima on the whole space. Therefore,
Theorem A.1 implies that the diameter has ``many" local minima on 
$Al_1(M)$, and at least some of these minima must be very deep. 
The number of these minima is at least exponential in a positive power of $x$,  and $x$ behaves as the exponential
of our upper bound for the diameter. Therefore, denoting our upper bound for the diameter by $y$, and observing that $x>\exp ({y\over const})$,
we see that Theorem A.1 implies the following theorem:

\begin{thma2} There exists a positive constant $c$  such that for each closed $4$-dimensional Riemannian manifold
$M$ with non-zero Euler characteristic and each positive
integer number $m$ the diameter regarded as a functional on $Al_1(M)$ has infinitely
many distinct local minima $\mu_i$ such that 1) the sequence $diam(\mu_i)$ is an unbounded increasing
sequence; 2) the number of $i$ such that $diam(\mu_i)\leq y$ is greater than $\exp(\exp(cy))$;
3) Each path or a sequence of sufficiently short jumps in $Al_1(M)$ that starts at $\mu_i$ and ends
at a point with a smaller value of the diameter must pass through a point where the value of
the diameter is greater than $\exp_m(y)$.
  
\end{thma2}

For $n\geq 5$ the existence part of these results first appeared in \cite{nabutovsky3} (Theorems 9, 11). Later \cite{nabwein1} (see also \cite{nabwein3})
the same and even stronger results were proven without the assumption that the Euler characteristic of $M^n$
does not vanish. (More importantly, the techniques of \cite{nabwein1} can be applied to other Riemannian
functionals, for example, to diameter regarded as a functional on the space $al_1(M^n)$ of
Alexandrov structures on $M^n$ with curvature $\geq -1$.)
The depths of local minima grow not only faster than any finite tower of exponentials but faster than any computable function.
Also, it was proven in \cite{nabwein1} that the
distribution function for ``deep'' local minima of $diam$ on $Al_1(M^n)$ grows at least exponentially.
Shmuel Weinberger observed that the distribution function for the ``deep" local minima of diameter is, in fact,
doubly exponential (\cite{weinberger}, Theorem 1 on p. 128). For $n=4$ the corresponding existence results were proven in \cite{ln}.

Note that from Theorem A.1 almost immediately follows the following result with a somewhat nicer statement:

\begin{thma11}
There exist positive constants $c_0, c_1, const$ and $C>1$ such that for each $m$ and each closed $4$-dimensional Riemannian manifold $M$ with non-zero Euler characteristic
for all sufficiently large $x$ there exists at least $\exp(x^{c_0})$ Riemannian metrics $g_i$ on $M$ such that Riemannian manifolds $M_i=(M, g_i)$
have volume $\leq x$, injectivity radius greater than $c_1$ and diameter less than $const\ \ln x$ with the following
property: Let $i\not =j$, and $f:M\longrightarrow M$ be any diffeomorphism. If we consider it as a map
$f_{ij}$ between Riemannian manifolds $M_i$ and $M_j$, then either $\sup_{x\in M_i}\vert Df_{ij}(x)\vert$ or
$\sup_{x\in M_j}\vert Df^{-1}_{ij}(x)\vert$ will be greater than $\exp_m(x)$.
\end{thma11}

To see that Theorem A.1 follows from Theorem A.1.1, we can use the same Riemannian metrics on $M$ for both theorems.
A well-known fact which is a part of all proofs of the Gromov-Cheeger compactness theorem is that  two sufficiently
Gromov-Hausdorff close Riemannian manifolds satisfying the conditions of Theorem A.1 (or Theorem A.1.1) are diffeomorphic.
Moreover, the proofs yield concrete upper bounds for the the Lipschitz constants. Assuming that the condition 2) of Theorem A.1
does not hold we can multiply the upper bounds for the  Lipschitz constants for the diffeomorphisms
corresponding to small jumps in the condition 2 and obtain "controlled" upper bounds for the composite diffeomorphisms, which contradicts Theorem A.1.1.

\par
Theorems A, A.1, A.1.1 follow (see the last section) from the following theorem about balanced presentations. The {\it length} of a finite presentation of a group is defined 
as the sum of the lengths of all relators plus the number of generators. 

\begin{thmb} There exists a constant $C>1$ such that for each $m$ and all sufficiently large $N$ there exist more than $C^N$ balanced presentations of 
the trivial group of length $\leq N$ with four generators and four relators such that for each pair of these presentations
one requires more than $exp_m(N)$ Tietze transformation in order to transform one of these presentations into the other.

\end{thmb}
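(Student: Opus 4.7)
The plan is to combine the constructions of balanced, deep presentations of the trivial group in \cite{lishak} and \cite{bridson3} with an encoding device, in the spirit of \cite{collins}, that attaches $\Omega(N)$ bits of rigid combinatorial data to a single host presentation. Concretely, I would first adapt the \cite{lishak} and \cite{bridson3} constructions to produce a base balanced presentation $Q$ of the trivial group with exactly four generators and four relators, of length $\Theta(N)$, whose Tietze distance from the trivial presentation is at least $\exp_{m+1}(N)$. Then, for each $s \in \{0,1\}^{cN}$ (with $c$ a small positive constant), I would splice into $Q$ a word $w(s)$ of length $\Theta(N)$ encoding $s$, for example as a conjugator in an HNN-style relator or as a commutator pattern inside an existing relator, in such a way that balancedness, triviality, the four-generator / four-relator shape, and the length bound $\leq N$ are all preserved, and the resulting presentation $P_s$ has $w(s)$ embedded in a rigid position within the combinatorial structure of $Q$.

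The main obstacle is pairwise separation: all $P_s$ present the trivial group, so no invariant of the presented group can distinguish them, and the separation must be extracted from a finer, presentation-level invariant. Following the framework announced in the introduction, I would introduce a category whose objects are balanced presentations of the trivial group and whose morphisms $\varphi : P \to P'$ are assignments of each generator of $P$ to a word in the generators of $P'$ which realize each relator of $P$ as a consequence of the relators of $P'$ via a van Kampen diagram of controlled area. A Tietze sequence of length $T$ between $P$ and $P'$ produces morphisms in both directions whose complexity is bounded by an explicit function of $T$, so any non-trivial lower bound on the complexity of morphisms $P_s \to P_{s'}$ in this refined sense translates into a lower bound on Tietze distance.

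I would then read off the parameter $s$ from $P_s$ by a computable procedure, defining an invariant $I(P) \in \{0,1\}^{cN}$ with the property that any morphism $P_s \to P_{s'}$ of complexity below $\exp_m(N)$ forces $I(P_s) = I(P_{s'})$, and hence $s = s'$. The quantitative input making this work is the depth lower bound on the host $Q$: any van Kampen diagram altering the cassette region carrying $s$ must pass through a diagram for a word witnessing triviality inside $Q$, whose area is at least $\exp_{m+1}(N)$ by the bounds of \cite{lishak} and \cite{bridson3}. The hardest step, I expect, will be to define $I$ rigorously and to show that short Tietze sequences cannot both preserve the four-relator shape and alter the cassette; this will require a monotonicity argument for a suitable diagrammatic area invariant on intermediate presentations. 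Once established, this yields more than $C^N = 2^{cN}$ balanced presentations of the trivial group of length $\leq N$ that are pairwise Tietze-distance $> \exp_m(N)$ apart, proving Theorem B.
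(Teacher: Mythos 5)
Your proposal follows the same high-level strategy as the paper: build $2^{\Omega(N)}$ balanced four-generator presentations by varying a conjugating word $v$ inside a fake HNN-extension of the Baumslag--Gersten group, introduce a notion of controlled (effective) homomorphism, observe that a short Tietze sequence yields such a morphism in both directions, and argue that low-complexity morphisms must preserve the encoded word. This is essentially the plan carried out in Sections 2--5. However, the decisive step --- showing that a low-complexity effective isomorphism between two such presentations forces the encoded words to coincide --- is left at the level of a sketch, and the mechanism you propose for it is not the one that works.

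Two concrete gaps. First, you omit the doubling structure: the paper's $\mu_v$ is not a single fake HNN-extension $H_v$ but an amalgamated free product of two copies $H_v *_{s=\hat x,\ x=\hat s} \hat H_v$. This is essential, not cosmetic: a single $H_v$ presents $\mathbb{Z}$, not the trivial group, and more importantly the amalgamation over $\langle x,s\rangle$ is precisely what gives enough rigidity (via the alternating-regions structure of van Kampen diagrams over amalgamated products) to pin down, one generator at a time, the image of each generator under an effective homomorphism. Second, your proposed separation mechanism --- an invariant $I(P)\in\{0,1\}^{cN}$ readable from an arbitrary presentation, protected by a ``monotonicity argument for a suitable diagrammatic area invariant on intermediate presentations'' --- is not what the paper does and would be very hard to implement, since nothing is extracted from arbitrary intermediate presentations. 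Instead the paper takes an $(L,N,M)$-effective homomorphism $F\colon[\mu_{v_1}]\to[\mu_{v_2}]$ directly and, using the iterated HNN/amalgamated structure together with effective small cancellation estimates for the Baumslag--Gersten group (Section 3) and an effectivized Collins/Brunner analysis of automorphisms, normalizes $F$ through a chain of lemmas until $F(x)=x$, $F(y)=y$, $F(t)=t$, $F(s)=hsk$; at that point the last relator forces $k^{-1}(v_1w_n^{-1}v_1^{-1}w_n)k = v_2w_n^{-1}v_2^{-1}w_n$ with small area, which the effective small cancellation lemma rules out unless $v_1=v_2$. This step-by-step normalization, together with the quantitative small-cancellation input it rests on, is the real content of the proof; without it your plan does not close.
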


If one does not restrict the number of generators and relators, then there is  a superexponentially growing number of
pairwise distant balanced presentations of the trivial group:

\begin{thmc}

There exists a constant $c>0$ such that for each $m$ and all sufficiently large $N$ there exist more than $N^{cN}$ balanced presentations of 
the trivial group of length $\leq N$ such that for each pair of these presentations
one requires more than $exp_m(N)$ Tietze transformation in order to transform one of these presentations into the other. Moreover, these balanced
presentations can be chosen so that the length of each relation is equal to $2$ or $3$,
and each generator appears in at most $3$ relations.
\end{thmc}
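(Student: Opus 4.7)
\medskip

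\noindent\textbf{Proof plan.} The strategy is to bootstrap Theorem~B, which already furnishes $C^N$ pairwise $\exp_m(N)$-far balanced trivial presentations on four generators, by dropping the fixed-generator restriction and encoding combinatorial data in the presentation itself. A presentation with $n$ generators, relators of length $2$ or $3$, and degree at most $3$ per generator has total length $O(n)$, and the raw number of such labelled bounded-degree hypergraphs on $n$ vertices is of order $n^{cn}$. Choosing $n$ proportional to $N$ thus produces a candidate pool of at least $N^{c'N}$ shape-restricted presentations, and the task is to realise enough of them as presentations of the trivial group that remain pairwise far apart in the Tietze metric.

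For the construction, I would fix a balanced presentation $P_0$ from the proof of Theorem~B whose core resists short Tietze rearrangements. To each label $f$ in a set $\mathcal{L}$ of cardinality $\geq N^{cN}$, for instance $\mathcal{L}=\{f\colon[n]\to[n]\}$ or labelled trees on $[n]$, I attach a \emph{pointer gadget}: $O(n)$ new generators $e_1,\dots,e_n$ together with length-$2$ and length-$3$ relations that spell out $f(i)$ as a short word in the $e_j$'s via auxiliary identifications. The standard trick of breaking a long relation into relations of length $\leq 3$ by introducing intermediate generators lets one realise an arbitrary pointer at constant length overhead while enforcing the degree bound. The gadget must be balanced and must trivialise the subgroup it introduces, and the core $P_0$ must be engineered so that the gluing preserves both triviality of the whole group and the degree constraint globally; this is where the plumbing constructions of \cite{lishak} and \cite{bridson3} are adapted.

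The heart of the proof is the distance lower bound. Following the novelty emphasised in the introduction, I would attach to every balanced presentation $P$ a group-like object $\mathcal{G}(P)$ with its own notion of isomorphism, and establish (a) that any Tietze sequence of length $\leq\exp_m(N)$ only changes $\mathcal{G}(P)$ within its isomorphism class, and (b) that distinct pointer labels $f\neq g$ give non-isomorphic $\mathcal{G}(P_f)$ and $\mathcal{G}(P_g)$. Part~(a) is a refinement along the lines of Collins \cite{collins} of the Tietze bounds used in \cite{lishak} and \cite{bridson3}. Part~(b) is the main obstacle: whereas Theorem~B only needs to distinguish $C^N$ isomorphism classes, potentially by a single coarse numerical invariant, here one must separate $N^{cN}$ of them, i.e.\ recover $\Theta(N\log N)$ bits of combinatorial information from $P_f$ modulo the isomorphism of group-like objects, even though the underlying abstract group is trivial in every case. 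The newly introduced homomorphism/isomorphism framework for group-like objects is exactly what lets one certify this fine-grained non-isomorphism, and the crux of the proof is its non-isomorphism theorem at the super-exponential resolution required for $N^{cN}$ labels.
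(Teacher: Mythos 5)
The paper does not prove Theorem~C by the route you describe. The actual argument is a compression argument: apply Theorem~B with the parameter $\lfloor N\ln N\rfloor$ in place of $N$, obtaining roughly $1.18^{N\ln N}=N^{cN}$ balanced presentations on four generators of length $\leq N\ln N$ that are pairwise $\exp_m(N\ln N)$-far in the Tietze metric; then ``effectively rewrite'' each of them as a presentation of length $O(N)$. The key compression step (Lemma~6.4) introduces polynomially many new generators as one-letter abbreviations for all short words of length $\sim\epsilon\log N$ in the original alphabet, which shrinks a relator of length $N\ln N$ to a relator of length $O(N)$ in the abbreviated alphabet; a further rewriting (Lemma~6.1) splits long relators into chains of length-$2$ and length-$3$ relators and introduces copies of generators to bound each generator's degree by $3$, while preserving balancedness. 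Crucially, both rewritings are carried out by only $O(l(P))$ Tietze moves (Lemma~6.1, property~5), so the $\exp_m(N\ln N)$ pairwise lower bound from Theorem~B descends, up to a negligible additive $O(N\ln N)$, to the compressed presentations; the super-exponential count comes entirely from feeding a larger parameter into the already-proved exponential bound of Theorem~B, not from any new separating argument.

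Your proposal, by contrast, fixes a single core $P_0$ from the proof of Theorem~B and tries to manufacture $N^{cN}$ variants by attaching combinatorial ``pointer gadgets.'' This is a genuinely different plan, and it has a hole that you yourself flag as ``the crux'': you would need a fresh non-isomorphism theorem separating $N^{cN}$ group-like objects, i.e.\ recovering $\Theta(N\log N)$ bits of the label from $P_f$ through the fog of $\exp_m(N)$-length Tietze sequences. No such theorem is stated, constructed, or reduced to existing lemmas; it is at least as hard as the entire technical core of the paper (Sections~3--4), which only needed to separate $C^N$ objects. The gadget construction is also underspecified in a way that matters: if the gadget is a balanced trivial presentation glued as a free factor, differing labels can be Tietze-close regardless of $P_0$ (short sequences can rearrange an ``obviously trivial'' free factor cheaply); if instead it is entangled with $P_0$, you must prove that the entanglement is rigid against $\exp_m(N)$ Tietze moves, and nothing in your sketch does so. Finally, counting ``labelled bounded-degree hypergraphs'' overcounts badly: most such shapes do not present the trivial group and the balanced ones that do need not be pairwise far. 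The paper's trick --- super-exponential count for free via $N\mapsto N\ln N$ plus Tietze-cheap compression --- is precisely the idea missing from your approach.
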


If we take one of the balanced presentations in Theorems B or C, and consider its presentation complex, i.e. the $2$-complex with one $0$-cell,  $1$-cells corresponding to the generators of the presentation, and $2$-cells corresponding to its relations, then we are going to obtain
a family of contractible $2$-complexes. In the situation of Theorem B these complexes will have $4$ $1$-cells and $4$ $2$-cells attached along
words of length $<N$; in the situation of Theorem C the number of $1$-cells or $2$-cells will be $<N$, and each $2$-cell will be either a digon
or a triangle. In the situation of Theorem B the number of these $2$-complexes is at least $C^N$, in the situation of Theorem C $N^{cN}$. 
We can endow each of these complexes by a path metric obtained by considering each $1$-cell as a circle of length $1$, and each $2$-cell
as the $2$-disc with the length of circumference equal to the length of the corresponding relator.
Also, we can subdivide these complexes into simplicial complexes with $O(N)$ simplices by subdividing each $2$-cell into
triangles in an obvious way.
Now given a pair of such contractible $2$-complexes $K_1, K_2$ we can ask for specific Lipschitz maps (=homotopy equivalences )
$f:K_1\longrightarrow K_2$, $g:K_2\longrightarrow K_1$ and Lipschitz homotopies $H_1$ between $f\circ g$ and the identity map of 
$K_2$, and $H_2$ between $g\circ f$ and the identity map of $K_1$ such that the maximum of Lipschitz constants of $f,g, H_1, H_2$
is minimal possible.
Theorems B and C imply that for all sufficiently large $N$ 
the Lipschitz constant of either $f$, $g$ or at least one of the two homotopies must be greater than $\exp_m(N)$. (On the other hand $f$ and $g$ can be chosen
as constant maps, so they do not need to have large Lipschitz constants.) 
The proof of this fact will not be presented here and is similar to the proofs of Theorems A and A.1 given below.

Alternatively, we can take the contractible simplicial $2$-complexes $K_i$ constructed by triangulating presentation
complexes of presentations from Theorem C, consider all possible subdivisions $K_1'$ of $K_1$, $K_2'$ of $K_2$,
simplicial maps $f:K'_1\longrightarrow K_2,g:K_2'\longrightarrow K_1$, define $F_1=g\circ f$ and $F_2=f\circ g$,
and, finally, consider all homotopies 
$H_i$ between $F_i$ and $id_i:K_i\longrightarrow K_i$ that are simplicial maps defined on some simplicial subdivisions of $K_i'\times [0,1]$ with values in $K_i$  for $i=1,2$. 
(Observe, that, in general,  $F_i$ are not simplicial maps from $K_i'$ to $K_i$, $i=1,2$. However,
we can subdivide each simplex $s$ of $K_1'$ into simplices $f^{-1}(d)$, where $d$ runs over all simplices of $K_2'$ in
$f(s)$ (and, if $dim\ s=2$ and $dim\ f(s)=1$, then we might need to further subdivide quadrilaterals $f^{-1}(d)$ into pairs of triangles).
After subdividing $K_2'$ in a similar way,  we will be able to regard $F_i$ as simplicial maps of
the constructed subdivisions $K_i''$ of $K_i'$ into $K_i$. Now the relative simplicial approximation theorem would
imply the existence of some subdivisions $\bar K_i$ of $K_i''\times [0,1]$ and simplicial homotopies $H_i:\bar K_i\longrightarrow K_i$ between $F_i$ and $id_i$.)

Now we can define the complexity of the
quadruple of maps $f, g, H_1, H_2$ as the total number of $3$-simplices in the subdivisions $\bar K_i$ of $K_i'\times [0,1], i=1,2$, used to define $H_1, H_2$. Define the {\it witness complexity
of homotopy equivalence of $K_1, K_2$} as the minimum of the complexity over all such quadruples $f,g, H_1, H_2$ of simplicial maps.
Theorem C easily implies that:

\begin{thmc1}
For some $c>1$ and each $m$ for all sufficiently large $N$ there exist more than $N^{cN}$
contractible $2$-dimensional simplicial complexes $K_l$ with at most $N$ $2$-simplices such that for each pair $K_j, K_k$, ($j\not =k$), the witness
complexity of homotopy equivalence of $K_j$ and $K_k$ is at least $\exp_m(N)$.

\end{thmc1}

Theorem C can be straightforwardly obtained from Theorem B for $\lfloor N\ln N\rfloor$ just by
 rewriting the finite presentations in an appropriate way. Theorem C.1 will be proven in the last section, and easily follows
 from our proof of Theorem C.
Theorem A will be deduced from a modified version of Theorem C explained
in section 6. One applies this theorem for $N_1$ such that $N$
in Theorem A satisfies $N=\lfloor N_1\ln N_1\rfloor$.
We realize the balanced finite presentations of the trivial group from Theorem C as ``apparent" finite presentations
of PL $4$-spheres triangulated with $O(N)$ simplices. More precisely,
we start from the connected sum of several copies of $S^1\times S^3$ (one copy for each generator) , realize relators by embedded circles and kill
them by surgeries. Then we demonstrate that the resulting manifold can be triangulated in $O(N)$ simplices. 
The ``balanced" condition ensures that the resulting manifold will be a homology sphere. The triviality of the group implies that it
is a homotopy sphere and, thus, by the celebrated theorem of M. Freedman homeomorphic to $S^4$. But all our presentations obviously satisfy the Andrews-Curtis
conjecture and, therefore, the resulting manifolds will be PL- (or smooth) spheres. 

Of course, this construction can be also performed in all dimensions greater than four.
However, for higher dimensions one has a much larger stock of suitable finite presentations of the trivial group (see \cite{nabutovsky4}): 
One can start from a sequence of finite presentations used in the proof of S. Novikov theorem asserting the non-existence of the algorithm recognizing
$S^n$ for each $n>4$. There is no algorithm deciding which of these presentations are presentations of the trivial group, which means that the number
of Tietze transformations required to transform presentations of the trivial group that appear in this sequence to the trivial presentation is not bounded by any computable
function. On the other hand, all these presentations are presentations of groups with ``obviously" trivial first and second homology groups. As a result,
one can use the Kervaire construction (\cite{kervaire2}) to realize them as ``apparent" finite presentations of $n$-dimensional homology spheres which are diffeomorphic
to $S^n$ (as the groups are trivial). However, there is no easy way to see that the homology spheres are diffeomorphic to $S^n$, as otherwise we would be able to see that seed finite presentations are
finite presentations of the trivial group. The starting sequence of finite presentations codes a halting problem for a fixed Turing machine, and it was
noticed in \cite{nabutovsky4} that one can use the concept of time-bounded Kolmogorov complexity and a theorem proven by Barzdin to choose this Turing machine so that
it is possible to ensure 
the existence of not only some triangulations of $S^n$ far from the standard triangulation but the existence of an exponentially growing number of such triangulations (as in Theorem A).

The four-dimensional situation is different from the higher-dimensional case.
J.P. Hausmann and S. Weinberger proved that the vanishing of the first two homology groups of a finitely presented
group is no longer sufficient to realize this group as the fundamental group of a $4$-dimensional homology sphere (\cite{hauswein}).
(It was the main result of \cite{kervaire2} that for each $n>4$ $H_1(G)=H_2(G)=\{0\}$ is the necessary and sufficient condition for the existence of a smooth $n$-dimensional
homology sphere with fundamental group $G$.) The only known general sufficient condition of realizability of a group given by a finite presentation
as the fundamental group of a $4$-dimensional homology sphere is that this finite presentation is balanced. The condition that the number of relators
is equal to the number of generators is a very strong condition that seemingly precludes coding of Turing machines in such finite presentations. In fact, it
is a famous unsolved problem whether or not there is an algorithm that decides if a group given by a balanced presentation is trivial.
However, the example of the Baumslag-Gersten group $B=\langle x,t\vert x^{x^t}=x^2 \rangle$ that has Dehn function growing faster than a tower of exponentials of
height $[\log_2 n]-const$ suggests an idea of adding the second relation $w_n=t$, where $w_n$ runs over words representing the trivial element
in $B$ that have very large areas of their van Kampen diagrams. (Before going further
recall that $a^b$ means $b^{-1}ab$, and observe that $B$ can also be written as 
$\langle x,y,t\vert x^y=x^2; x^t=y\rangle$, which shows that it can be obtained from the infinite cyclic group by performing two HNN-extensions with
associated subgroups isomorphic to $\mathbb{Z}$.)
As a result, we obtain balanced presentations of the trivial group with two generators, such that a very large number
of Tietze transformations is required to transform these presentations to the trivial presentation provided that
one proceeds in the most obvious way (namely,  first ``proving" that $w_n=e$ using only the first relation, then concluding that $t=e$ from the second
relation.)
Thus, it is natural to conjecture that these balanced presentations of the trivial group
will be very far from the trivial presentation and, therefore, can be used to obtain triangulations of $S^4$ and other $4$-manifolds that are very far from the
standard ones. The second author came with this idea in 1992 and discussed it during 90's with many colleagues but was never able to prove this conjecture.
(The conjecture was mentioned  in \cite{nabutovsky5}, that first appeared as 2001 IHES preprint IHES/M/01/35.) 
The difficulty is that Gersten's proof of the fact that $B$ has a very rapidly growing Dehn function uses the fact that $B$ is non-trivial. In particular,
Gersten considers the universal covering of the presentation complex of $B$, and this cannot be done for presentations of the trivial group. On the other hand,
it seems that it is difficult to apply methods based on considerations of van Kampen diagrams when one needs to deal with two relators rather than one.

This problem was solved by the first author in \cite{lishak}, \cite{lishak3}.
The main new idea was use a modified version of the small cancellation theory over HNN-extensions.
(It is easy to see that $B$ can be obtained from $\mathbb{Z}$ by performing two HNN-extensions.) In \cite{lishak} he used ``long"
words $w_n$ of very special form; recently he improved his approach so that it yields the conjecture for ``most" words $w_n$ representing the trivial
element including the most natural ones (\cite{lishak3}). To get a flavour of the main idea of \cite{lishak} consider pseudogroups associated with finite presentations of a group where a word $w$ is regarded as trivial only if it can be represented as a product of not more than $f(\vert w\vert)$ conjugates of relators, where $f$ is not too rapidly growing function of the length of $w$. (This concept is similar to the notion of effective universal coverings introduced \cite{nabutovsky7}). If we consider such ``effective" pseudogroups produced from $B$ , the word $w_n$
will not represent the trivial element in a pseudogroup. Moreover, we can hope that for an appropriate choice
of $w_n$ ``effective" pseudogroups associated with $B$ with added relation $w_n=t$ satisfy a small cancellation condition and, therefore, are non-trivial
as effective pseudogroups. This implies that corresponding balanced presentations of the trivial group are very far from the trivial presentation.

Unbeknown to the authors the problem of construction of balanced presentations of the trivial group that are far from the trivial presentation was also of
interest to Martin Bridson (who also was not aware of the interest of the second author to this problem). His preprint \cite{bridson3} with another solution
appeared on arxiv two weeks after \cite{lishak} but, as we later learned, Bridson announced his solution in a series of talks in 2004-2006 and mentioned it
in his ICM-2006 talk \cite{bridson2}. We are going to describe his main ideas from our viewpoint involving the concept
of effective pseudogroups. He also starts from $B$. First, he considers a ``fake" HNN-extension $\tilde B$ of $B$ with stable letter $s$ and new relation
$sw_ns^{-1}=t$. As $w_n=e$ the resulting group is isomorphic to $\mathbb{Z}$, but in the realm of effective pseudogroups $w_n$ is non-trivial, so from
some intuitive viewpoint this can be regarded as an HNN-extension of effective pseudogroups.
Then he  takes two copies $\tilde B_1$, $\tilde B_2$ of $\tilde B$ and considers the amalgamated free product of effective pseudogroups
$\langle \tilde B_1*\tilde B_2\vert s_1=x_2, x_1=s_2 \rangle$. Obviously, these presentations are the presentations of the trivial group but in the realm
of effective pseudogroups this will be non-trivial pseudogroups. (Again, this is our interpretation of Bridson's examples, not his. 
Also, at the moment there is no theory of effective pseudogroups. Yet this point of view suggests a possible simplification
of Bridson's construction, namely, ``fake" amalgamated products of two copies of $B$ $\langle B_1*B_2\vert t_1=w_{n2}, w_{n1}=t_2\rangle$.)

The main idea of the proof of Theorem B (and, thus, all other results of the present paper) is the following. For each $n$ we produce
exponentially many versions of the fake HNN-extension $\tilde B$ of $B$. These versions involve variable words $v$ of length not exceeding $const\ n$, and words $w_k$, $k<<n$, satisfying an ``effective'' small cancellation condition as in \cite{lishak}, which gives even more control over $\tilde B$.
As the result, the finite presentations obtained from two copies of $\tilde B$ as in \cite{bridson3} for two different $v$
will represent effective pseudo-groups that are not effectively isomorphic. This implies that they cannot be transformed one into the other by
a ``small" number of Tietze transformations. Thus, our construction is a hybrid of constructions in \cite{lishak} and \cite{bridson3}.

At the moment we do not have any infrastructure for a theory of effective pseudogroups (but hope to develop it in a subsequent
paper). Therefore, the arguments involving the concept of effective pseudogroup should be regarded only as intuitive ideas useful
for understanding of our actual proofs
(that do not involve the concept of effective pseudogroups).

\par
To give a flavour of our further arguments consider $\langle B_1*B_2\vert t_1=w_{k2}, w_{k1}=vt_2v^{-1}\rangle$, where this time $w_{k1}$, $w_{k2}$
denote two copies of a fixed word $w_k$ representing a non-trivial element of infinite order in
the Baumslag-Gersten group $B$, and $v$ is a variable word in $y, yx$. 
It is natural to conjecture that two such groups defined for different $v$'s are not isomorphic. To prove this fact one can investigate what can
happen with the generators of $B_1$ under a possible isomorphism. One will be using the fact that here we are dealing with the amalgamated free products
of two copies of $B$, as well as the description of the outer automorphism group of $B$ found by A. Brunner in \cite{brunner}. Brunner proved that
$Out(B)$ is isomorphic to the additive group of dyadic rationals. (A self-contained exposition of this result intended for geometers
can be found in [Lisc].) In the present paper we will need to establish an effective version of Brunner's theorem and, then, a non-existence
of an ``effective isomorphism" between ``effective pseudogroups" $\mu_{v_1}$ and $\mu_{v_2}$, which is an informal way of saying that
$\mu_{v_1}$ and $\mu_{v_2}$ cannot be transformed one into the other by a ``short" sequence of Tietze transformations.
\par
Finally, note that it seems plausible that one can construct the desired presentations as $\langle B\vert w_k=vtv^{-1}\rangle$, where $v$ runs over
an exponentially large set of words in $y, yx$, but at the moment methods of \cite{lishak} seem insufficient to prove that
these presentations are exponentially far from each other. But, if true, this would reduce the smallest possible 
number of generators in Theorem B from $4$ to $2$. 
\par
In the next section we describe the construction of $\mu_v$. In the third section we prove some quantitative results about Baumslag-Gersten group, necessary for the fourth section, where we essentially prove that any two different presentations from our construction are not isomorphic as effective pseudogroups. In the fifth section we prove Theorem B, in the sixth we prove Theorem C. In the last section we prove Theorems A, A.1, A.1.1.

\section{Notation and the Construction} \label{ch5:construction}

Denote by $exp_n(x)$ the tower of exponents of height $n$, i.e. $exp_n$ are recursively defined by $exp_0(x) = x$, $exp_{n+1} = 2^{exp_n(x)}$. Let $E_n = exp_n(1)$. As usual, $x^y$ denotes $y^{-1}xy$, where $x, y$ can be words or group elements. Let $l(w)$ be the length of the word $w$. If $w$ represents the identity element, denote by $Area_{\mu}(w)$ the minimal number of 2-cells in a van Kampen diagram over the presentation $\mu$ with boundary cycle labeled by $w$. For a presentation $\mu$ denote by $l(\mu)$ its total length, i.e. the sum of lengths of the relators plus the number of generators.

\par

 Let $G=\langle x,y,t | x^y = x^2, x^t = y \rangle$, $G$ is called the Baumslag-Gersten group, an HNN-extension of the group $K =\langle x,y,| x^y = x^2\rangle$ (called the Baumslag-Solitar group).

\par

We are going to use the same word of large area as in \cite{lishak}.

Let $w_n$ be defined inductively as follows. Let 
\begin{displaymath}
w_{n,0} = [y^{-E_n}xy^{E_n},x^{3}]  [y^{-E_n}xy^{E_n},x^{5}]  [y^{-E_n}xy^{E_n},x^{7}] .
\end{displaymath}
Here $[a,b]$ denotes $aba^{-1}b^{-1}$,  the commutator of $a$ and $b$
Suppose $w_{n,m}$ is defined, then let $w_{n,m+1}$ be the word obtained from $w_{n,m}$ by replacing subwords $y^{\pm E_{n-m}}$ with $t^{-1} y^{-E_{n-m-1}}x^{\pm 1}y^{E_{n-m-1}} t$. Finally, let $w_n = w_{n,n}$. 

\begin{rem} \label{ch5:chapter1sum}
We can make an estimate $l(w_n) \le 100 \cdot 2^n$. In the terminology of \cite{lishak}, $w_{n,1}$ is $E_n$-reduced, i.e. any non-circular $t$-band in a digram for $w_{n,1}$ is of length at least $E_n$. This in particular implies that the area of $w_n$ is at least $E_n$.
\end{rem}

Before defining the presentations we prove a lemma.
\begin {lem} \label{ch5:solitar}
Let $v, v'$ be two different words in the alphabet $\{y, yx\}$, then $v \neq v'$ in $G$.

\end{lem}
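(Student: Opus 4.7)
\medskip
\noindent\textbf{Proof proposal.} The plan is to reduce the statement to a faithful matrix representation of the Baumslag--Solitar group, where the action of $y$ and $yx$ turns positive words in $\{y,yx\}$ into binary expansions.

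First, I would observe that $G$ is an HNN-extension of $K=\langle x,y\mid x^y=x^2\rangle$ with stable letter $t$ and associated subgroups $\langle x\rangle,\langle y\rangle$, so by Britton's lemma the natural map $K\hookrightarrow G$ is injective. Since every word in the alphabet $\{y,yx\}$ lies in $K$, it suffices to show that distinct words $v,v'$ in this alphabet represent distinct elements of $K$.

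Next, I would use the standard faithful representation of $K\cong BS(1,2)$ in the affine group of $\mathbb{R}$, sending
\[
x\ \longmapsto\ (r\mapsto r+1),\qquad y\ \longmapsto\ (r\mapsto r/2).
\]
One checks the relation $y^{-1}xy=x^2$ directly, and faithfulness is classical. Under this representation,
\[
y\ \longmapsto\ (r\mapsto r/2),\qquad yx\ \longmapsto\ (r\mapsto (r+1)/2),
\]
so both act as affine maps with linear part $1/2$. For a positive word $v=a_1a_2\cdots a_n$ with $a_i\in\{y,yx\}$, set $\epsilon_i=0$ if $a_i=y$ and $\epsilon_i=1$ if $a_i=yx$. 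A short induction (applying the maps right-to-left) shows that $v$ acts as
\[
r\ \longmapsto\ \frac{r}{2^n}+\sum_{i=1}^{n}\frac{\epsilon_i}{2^{i}},
\]
i.e.\ a contraction by $2^{-n}$ followed by translation by the dyadic rational $0.\epsilon_1\epsilon_2\cdots\epsilon_n$ (binary expansion).

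Finally, if $v\neq v'$ are distinct words in $\{y,yx\}$, then either their lengths $n,n'$ differ, in which case their affine representatives have different linear parts $2^{-n}\neq 2^{-n'}$; or $n=n'$ but the sequences $(\epsilon_i)$ and $(\epsilon_i')$ differ in some position, so the corresponding binary expansions $0.\epsilon_1\cdots\epsilon_n$ and $0.\epsilon_1'\cdots\epsilon_n'$ are distinct dyadic rationals of the form $k/2^n$, giving different translation parts. Either way the affine transformations differ, hence $v\neq v'$ in $K$ and therefore in $G$. There is no real obstacle here; the only thing to keep straight is the right-to-left composition order that makes $v(0)$ come out to exactly the binary expansion $0.\epsilon_1\epsilon_2\cdots\epsilon_n$.
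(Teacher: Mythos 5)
Your proof is correct, and it arrives at the same combinatorial heart as the paper's argument (the binary expansion encoded by the pattern of $x$'s), but by a different route. The paper works directly inside $K=BS(1,2)$: it pushes the $x$'s to the right using $xy=yx^2$ to obtain the normal form $v=y^i x^j$ in $K$, where $j$ is exactly the integer with binary digits given by your $\epsilon_i$'s, and then invokes the HNN normal form theory for $K$ to conclude that $(i,j)$ determines the element. You instead map everything into the affine group of $\mathbb{R}$ via $x\mapsto r+1$, $y\mapsto r/2$, and read off the same dyadic data from the linear and translation parts. Each approach has a small advantage: the paper's normalization $v = y^i x^j$ shows explicitly which element of $K$ you get, which is handy elsewhere in the paper; your affine argument is slightly more self-contained in that you do not actually need the rep to be \emph{faithful} (you only need it to be a well-defined homomorphism, which is the one-line relation check), so you can drop the appeal to faithfulness entirely. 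One small caution in your write-up: be sure the composition convention and the direction of the relation agree --- with $x\mapsto r+1$, $y\mapsto r/2$ you need to compose right-to-left (apply the rightmost letter first), as you note, for $y^{-1}xy=x^2$ to come out right; your displayed formula for the action of $v$ is consistent with this.
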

\begin{proof}
One can see that by choosing $x$ as the coset representative for $x\langle x^2 \rangle$ and applying the theory of normal forms for an HNN-extension $K$. Alternatively, one can see that $v = y^i x^j$ in $K$, where $j$ can be represented in the binary notation as follows. The length of the number is equal to $i$. There is a digit $1$ for each $x$ in the word $v$, the digit is placed in the $n$-th position if there are $n-1$ letters $y$ to the right of that $x$. The rest of the digits are $0$. For example, $yxyyxyxy = y^5 x^j$, where $j=10110_2$. Clearly, this number is unique for a word $v$. To finish the proof we notice that if $v = y^i x^j = v' = y^m x^n$, then $i=m$ because $y$ is the stable letter of $K$, and therefore $j=n$. Finally, if $v \neq v'$ in $K$, then $v \neq v'$ in $G$, because $G$ is an HNN-extension of $K$.
\end{proof}

Let $v$ be an arbitrary word of length in $y$, $yx$ , where $y$, $yx$ appear only in non-negative degrees.
Let $H_v = \langle x,y,t,s  | x^y = x^2, x^t = y, s^{-1}v w_n^{-1} v^{-1} w_n s = t \rangle $, and define another copy of this presentation $\hat H_v  = \langle \hat x, \hat y,\hat t,\hat s  | \hat x^{\hat y} = \hat x^2, \hat x^{\hat t} = \hat y, \hat s^{-1}\hat v \hat w_n^{-1} \hat v^{-1} \hat w_n \hat s = \hat t \rangle $. Note, these are presentations of the infinite cyclic group. Let:

$$\mu_v = H_{v} \underset{s = \hat x, x = \hat s}{*} \hat H_{v}. $$

Replacing $s$ by $\hat x$, $\hat s$ by $x$, $y$ by $x^t$, $\hat y$ by $\hat x^{\hat t}$ we
can reduce the number of generators to $4$. Using the commutator notation the resulting presentations of the trivial group will look as follows:

$$\mu^0_v=\langle x, t, \hat x, \hat t | x^{x^t}=x^2, \hat x^{\hat x^{\hat t}}=\hat x^2, [v, w_n^{-1}]^{\hat x}=t, [\hat v, \hat w_n^{-1}]^x=\hat t\rangle.$$

Later we will use this construction for words $v$ of length $N$ and choose $n$ less than $\log_2 N -20$, and therefore the lengths of these presentations are $\sim N$. There are $2^N$ of them, and we will prove that they are very far from each other in the metric defined as the minimal number of Tietze transformations required to transform one presentation into the other. But for now we will treat the length of $\mu_v$ as a variable.

\begin{rem}
Results of \cite{bridson3} imply that, in particular, $Area_{\mu_v}(x) \geq Area_{G}(v w_n^{-1} v^{-1} w_n)$. We will require finer results of the same type (see the next section) to prove that not only $Area_{\mu_v}(x)$ is large, but also that for different words $v$ of the type defined in the previous lemma the presentations are ``far'' from each other. The exponential number of such words as a function of length will give us the exponential number of such presentations. In order to prove Theorem B we will be using words $v$ of length $\sim N$ and words $w_n$ with $n<<N$. Also, note that although given presentations
of $\mu_v$ have $8$ generators and relators one can use four relations in order to eliminate $4$ generators (namely, $y,s, \hat y, \hat s$) as above and rewrite these presentations as balanced presentations with $4$ generators.
\end{rem}

We introduce more notation. For $w \in \mu_v$ denote by $\hat l(w)$ the minimal $m$ such that $w=a_1...a_m$ (equality in the free group), where neighbouring $a_k$ are from different factors of $\mu_v$.
For $w \in H_{v}$ denote by $l_s(w)$ the number of letters $s, s^{-1}$ in $w$. Similarly, for $w \in \hat H_{v}$ denote by $l_ {\hat s}(w)$ the number of letters $\hat s, \hat s^{-1}$ in $w$, and for $w \in G$ denote by $l_t(w)$ the number of letters $t, t^{-1}$ in $w$.

\section{Quantitative Results about the Baumslag-Gersten Group} \label{ch5:lemmas}
As we mentioned before, this section contains technical lemmas similar in the spirit to Theorem B from \cite{bridson3}. But we will prove finer results using ``effective'' small cancellation theory from \cite{lishak}. We will use these lemmas to obtain results about ``effective'' homomorphism in the next section.
\par
Recall, that $K$ denotes the Baumslag-Solitar group, $G$ is the Baumslag-Gersten group, and $w_n$ were defined in the previous section.
\begin {lem} \label{ch5:s1}
Let $\tilde w$ be a non-empty word in $\{ a, a^{-1},b \}$ that does not contain more than $1$ letter $b$ consecutively. Let $w$ be a word obtained from $\tilde w$ by replacing $a$ with $w_n$, $a^{-1}$ with $w_n^{-1}$, and $b$ with an element $B$ of $K$, possibly different for any particular instance of $b$, satisfying the following conditions. If $b$ is between two letters $a$, then $x^{-7} B \neq y^i$, if $b$ is between $a$ and $a^{-1}$, then $x^{-7} B x^7 \neq y^i$, if $b$ is between $a^{-1}$ and $a$, then $B \neq y^i$, and if $b$ is between two letters $a^{-1}$, then $ B x^7 \neq y^i$ (not equal in $K$). Then if $w=1$ in G, $Area_G(w) \geq E_n$.
\end{lem}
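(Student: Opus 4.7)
My plan is to analyze any van Kampen diagram $D$ for $w$ over the standard two-relator presentation of $G$ via $t$-bands, i.e.\ corridors of cells glued along the HNN stable letter $t$. In a minimal diagram every $t$-band is non-annular, has both of its $t$-endpoints on $\partial D$, and its length $\ell$ equals its number of cells; the two long sides of the band read $x^{\pm\ell}$ and $y^{\pm\ell}$. Together with an arc of $\partial D$ between its two $t$-endpoints each side bounds a $K$-subdiagram, so that arc is equal in $K$ to a power of $x$ or $y$ of absolute value $\ell$. It therefore suffices to produce one non-circular $t$-band in $D$ of length $\geq E_n$, and I shall derive a contradiction from the opposite assumption.

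Organise the boundary $t$-letters of $w$ into levels $1,\dots,n$ according to the substitution depth at which they were introduced in the passage $w_{n,1}\to w_{n,2}\to\dots\to w_{n,n}=w_n$. Under the assumption that every $t$-band in $D$ has length less than $E_n$, select $\beta$ among $t$-bands with both endpoints at level one so that the number of level-one $t$-letters of $w$ strictly between them on the ``inside'' arc is minimal. If both endpoints $t_1,t_2$ of $\beta$ belong to the same $w_n^{\pm 1}$-block of $w$, then by contracting (using the identity $y^{\pm E_{n-m}}=t^{-1}y^{-E_{n-m-1}}x^{\pm 1}y^{E_{n-m-1}}t$ at each deeper level) all $t$-bands of $D$ whose endpoints are of level $\geq 2$, we obtain a diagram in which $\beta$ survives as a non-circular $t$-band of a diagram for $w_{n,1}$; by Remark \ref{ch5:chapter1sum} this band has length $\geq E_n$, contradicting the assumption.

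If instead $t_1$ and $t_2$ lie in different $w_n^{\pm 1}$-blocks, then by the minimality of the level-one count the inside arc reads a tail of one block followed by a single instance of $B$ followed by a head of the next block, with no further level-one $t$-letters and no fully enclosed complete $w_n^{\pm 1}$-block. Since $w_n$ begins with $t^{-1}$ and ends with $x^{-7}$, and correspondingly $w_n^{-1}$ begins with $x^{7}$ and ends with $t$, the four possible junction types $aa$, $aa^{-1}$, $a^{-1}a$, $a^{-1}a^{-1}$ in $\tilde w$ produce inside-arc labels $x^{-7}B$, $x^{-7}Bx^{7}$, $B$, $Bx^{7}$ respectively. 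The endpoint orientations ($t_1=t$ on one side, $t_2=t^{-1}$ on the other) force the $y$-side of $\beta$ to face the inside arc, so by the first paragraph that arc must equal some $y^{i}$ in $K$; this is precisely what the four hypotheses on $B$ rule out, a contradiction.

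The main obstacle is justifying the contraction of deeper-level $t$-bands in the same-block case together with the exclusion of fully enclosed complete $w_n^{\pm 1}$-blocks in the different-block case. Both points amount to an induction on substitution level combined with an innermost-selection argument, and I expect them to be carried out exactly as in the effective small-cancellation bookkeeping of \cite{lishak}. A secondary technical check is the identification of which of the two sides ($x$-side vs.\ $y$-side) of $\beta$ faces the inside arc; this follows from the orientations of the $t$-letters $t_1,t_2$ together with the standard convention built into the HNN-relator $t^{-1}xt=y$, and the hypothesis on $B$ is evidently calibrated to whichever side turns out to be the correct one.
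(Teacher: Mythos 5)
Your proposal takes essentially the same route as the paper's proof: analyze $t$-bands in a van Kampen diagram, select an ``innermost'' $t$-band joining two level-one $t$-letters, and enumerate the possible inside-arc labels, using the hypotheses on $B$ to rule out all of the $t\ldots t^{-1}$ junction types. The four cross-block junction labels $x^{-7}B$, $x^{-7}Bx^7$, $B$, $Bx^7$, and their matching to the four hypotheses on $B$, coincide exactly with the paper's list, as does the orientation observation that the $y$-side of the band faces the inside arc when the endpoints read $t\ldots t^{-1}$.

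Two points are worth noting. First, what you call the ``main obstacle''---showing that the $t$-band attached to a level-one $t$-letter ends at a level-one $t$-letter, which is what makes your minimal-count selection of $\beta$ behave---is handled in the paper by a one-line counting observation (the difference of $t$-counts between the two ends of any $t$-band vanishes, so the well-nested level structure of the $t$-letters forces the matching to respect level one), not by the small-cancellation bookkeeping of \cite{lishak} to which you defer. Second, in the same-block $t^{-1}\ldots t$ case the paper does not ``contract'' deeper $t$-bands and appeal to Remark \ref{ch5:chapter1sum}; it notes directly that the inside arc $A$ satisfies $A =_G x^{\pm E_n}$ (because $y^{-E_{n-1}}x^{\pm 1}y^{E_{n-1}} =_K x^{\pm E_n}$ and the deeper-level expansion represents the same element in $G$), so the $x$-side of $\beta$, being equal to $A$ in $G$ through the enclosed subdiagram, reads $x^{\pm E_n}$ and the band has length exactly $E_n$. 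Your contraction picture is morally equivalent but is looser as stated---the diagram's boundary is $w$, not $w_{n,1}$, so ``survives as a non-circular $t$-band of a diagram for $w_{n,1}$'' needs justification that the direct computation avoids. Neither point changes the overall strategy, which is the paper's.
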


\begin{proof}
Consider a van Kampen diagram for $w$. Let us call letters $t, t^{-1}$ on the boundary of this diagram ``outer'' if they come from $w_{n,1}$. Recall that $w_{n,1}$ is the product of three commutators of the form

\begin{displaymath}
[t^{-1} y^{-E_{n-1}}x^{-1}y^{E_{n-1}} t x t^{-1} y^{-E_{n-1}}x^{1}y^{E_{n-1}} t,x^3],
\end{displaymath}
where in the other two commutators $x^3$ is replaced with $x^5$ and $x^7$. First, notice that any $t$-band in the diagram originating on an outer letter has to end on an outer letter. That follows from counting the number of letters $t$ and subtracting from that the number of letters $t^{-1}$ between the two ends of a $t$-band. This difference has to be equal to $0$. Pick an outer letter and a $t$-band corresponding to it. Pick another outer letter between the ends of this $t$-band, there is the corresponding inner $t$-band. Continue until we find a $t$-band between neighbouring outer letters. We claim that this $t$-band has length $E_n$. The only possible neighbouring outer letters are $t^{-1}At$ ($A=_G x^{\pm E_n}$), $tx^it^{-1}$ (not a pinch), $tx^{-7} B x^{7} t^{-1}$ (coming from $w_n B w_n^{-1}$), $t B t^{-1}$ (coming from $w_n^{-1} B w_n$), $tx^{-7} B t^{-1}$ (coming from $w_n B w_n$), $t B x^{7} t^{-1}$ (coming from $w_n^{-1} B w_n^{-1})$. The last four pairs are not pinches because of the requirements on $B$. The claim and the lemma follow.

\end{proof}

\begin {lem} \label{ch5:s2}
Assume $Area_G(x^{i} u_1 x^j u_2) < E_n$, where $u_1,u_2$ are powers of $v w_n^{-1} v^{-1} w_n$ or $t$, $i \neq 0$ and $v$ is as in Lemma \ref{ch5:solitar}. Then $u_1,u_2 = 1$ as words. Similarly, if $u_3 = (v w_n^{-1} v^{-1} w_n)^k$ for $k \neq 0$, then $Area_G(u_3) \geq E_n$.
\end{lem}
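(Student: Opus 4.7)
The plan is to deduce both parts from Lemma \ref{ch5:s1}, by rewriting each given word as a substitution into an auxiliary $\tilde w$ in $\{a, a^{-1}, b\}$ with $a = w_n$ and the $b$'s in $K$, and checking the no-pinch conditions in $K = BS(1,2)$.

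For the second claim (about $u_3 = (v w_n^{-1} v^{-1} w_n)^k$ with $k \neq 0$): WLOG $k > 0$, and I take $\tilde w = (v\, a^{-1}\, v^{-1}\, a)^k$. The $b$'s strictly between two $a$'s are all $v^{\pm 1}$ --- $v^{-1}$ between $a^{-1}$ and $a$, and $v$ between $a$ and $a^{-1}$ --- so Lemma \ref{ch5:s1} requires $v^{-1} \neq y^i$ and $x^{-7} v\, x^7 \neq y^i$ in $K$. By Lemma \ref{ch5:solitar} (and its proof) $v$ has the normal form $v = y^m x^{j'}$ in $K$, with $m \geq 1$ and $0 < j' < 2^m$ under the assumption that $v$ contains at least one letter $yx$. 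A short normal-form calculation using $x^k y^m = y^m x^{k \cdot 2^m}$ in $K$ yields $v^{-1} = y^{-m} x^{-j'/2^m}$ and $x^{-7} v\, x^7 = y^m x^{j' + 7 - 7 \cdot 2^m}$; both have nonzero $x$-exponent (the second because $j' + 7 - 7 \cdot 2^m \leq -6 \cdot 2^m + 6 < 0$ for $m \geq 1$), so neither lies in $\langle y \rangle$. Thus Lemma \ref{ch5:s1} applies, and --- since $u_3 = 1$ in $G$ (as $w_n = 1$ in $G$) --- I conclude $\mathrm{Area}_G(u_3) \geq E_n$.

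For the first claim, I argue by contrapositive: if $(u_1, u_2) \neq (1, 1)$, then $\mathrm{Area}_G(x^i u_1 x^j u_2) \geq E_n$ (including the case that the area is infinite, i.e., the word is nontrivial in $G$). The $t$-exponent homomorphism $G \to \mathbb{Z}$ vanishes on $x^{\pm 1}$ and on $v w_n^{-1} v^{-1} w_n$ but is nontrivial on $t^{\pm 1}$, so triviality in $G$ kills the ``mixed'' configurations (where one $u_\ell$ is a nontrivial $t$-power and the other is a nontrivial power of $v w_n^{-1} v^{-1} w_n$) and leaves two subcases: (a) $u_1 = t^{k_1}, u_2 = t^{-k_1}$ with $k_1 \neq 0$; and (b) both $u_\ell$'s are powers of $v w_n^{-1} v^{-1} w_n$, not both trivial. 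Subcase (a) is handled by Britton's lemma applied to $x^i t^{k_1} x^j t^{-k_1} = 1$ in $G$: using $\langle x \rangle \cap \langle y \rangle = \{e\}$ in $K$, successive attempts to pinch-reduce get blocked unless $i = 0$, contradicting $i \neq 0$. Subcase (b) repeats the recipe of the second claim, with the new feature being a ``bridge'' $b = x^j v$ between the last $a$ of $u_1$ and the first $a^{-1}$ of $u_2$; the corresponding no-pinch condition $x^{-7}(x^j v) x^7 \neq y^i$ reduces via normal form to the integer inequality $(j - 7) 2^m + j' + 7 \neq 0$.

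The main obstacle is precisely this bridge condition in subcase (b). For generic $j$ this inequality holds, but there are exceptional pairs $(j, v)$ --- e.g.\ $j = 6$ together with $v$ whose normal form has $j' = 2^m - 7$ (valid for $m \geq 3$) --- for which the naive no-pinch condition fails. The resolution is that the proof of Lemma \ref{ch5:s1} only requires the existence of \emph{some} non-pinch ``neighbouring outer'' $t$-pair in the van Kampen diagram: since every $w_n$ already contains internal outer-$t$ pairs of the form $(t,\, x^i,\, t^{-1})$ with $i \in \{3, 5, 7\}$ --- always non-pinches because $x^i \notin \langle y \rangle$ in $K$ --- and the interior $b$'s of type $v^{\pm 1}$ always satisfy the no-pinch condition (as verified in the second claim), a non-pinch $t$-band of length $\geq E_n$ (by the $E_n$-reducedness of $w_{n,1}$ recorded in Remark \ref{ch5:chapter1sum}) can always be located inside the diagram, yielding the required area bound regardless of whether the single bridge $b$ is accidentally a pinch.
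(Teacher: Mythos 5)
Your overall strategy --- reduce both claims to Lemma~\ref{ch5:s1} by writing the word as a substitution $a \mapsto w_n$, $b \mapsto B \in K$, then check the no-pinch conditions via normal-form arithmetic in $K$ --- is exactly the paper's approach, and your second claim (the $u_3$ part), your exclusion of the mixed $t$/non-$t$ case by the $t$-exponent, and your Britton's-lemma treatment of the $t$-power case are all fine. You have also put your finger on a genuine subtlety that the paper's write-up glosses over: the paper disposes of the bridges $x^i v$ and $v^{-1}x^i$ by asserting that if $v=y^k x^m$ then $m$ is even and hence the $x$-exponent after conjugation by $x^7$ is odd; this parity claim holds only when the last block of $v$ is $y$ (so the rightmost digit of the binary code of $m$ is $0$), which should be read as an implicit standing restriction on the words $v$ used throughout the construction. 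Under that restriction there are no exceptional pairs $(j,v)$ and all six bridge types of Lemma~\ref{ch5:s1} are non-pinches, so the paper's application of Lemma~\ref{ch5:s1} is legitimate.

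The gap in your proposal is precisely the ``resolution'' paragraph. You correctly observe that without the parity restriction there are exceptional values (your $j=6$, $j'=2^m-7$) for which the bridge is a genuine pinch, and you then try to rescue the argument by claiming that some \emph{other} innermost neighbouring outer-$t$ pair must be a non-pinch and hence carry a long $t$-band. But the innermost pairs are determined by the non-crossing matching of outer $t$-letters imposed by the diagram, not chosen by you; the inductive ``pick an inner letter and descend'' procedure in the proof of Lemma~\ref{ch5:s1} can terminate at \emph{any} innermost pair, and nothing you say rules out a diagram in which the only innermost pairs sit over the bridges. (Note also that there can be up to two bridges in the cyclic word, not one.) Asserting the existence of a non-pinch innermost $t$-band ``regardless of whether the single bridge is accidentally a pinch'' is therefore a claim that needs its own argument, and as written it does not follow from Remark~\ref{ch5:chapter1sum} or from the proof of Lemma~\ref{ch5:s1}. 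The cleaner route is the paper's: impose (or make explicit) the constraint that $v$ ends in $y$, so that the parity argument kills the bridge pinch outright, rather than attempting to reroute the $t$-band argument around it.
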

\begin{proof}
We look at cases. If one of the words is a power of $t$ then the other has to be the inverse of this power, and since $i \neq0$,  $j \neq 0$, which implies $v_1=v_2=0$. If one of the words is a power of $v w_n^{-1} v^{-1} w_n$, then so is the other one and $i=j$. We want to apply Lemma \ref{ch5:s1} and therefore want to check the conditions in its statement. Between $w_n^{-1}$ and $w_n$ we can have $v^{-1} = x^{-m}y^{-k} \neq y^p$. Between $w_n$ and $w_n^{-1}$ we can have $v$, $x^i$, $x^i v$, $v^{-1}x^i$, or $v^{-1} x^i v$. We need to check that if we conjugate any of them by $x^7$ we won't get a power of $y$. It is true for the second and the fifth element because $i\neq 0$. For the rest we will use the following fact, if $v = y^k x^m$, then $k>0$ and $m$ is even. Checking for the first element: $x^{-7} v x^{7} = x^{-7} y^k x^m x^{7} = y^k x^{odd} \neq y^p$. Similarly for the third: $x^{-7} x^i v x^{7} = x^{-7} x^i y^k x^m x^{7} = y^k x^{odd} \neq y^p$. And the fourth: $x^{-7} v^{-1} x^i x^{7} = x^{-7} x^{-m}y^{-k} x^i x^{7} = x^{odd} y^{-k} \neq y^p$. Therefore, the lemma follows from Lemma \ref{ch5:s1}.
\end {proof}

\begin {lem} \label{ch5:s4}

Let $Area_G(g u_1 g^{-1} u_2) < E_{n-1}$, where $g$ is a word in $G$, $u_1 =A^i, u_2= B^j$, where $A,B$ are $v w_n^{-1} v^{-1} w_n$ or $t$, $i,j$ are less than $E_{n-1}$, and $v$ is as in Lemma \ref{ch5:solitar}. Then $A=B$ and $i= \pm j$. Furthermore, if $A=B=t$, then $g=_G t^k$ through area less than $E_{n-1}$.

\end{lem}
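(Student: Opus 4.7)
The plan is to combine an abelianization argument with a van Kampen diagram $t$-band analysis, relying on Lemmas~\ref{ch5:s1} and \ref{ch5:s2}. I begin by passing to the abelianization $G^{ab}\cong\mathbb Z$, in which $t\mapsto 1$ and $x,y\mapsto 0$, so that the commutator $c:=vw_n^{-1}v^{-1}w_n=[v,w_n^{-1}]$ maps to $0$. Conjugation preserves $t$-exponent sums, so $gu_1g^{-1}u_2=_G 1$ forces $\mathrm{exp}_t(u_1)+\mathrm{exp}_t(u_2)=0$ in $\mathbb Z$. In the mixed case $\{A,B\}=\{t,c\}$, this immediately gives one of $i,j$ equal to $0$, making the corresponding $u_i$ empty; the remaining word is a power of $c$ of area $<E_{n-1}<E_n$, so the second part of Lemma~\ref{ch5:s2} forces the other exponent to vanish as well, and the ``$A=B$'' conclusion then holds vacuously. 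If $A=B=t$, abelianization immediately gives $j=-i$, so $i=\pm j$.

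The main case is $A=B=c$. Take a minimal van Kampen diagram $D$ for $gc^ig^{-1}c^j$ of area $<E_{n-1}$, and trace the $t$-bands emanating from the ``outer'' $t$-letters of each $w_n^{\pm 1}$ on the boundary (those contributed by the outermost replacement layer $w_{n,1}$). A $t$-band confined to a single $w_n^{\pm 1}$ has length $\geq E_n$ by Lemma~\ref{ch5:s1}, hence cannot occur. A $t$-band connecting the two $w_n$-subwords inside one $c^{\pm 1}$-block would bound a sub-diagram whose boundary, after reducing the band sides in $K$, is a word of the exact form handled in the proof of Lemma~\ref{ch5:s2}; by the hypothesis on $v$ from Lemma~\ref{ch5:solitar} the side conditions of Lemma~\ref{ch5:s1} are satisfied, so that sub-diagram again requires area $\geq E_n$ and is impossible. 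Therefore every outer $t$-band must pair $w_n$'s from distinct $c^{\pm 1}$-blocks, cancelling one block of $c^i$ against one block of $c^j$. The uncancelled blocks form a residual word $c^r =_G 1$ of area $<E_n$; by Lemma~\ref{ch5:s2} then $r=0$, and the non-crossing bookkeeping of the matchings (depending on whether $i$ and $j$ have the same or opposite sign) yields $i=\pm j$.

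For the ``furthermore'' claim, with $A=B=t$ and $j=-i$, view $G$ as the HNN extension $\langle K,t\mid t^{-1}xt=y\rangle$ over $K=BS(1,2)$, and examine the $t$-bands in $D$ for $gt^ig^{-1}t^{-i}$. The two sides of each $t$-band are corresponding elements of $\langle x\rangle$ and $\langle y\rangle$ in $K$; since $\langle x\rangle \cap \langle y\rangle = \{1\}$ in $K$, the centralizer of any non-trivial power of $t$ in $G$ equals $\langle t\rangle$, so $g=_G t^k$ for some $k\in\mathbb Z$. Re-assembling the cells of $D$ along its $t$-corridors produces a van Kampen diagram for $gt^{-k}$ using only cells of $D$, hence of area $<E_{n-1}$. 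The main technical obstacle is the $t$-band case analysis in the $A=B=c$ case, where one needs the effective small cancellation content of Lemma~\ref{ch5:s1} to rule out both confined and within-block pairings and force the matching to be entirely cross-block, so that the cancellation accounting applies.
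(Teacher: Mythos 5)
Your abelianization argument cleanly disposes of the cases where $t$ occurs (the mixed case $\{A,B\}=\{t,vw_n^{-1}v^{-1}w_n\}$ and the derivation of $j=-i$ when $A=B=t$); this is precisely what the paper dismisses as ``clear,'' so that portion is fine, if somewhat more explicit than the paper. But the main case $A=B=vw_n^{-1}v^{-1}w_n$ is where the argument needs to be tight, and there your route departs from the paper's in two ways, at least one of which is a genuine gap.

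The first departure: the paper begins by replacing every $w_n$ in the boundary word with $w_{n,1}$, invoking Theorem 2.3 of \cite{lishak} for the area cost of this conversion. That cost is exactly why the hypothesis is $Area<E_{n-1}$ rather than the $E_n$ appearing in Lemmas \ref{ch5:s1} and \ref{ch5:s2} -- after converting, the new diagram has area $<E_n$ and one can apply the $E_n$-level machinery. Your proof never performs this conversion and therefore never accounts for the $E_{n-1}$ in the statement. The payoff of the conversion is that all $t$-letters on the boundary are now at the same level (there is no ``outer vs.\ inner'' bookkeeping), so one can count them directly.

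The second departure is the actual gap. The paper analyses the \emph{annular} diagram for the conjugation $g\,u_1'\,g^{-1}=(u_2')^{-1}$, not the disc diagram for the word $g\,c^i g^{-1} c^j$. In the annulus, the arc labelled $g$ is interior, so a $t$-band starting on the boundary must end on the boundary -- either on the same component (which the proof of Lemma \ref{ch5:s2} excludes by the long-band argument, since the area is $<E_n$) or on the other. Hence every boundary $t$-band crosses, and a one-line count of $t$-letters on the two boundary components (each copy of $w_{n,1}$ contributes the same fixed number, and $v$ has no $t$-letters) yields $|i|=|j|$. In your disc picture, by contrast, an outer $t$-band emanating from $c^i$ can terminate on $g$ or $g^{-1}$, or on a $t$-letter in a \emph{different} block of $c^i$; your case list (confined to one $w_n^{\pm1}$, or within one $c$-block) does not rule these out, so the conclusion ``every outer $t$-band must pair $w_n$'s from distinct $c^{\pm1}$-blocks, cancelling one block of $c^i$ against one block of $c^j$'' does not follow from what you've checked. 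Folding along $g$ to pass to the annular diagram would repair this, and would also make the ``cancellation accounting'' and ``non-crossing bookkeeping'' unnecessary: the direct $t$-letter count replaces them.

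For the ``furthermore'' claim the paper again says only that it is clear; your centralizer sketch using $\langle x\rangle\cap\langle y\rangle=\{1\}$ and the reuse of the cells of $D$ for the area estimate are the right instincts and consistent with what the paper intends.
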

\begin {proof}
When $A$ or $B$ is $t$, the conclusion is clear. Suppose $A=B=v w_n^{-1} v^{-1} w_n$. Let $u'_1$ be $u_1$ after all $w_n$ are replaced with $w_{n,1}$. Similarly define $u'_2$. Then $Area_G(g u'_1 g^{-1} u'_2) \leq E_{n}$ (one can see the proof of Theorem 2.3 from \cite{lishak} for a complete calculation). Consider an annular van Kampen diagram of area less than $E_n$ for this conjugation. The $t$-bands on this diagram can not originate and end on the same boundary component (see the proof of Lemma \ref{ch5:s2}). Therefore the number of letters $t$ is the same for both boundary components and $i= \pm j$.

\end {proof}

Notice the decrease of the upper bound ($E_{n-1}$ versus $E_n$ in Lemma \ref{ch5:s2}) in the previous lemma. It is there for technical reasons, and we believe can be eliminated with some extra work. Similarly, the factor of $2$ in the next lemma is, probably, unnecessary.

\begin{figure}[h]
\centering
\includegraphics[scale=0.5]{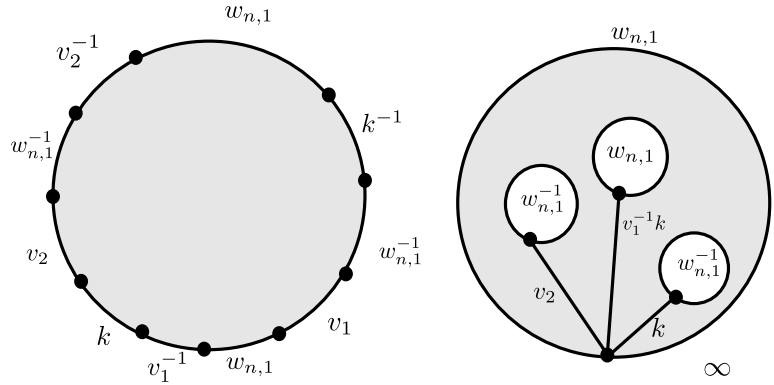}
\caption{Two Van Kampen diagrams for $k^{-1} (v_1 w_{n,1}^{-1} v_1^{-1} w_{n,1}) k = v_2 w_{n,1}^{-1} v_2^{-1} w_{n,1}$. The one on the right was obtained from the one on the left by gluing parts of the boundary together. On can think of the right diagram as spherical by placing $\infty$ outside of the outer $w_{n,1}$.}
\label{ch5:fig:sphere1}
\end{figure}

\begin {lem} \label{ch5:s5}
Let $v_1 \neq v_2$ be as in Lemma \ref{ch5:solitar}, then $Area_G( k^{-1} (v_1 w_n^{-1} v_1^{-1} w_n) k ( v_2 w_n^{-1} v_2^{-1} w_n)^{-1}) \geq \frac {1} {2} E_n$, where $k$ is a word in $G$.

\end{lem}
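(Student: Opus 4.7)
Argue by contradiction: assume $Area_G(k^{-1}(v_1 w_n^{-1} v_1^{-1} w_n) k (v_2 w_n^{-1} v_2^{-1} w_n)^{-1}) < \tfrac12 E_n$. Following the template of Lemma \ref{ch5:s4}, the first step is to replace every occurrence of $w_n$ in the boundary word by the longer word $w_{n,1}$. As in the computation referenced in the proof of Lemma \ref{ch5:s4} (see Theorem 2.3 in \cite{lishak}), this substitution converts the hypothetical diagram into a van Kampen diagram of area strictly less than $E_n$ for the corresponding word with $w_{n,1}$ everywhere in place of $w_n$. Now glue the two $k$-arcs on the boundary to convert this disk diagram into an annular diagram whose two boundary cycles read $v_1 w_{n,1}^{-1} v_1^{-1} w_{n,1}$ and $v_2 w_{n,1}^{-1} v_2^{-1} w_{n,1}$; this is precisely the picture drawn in Figure \ref{ch5:fig:sphere1}.

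The next step is the now-familiar $t$-band analysis applied to the outer $t$-letters of the two boundary $w_{n,1}$'s. Each outer $t$-letter is the endpoint of a unique $t$-band which, by the usual counting of $t$ versus $t^{-1}$, must terminate at another outer $t$-letter. Suppose two outer $t$-letters sitting on the \emph{same} boundary cycle are paired by a band and the subword strictly between them (read along that boundary) is not already a pinch. Then by the case analysis in Lemma \ref{ch5:s1} the $t$-band between them has length at least $E_n$, which is incompatible with the area bound. Consequently every outer $t$-letter of the $v_1$-side $w_{n,1}$ must be paired by a $t$-band to an outer $t$-letter of the $v_2$-side $w_{n,1}$, yielding a cyclic one-to-one correspondence between the outer $t$-letters of the two boundary components.

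Cutting the annulus along these crossing $t$-bands decomposes it into finitely many subdiagrams, each of which contains no further $t$-bands and therefore represents an identity in the Baumslag--Solitar group $K$. Reading the boundary of each piece, one obtains a $K$-equality in which a subword of $v_1^{\pm 1}$ (together with short neighboring syllables of $w_{n,1}$ and pieces inherited from $k$) is declared equal to a corresponding subword of $v_2^{\pm 1}$. Feeding these equalities into the normal form of Lemma \ref{ch5:solitar} --- namely, writing each word in $\{y, yx\}$ uniquely as $y^i x^j$ where the binary expansion of $j$ records the positions of the $x$'s --- forces $v_1$ and $v_2$ to have the same exponent $i$ and the same integer $j$, hence to coincide as words, contradicting $v_1 \ne v_2$.

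The main obstacle is the last step, in which the cross-band combinatorics must be converted into $K$-relations that really do force $v_1 = v_2$ rather than merely some weaker identity. The conjugator $k$ is arbitrary and can in principle shuffle subwords across the annulus and contribute $y$- and $x$-letters in the pieces between consecutive cross-bands, and one must also be careful about the single potentially long ``internal'' $t$-band that the slack factor of $\tfrac12$ appears to allow. Propagating the $K$-equalities consistently around the annulus and collapsing them via the rigidity of the binary representation in Lemma \ref{ch5:solitar} is the core of the argument.
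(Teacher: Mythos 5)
Your opening moves match the paper's: replace $w_n$ by $w_{n,1}$ (costing about $100E_{n-1}$ cells, which is exactly what the $\tfrac12$ slack is for --- not, as you suggest at the end, a hypothetical long ``internal'' $t$-band), and then fold the boundary. But after that the two arguments diverge, and yours has a gap that you yourself flag without resolving.

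The paper does not fold the diagram into an annulus over $G$; it instead rewrites the boundary word so that the whole picture becomes a \emph{spherical} diagram over the presentation $\langle G \mid w_{n,1}\rangle$, with four $w_{n,1}$-cells (one per occurrence of $w_{n,1}^{\pm1}$ in the free-group identity $w_{n,1} = (v_2^{-1})^{-1}w_{n,1}(v_2^{-1})\cdot(v_1^{-1}k)^{-1}w_{n,1}^{-1}(v_1^{-1}k)\cdot k^{-1}w_{n,1}k$, plus the outer boundary made into a cell). The decisive tool is then Theorem~3.10 of \cite{lishak}: $w_{n,1}$ satisfies an effective $(6,3)$ small-cancellation condition over the HNN-structure of $G$, so any spherical diagram of area $<E_n$ must contain a pair of $w_{n,1}$-cells that touch (a ``cancellation''). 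From the position of the touching pair one reads off that a specific loop label is $G$-trivial, which after the case analysis gives $kv_2=_G v_1$ and $k=_G 1$, hence $v_1 =_G v_2$, contradicting Lemma~\ref{ch5:solitar}. The $G$-nontriviality that Lemma~\ref{ch5:solitar} supplies is all that is used; the binary-expansion normal form plays no further role in the paper's proof of this particular lemma.

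Your proposed route --- match outer $t$-letters of the two annular boundary components by crossing $t$-bands, cut along them, and try to read off that $v_1=v_2$ from the resulting $K$-equalities --- bypasses the effective small-cancellation theorem entirely, and this is precisely where it stalls. Two concrete problems. First, the claim that a same-side pairing of outer $t$-letters forces a band of length $\geq E_n$ does not follow from the case analysis in Lemma~\ref{ch5:s1}: there the word between the two letters was one of a short list of controlled forms coming from $w_n$ and a single interleaved $B\in K$, whereas on your annulus the intermediate arc can contain all of $v_i^{\pm 1}$, pieces of $w_{n,1}$, and subwords contributed by $k$ after the folding, and you have not ruled out that this arc is a genuine pinch. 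Second, even granting that all bands cross, ``propagating the $K$-equalities around the annulus'' to force $v_1=v_2$ is exactly the step you admit you cannot carry out, and it is far from a formality --- the conjugator $k$ is completely arbitrary and the pieces it contributes are not visible in the boundary word. This step is what Theorem~3.10 of \cite{lishak} is engineered to replace: the small-cancellation structure of $w_{n,1}$ converts the combinatorics into a clean ``two cells must touch'' statement, from which the needed $G$-identities fall out immediately. Without that ingredient, or some substitute for it, the proposal does not close.
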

\begin {proof}

We can apply less than $100E_{n-1}$ relations to convert $w_n$ to $w_{n,1}$, obtaining a van Kampen diagram for $k^{-1} (v_1 w_{n,1}^{-1} v_1^{-1} w_{n,1}) k = v_2 w_{n,1}^{-1} v_2^{-1} w_{n,1}$ of area less than $E_n$. We want to apply our ``effective'' small cancellation theory from \cite{lishak} to this equality. We can view it as a van Kampen diagram with boundary $w_{n,1}$ over $\langle G| w_{n,1} \rangle$, because we have the equality in the free group: $w_{n,1} =  (v_2^{-1})^{-1} w_{n,1} (v_2^{-1}) \cdot (v^{-1}_1 k )^{-1}w^{-1}_{n,1}(v^{-1}_1 k ) \cdot (k)^{-1}  w_{n,1} (k)$. Since the boundary of this diagram is also $w_{n,1}$ it can be viewed as a spherical van Kampen diagram over $\langle G| w_{n,1} \rangle$ (see Figure \ref{ch5:fig:sphere1}), call it $D$. 
\par
We want to apply Theorem 3.10 from \cite{lishak} to $D$. This theorem says that if there is an HNN-extension ($G$ in this case) and some added relations ($w_{n,1}$ in this case) satisfying effective small cancellation conditions, then elements in $G$ which are not trivial in $G$ can not be effectively trivial in $\langle G| w_{n,1} \rangle$. In \cite{lishak} it was checked that the relator $t^{-1}w_{n,1}$ satisfies the effective metric condition $C'(\frac{1}{6})$ and therefore condition $(6,3)$. Similarly, $w_{n,1}$ satisfies $(6,3)$. We don't have the metric condition $C'(\frac{1}{6})$ (our pieces can be \emph{exactly} of length $\frac{1}{6}$), but it is not needed in the proof of that theorem. Secondly, our diagram is spherical, not planar, but we can just replace the Euler characteristic of the disk ($1$) with that of the sphere ($2$) in the proof of Theorem 3.10 from \cite{lishak} to get the same result. In the proof of that theorem the diagram is first made $w_{n,1}$-reduced, i.e. $w_{n,1}$-cells having large common boundaries are canceled out, then the contradiction is reached. We can not do that with spherical diagrams, because a spherical diagram might just disappear with the last cancellation not giving us a contradiction. Then Theorem 3.10 from \cite{lishak} implies that there is a cancellation in our diagram.

\par

Cancellations happen when there are a lot of $t$-bands between $w_{n,1}$-cells. In \cite{lishak} it was shown that in this case these bands all have length $0$, i.e. there is a pair of $w_{n,1}$-cells that touch each other. Consider two cases. Case 1: there is a cancellation involving the first $w_{n,1}^{-1}$ cell (conjugated by $v_2$) and case 2: a cancellation with the other $w_{n,1}^{-1}$ cell. In case 1 we have two options: $p v_2^{-1} k^{-1} v_1 p^{-1}$ (see Figure \ref{ch5:fig:sphere2} for the definition of $p$) is a loop on the diagram, or $v_2$ is. Note, loops in $D$ represent $1$ in $G$ because $\langle G| w_{n,1} \rangle = G$ as groups. By assumption, $v_2 \neq_G 1$, therefore $p v_2^{-1} k^{-1} v_1 p^{-1} =_G 1$, or $k v_2=_G v_1$. After canceling this pair of $w_n$-cells we obtain a spherical diagram $D'$ that has one pair of $w_n$-cells. In $D$ these cells were connected by a curve spelling $k$. After the cancellation the curve spells $k'=_G k$. Note, $k$ might not be equal to $k'$ effectively. For example they might differ by one of the removed cells. In $D'$ the two remaining cells have to cancel each other, therefore $k'$ is a loop and thus $k=_G k'=_G 1$. We know from before $k v_2=_G v_1$, and so $v_2=_G v_1$. Case 2 is dealt with similarly.

\end {proof}

\begin{figure}[h]
\centering
\includegraphics[scale=0.7]{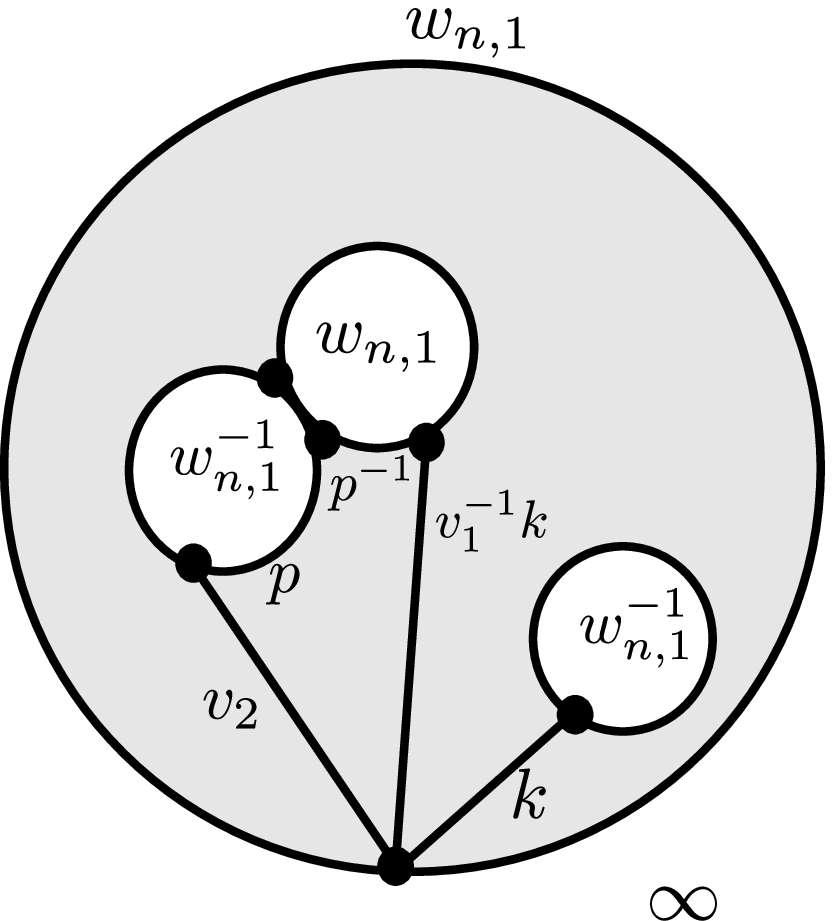}
\caption{Two cells are touching along a $t$-cable of $0$ thickness. Since this $t$-cable is long enough the cells touch in the same position.}
\label{ch5:fig:sphere2}
\end{figure}

By combining Lemma \ref{ch5:s2} and a result from \cite{bridson3} we can get the following lemma. We reproduce the proof here for completeness.

\begin {lem} \label{ch5:hat1}
Let $w$ be a non-empty freely reduced word in $\langle s,x \rangle$. We can think of $w$ as a word in $H_v$. Then if $w=_{H_v} 1$, $Area_{H_v}(w) \geq E_n$.
\end{lem}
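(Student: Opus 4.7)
The plan is to suppose, for contradiction, that $w$ admits a van Kampen diagram $D$ over $H_v$ of area strictly less than $E_n$, and to extract a contradiction from the structure of the $s$-corridors in $D$. The $s$-relator, written cyclically, is $s^{-1}(vw_n^{-1}v^{-1}w_n)st^{-1}$ and contains exactly two $s$-edges, so the $s$-cells of $D$ organise into maximal $s$-bands; a band of length $k\geq 1$ has one long side labelled by $(vw_n^{-1}v^{-1}w_n)^{\pm k}$ and the opposite long side by $t^{\pm k}$.

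First I would rule out annular $s$-bands. An innermost annular $s$-band would bound a subdiagram $D_0$ containing no $s$-cells whose boundary cycle is either $t^{\pm k}$ or $(vw_n^{-1}v^{-1}w_n)^{\pm k}$ for some $k\geq 1$. The first option is impossible in $G$ since $t$ has infinite order, and the second is ruled out by the second statement of Lemma \ref{ch5:s2}, which forces $Area_G\geq E_n$ and contradicts $Area(D)<E_n$. Thus every $s$-band of $D$ has both of its ends on $\partial D$.

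If $w$ has no $s$-letters, then $D$ contains no $s$-cells at all, and hence $D$ is a diagram over $G$ for the nonempty freely reduced word $w=x^m$ with $m\neq 0$, which is absurd since $x$ has infinite order in $G$. So assume $w$ does have $s$-letters. Because $H_v\cong\mathbb{Z}$ is generated by the image of $s$ (with $x$, $y$, $t$ all killed), the $s$-exponent sum of $w$ vanishes, so the $s$-letters of $\partial D$ pair up as endpoints of $s$-bands. Pick an outermost such band $B$ of length $k\geq 1$: its two endpoints cut off a subarc of $\partial D$ containing no further $s$-letters of $w$. The subword of $w$ along this arc is then of the form $w'=x^m$ for some $m\in\mathbb{Z}$, and the region $D'$ enclosed by $B$ together with this arc contains no $s$-cells (no annular $s$-bands exist by the previous paragraph, and no band from $\partial D$ to $\partial D$ can land in the short arc by the outermost choice). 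Hence $D'$ is a $G$-diagram with boundary $x^m\cdot U$, where $U$ is either $t^{\pm k}$ or $(vw_n^{-1}v^{-1}w_n)^{\pm k}$, and $Area_G(x^m\cdot U)<E_n$.

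Two cases remain. If $U=t^{\pm k}$, then viewing $G$ as the HNN-extension of the Baumslag--Solitar group $K$ with stable letter $t$ and reading off the $t$-exponent sum of $x^m\cdot t^{\pm k}=1$ forces $\pm k=0$, contradicting $k\geq 1$. If $U=(vw_n^{-1}v^{-1}w_n)^{\pm k}$ and $m=0$, the second statement of Lemma \ref{ch5:s2} gives $Area_G(U)\geq E_n$, a contradiction; if instead $m\neq 0$, the first statement of Lemma \ref{ch5:s2}, applied with $i=m$, $u_1=U$, $j=0$ and $u_2$ the empty word, forces $U$ to be the empty word and hence $k=0$, again a contradiction. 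I expect the main bookkeeping point to be tracking orientations so that the two long sides of an $s$-band really are $(vw_n^{-1}v^{-1}w_n)^{\pm k}$ and $t^{\pm k}$ as claimed, and checking that ``outermost'' genuinely produces a subdiagram with no $s$-cells inside; once annular $s$-bands are ruled out, this is standard corridor bookkeeping rather than a conceptual obstacle, and the real work sits entirely inside Lemma \ref{ch5:s2}.
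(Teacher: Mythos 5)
Your proof is correct and follows the same strategy as the paper's: rule out annular $s$-bands, then rule out boundary-to-boundary $s$-bands, leaving a pure $G$-diagram for a nonzero power of $x$. In fact you are somewhat more careful than the paper, which dismisses boundary-to-boundary bands with the terse remark ``$1\neq_G x^i\neq_G t^j$'' and leaves implicit the case where the $(vw_n^{-1}v^{-1}w_n)$-side of the band faces the short boundary arc, whereas you explicitly invoke both parts of Lemma~\ref{ch5:s2} to cover that orientation.
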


\begin {proof}
Suppose the area is less than $E_n$. First we prove, that circular $s$-bands are impossible. Find an innermost circular $s$-band. Then we see that either a power of $t$ or a power of $v w_n^{-1} v^{-1} w_n$ is $1$ in $G$ through area $< E_n$. This is impossible by Lemma \ref{ch5:s1}. We checked the conditions required for an application of Lemma \ref{ch5:s1} in the proof of Lemma \ref{ch5:s2}. Semi-circular $s$-bands are impossible because $1 \neq_G x^i \neq_G t^j$. Therefore $w=_G 1$, which is impossible, a contradiction.
\end {proof}

\begin {lem} \label{ch5:hat2}
Let $r_1, r_2$ be non-empty freely reduced words in $\langle s,x \rangle$. If $ g r_1 g^{-1} =_{H_v} r_2$ (via area $<N < E_n$) for some $g \in H_v$, then either $g$ is equal to a word in $\langle s,x \rangle$ in $H_v$ via area $< N$ or $r_1 = x^i$, $r_2 = x^j$ for some $i,j$.
\end{lem}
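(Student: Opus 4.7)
My plan is to analyze the $s$-band structure of a van Kampen diagram $D$ for $g r_1 g^{-1} r_2^{-1} = 1$ in $H_v$ of area $<N$. Setting $A := v w_n^{-1} v^{-1} w_n$, the only $H_v$-relator containing $s$ is $\rho := s^{-1} A s t^{-1}$, and each copy of $\rho$ contains exactly one $s^{+1}$ and one $s^{-1}$. Hence the $s$-cells of $D$ group into $s$-bands; a band of length $k$ has one long side reading $A^{\pm k}$ and the opposite long side reading $t^{\pm k}$. The boundary of $D$ reads cyclically $g \cdot r_1 \cdot g^{-1} \cdot r_2^{-1}$, and each $s$-letter in these four arcs is an endpoint of exactly one $s$-band.

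First I would rule out two pathologies. An innermost \emph{closed} $s$-band bounds a subdiagram in $G$ whose boundary is $A^{\pm k}$ or $t^{\pm k}$; by Lemma~\ref{ch5:s2} (applied with $i=0$ to the second assertion) this requires area $\geq E_n$, contradicting the area budget. Next I would show no $s$-band has both endpoints inside $r_1$ (similarly $r_2$): for an innermost such band, the piece of the $r_j$-arc between its two endpoints contains no other $s$-letters and, since $r_j$ is freely reduced in $\{s,x\}$, equals $x^m$ for some $m$. The bounded subdiagram then certifies $x^m = A^{\pm k}$ or $x^m = t^{\pm k}$ in $G$ via area $<N<E_n$; Lemma~\ref{ch5:s2} forces $k=0$ and $m=0$, so the two $s$-letters are adjacent with opposite signs, contradicting freely reduced form of $r_j$.

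Thus every $s$-letter of $r_1$ (resp.\ $r_2$) is connected by an $s$-band to an $s$-letter of $g$, $g^{-1}$, or the opposite $r$-arc. If $r_1$ has no $s$-letters then $r_1 = x^i$; symmetrically if $r_2$ has no $s$-letters then $r_2 = x^j$, landing us in the second alternative, while if $r_2$ has an $s$-letter it must pair with one in $g\cup g^{-1}$, and a counting argument using Step~2 applied now to $r_2$ (combined with the fact that $x^i$ cannot be freely equal to a word with unpaired $s$-letters) forces a contradiction, so $r_2 = x^j$ as well. In the main case where $r_1$ contains an $s$-letter, the $s$-bands decompose $D$ into simply connected subregions lying in $G$; the sides of each subregion come in pieces read off $r_1, r_2, g, g^{-1}$ together with the $A^{\pm k}$ and $t^{\pm k}$ sides of adjacent bands. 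By Lemma~\ref{ch5:s2} applied to each such subregion, the portions contributed by the $g$-arc between consecutive $s$-endpoints are forced to equal powers of $x$ modulo the effective area. Reading these $x$-powers interleaved with the $s^{\pm 1}$-letters at the successive band crossings of the $g$-arc produces an explicit word $\tilde g$ in $\langle s,x \rangle$ equal to $g$ in $H_v$, and the collection of subdiagram areas sums to at most the area of $D$, so this equality holds through area $<N$.

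The main obstacle I expect is the bookkeeping in the last step: verifying that each sub-region's $g$-boundary is \emph{effectively} a pure $x$-power rather than an element of $G$ merely lying in some coset, and ensuring that the collection of subdiagram witnesses fits within the global budget $<N$. This rests on repeated use of Lemma~\ref{ch5:s2}, whose ``$i\neq 0$ forces $u_1=u_2=1$ as words'' conclusion is exactly what kills $t$- and $A$-contributions in each subregion and leaves only $x$-powers, delivering the required word in $\langle s,x\rangle$.
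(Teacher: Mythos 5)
Your overall strategy—analyze the $s$-band structure of a small-area diagram and feed the $G$-subregions into Lemma~\ref{ch5:s2}—is the same as the paper's, and your Step~1 (no closed $s$-bands; no $s$-band with both ends in $r_1$ or both in $r_2$, via an innermost-band argument and the freely-reduced hypothesis) is correct and in fact spells out something the paper only gestures at. The problems are in the later steps.

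First, in the sub-case where $r_1$ has no $s$-letter but $r_2$ does, you claim a counting argument forces a contradiction so that $r_2$ must also be an $x$-power. That claim is false: take $r_1=x$, $g=s$, $r_2=sxs^{-1}$. Here $gr_1g^{-1}=r_2$ holds freely (area $0$), $r_2$ has $s$-letters, and there is no contradiction; the lemma's disjunction is simply satisfied through the \emph{first} alternative, since $g=s\in\langle s,x\rangle$. So this branch cannot be dispatched by forcing alternative~(2); you have to produce the effective word for $g$ as in the main case.

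Second, and more seriously, in the main case you cut $D$ along all $s$-bands and then say that Lemma~\ref{ch5:s2} forces the $g$-arc pieces between consecutive $s$-endpoints to be $x$-powers. But Lemma~\ref{ch5:s2} is a statement about boundary words that are \emph{already} of the shape $x^i u_1 x^j u_2$ with $u_1,u_2$ powers of $A=vw_n^{-1}v^{-1}w_n$ or of $t$; its conclusion is that $u_1,u_2$ are trivial, not that some other boundary arc is an $x$-power. A subregion whose boundary contains an uncontrolled piece of $g$ (which lies in $G$ but is otherwise arbitrary) does not satisfy the hypotheses of Lemma~\ref{ch5:s2}, so invoking it here is circular: you would need the $g$-arcs to be $x$-powers to apply the lemma, which is what you are trying to establish. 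The paper avoids this by first arguing that the $s$-bands emanating from $r_1$ must terminate on $r_2$ (not on $g$ or $g^{-1}$); once that is in place the subregions between consecutive $r_1$--$r_2$ bands have boundary of the exact form $x^{a}u_1x^{-b}u_2$ with $a,b$ the $x$-exponents appearing in $r_1,r_2$, Lemma~\ref{ch5:s2} then applies cleanly and forces the band sides $u_1,u_2$ to be empty (length-$0$ bands), and the remaining region adjacent to the $g$-arc then exhibits $g$ effectively as a word in $\langle s,x\rangle$ within the area budget. Your proposal skips this reduction, and without it the key application of Lemma~\ref{ch5:s2} does not go through.
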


\begin {proof}
Consider a diagram for $ g r_1 g^{-1} =_{H_v} r_2$. As in the proof of Lemma \ref{ch5:hat1} there are no circular $s$-bands, so the subdiagrams between $s$-bands are over $G$. Suppose there is a letter $s^{\pm 1}$ in $r_1$ (otherwise $r_1, r_2$ are powers of $x$). Then the $s$-band originating on it has to go to $r_2$ for the same reasons explained in the proof of Lemma \ref{ch5:hat1}. The subdiagrams between such $s$-bands are in $G$, and we can use Lemma \ref{ch5:s2} to show that these $s$-bands have length $0$. That shows $g$ is equal to a word in $\langle s,x \rangle$ in $H_v$ via area $< N$.
\end {proof}

\section{Effective Isomorphisms} \label{ch5:pseudo-groups}
The goal of this section is to prove that there are no ``effective'' homomorphisms (with certain ``effective injectivity'' conditions) between $\mu_v$ and $\mu_{v'}$ for $v \neq v'$. We are going to use the idea of Collins (\cite{collins}): Let $F:K \to K$ be a homomorphism. Since the Baumslag-Solitar group ($K$) is an HNN extension of $\langle x \rangle$ with the stable letter $y$, elements in $K$ have the invariant $y$-length (the number of $y$, $y^{-1}$ letters in a reduced form). Collins' lemma implies that conjugation can not change $y$-length, if the conjugated elements are cyclically $y$-reduced. Since we have a conjugation $(F(y))^{-1}F(x) F(y) = (F(x))^2$, we see that $y$-length of $F(x)$ has to be $0$, and that's the first step in describing $F$.

\par

Now, $G$ is an HNN-extension of $K$ and therefore, by the same logic, $t$-length of the image of $x$ is $0$. This was used in \cite{brunner} to classify homomorphisms $G \to G$. We are going to extend this logic to $H_v$, an ``effective'' HNN extension of $G$, and finally $\mu_v$ an ``effective'' amalgamated free product of $H_v$ with itself (in free products with amalgamation the invariant length is the number of factors in the reduced form). To do this we need to ``effectivise'' Collins' argument, which can be done by using van Kampen diagrams (see also a classification of maps $G \to G$ using diagrams in \cite{lishak2}). Then we will use our knowledge of the ``effective'' structure of $\mu_v$ to complete the proof. This effective structure is somewhat explicated by the technical lemmas of the previous section.

\begin{defn}
For a presentations $\mu$, denote by $[\mu]$ the free group on the letters of the presentation.
\end{defn}

\begin{defn} \label{effectivemap}
For presentations $\mu, \mu'$, we say a group map $F: [\mu] \to [\mu']$ is $(L,N)$ (we assume $N>L$) if for a letter $a \in \mu$,  $F(a)$ has length $< L$, and for a relator $u$ of $\mu$, $Area_{\mu'}(F(u)) < N$. If in addition $Area_{\mu'}(F(a)) > M$ (possibly $F(a) \neq_{\mu'} 1$), then we say $F$ is $(L,N,M)$.
\end{defn}

\begin{rem}
One should think of an (L,N) map $F$ as an ``effective'' group homomorphism. 
\end{rem}

Let $v_1, v_2$ be words like in Lemma \ref{ch5:solitar}. Let $\mu_1 = \mu_{v_1}, \mu_2 = \mu_{v_2}$, $l=\max\{l(\mu_1), l(\mu_2)\}$. In this section we will prove several lemmas about an effective homomorphism from $\mu_1$ to $\mu_2$. We start with one and then simplify it until in the end we are able to prove that it exists only if $v_1=v_2$. Each modification of the homomorphism either would be trivial on the level of pseudogroups, or a composition with a simple automorphism of $\mu_2$: a conjugation or the transposition of the two factors of $\mu_2$.

\par

The next lemma is the first instant of the ``effectivized'' version of the Collins' argument described in the beginning of this section. We also have to adapt this argument to free products with amalgamation instead of HNN extensions.

\begin {lem} \label{ch5:aut1}
If $F: [\mu_1] \to [\mu_2]$ is $(L,N,M)$ (where $lN^3<E_n$), then there exists $F': [\mu_1] \to [\mu_2]$ of type $(3N^3,lN^3, M-lN^3)$ such that $F'(x) \in \langle x,y,t,s \rangle$.
\end{lem}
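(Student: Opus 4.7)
The plan is to implement an effective version of the Collins-style argument (cf. \cite{collins}, \cite{brunner}, \cite{lishak2}) adapted to the amalgamated free product $\mu_2 = H_{v_2} *_{s=\hat x,\,x=\hat s}\hat H_{v_2}$. The relation $x^y = x^2$ of $\mu_1$ forces
\[
Area_{\mu_2}\bigl(F(y)^{-1}F(x)F(y)\cdot F(x)^{-2}\bigr) < N,
\]
so $F(x)$ is effectively conjugate to $F(x)^2$ in $\mu_2$. Classically, conjugate elements of an amalgamated free product have the same cyclically reduced amalgamated length, whereas if $u$ is cyclically reduced of amalgamated length $\geq 2$ then $u^2$ is cyclically reduced of length $\approx 2|u|$; hence once $F(x)$ has been conjugated into cyclically reduced form its image must lie in a single factor of $\mu_2$. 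That conjugation is exactly what defines $F'$.

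The execution breaks into three steps. First, I would scan $F(x)$ from left to right and put it into effective amalgamated normal form: whenever a maximal syllable from one factor can be recognized, via Lemma \ref{ch5:hat2}, as lying in the amalgamated subgroup $\langle x,s\rangle = \langle \hat s, \hat x\rangle$ with area $<N$, I would absorb it into the neighbouring factor using the amalgamation identifications $s=\hat x$, $x = \hat s$. After at most $L$ such moves $F(x)$ is in amalgamated normal form; a further conjugation by a short prefix achieves cyclic reduction, and it is this last conjugator that is used to pass from $F$ to $F'$. Second, I would examine a van Kampen diagram $D$ of area $<lN^3$ over $\mu_2$ for $F'(y)^{-1}F'(x)F'(y)=F'(x)^2$: cutting $D$ along its $s$- and $\hat s$-bands partitions it into subdiagrams over $H_{v_2}$ and $\hat H_{v_2}$, each of which realizes an effective conjugacy of a power of $v_2w_n^{-1}v_2^{-1}w_n$, of $t$, or of their hatted counterparts. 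Were $F'(x)$ still to have amalgamated length $\geq 2$, Lemmas \ref{ch5:s2}, \ref{ch5:s4}, \ref{ch5:s5}, and \ref{ch5:hat2} would force one such subdiagram to have area $\geq E_n$, contradicting $lN^3 < E_n$. Hence $F'(x)$ lies in a single factor. Third, if that factor is $\hat H_{v_2}$, I would postcompose with the obvious involution of $\mu_2$ swapping the two factors ($x\leftrightarrow\hat x$, $y\leftrightarrow\hat y$, $t\leftrightarrow\hat t$, $s\leftrightarrow\hat s$), so as to land $F'(x)$ inside $\langle x,y,t,s\rangle$.

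The bounds $(3N^3, lN^3, M-lN^3)$ are then a bookkeeping exercise: conjugation by a word of length $O(L)\leq O(N)$ at worst triples letter lengths, the $O(L)$ normal-form reductions each cost area $O(N)$, and for each relator $u$ of $\mu_1$ the passage from $Area_{\mu_2}(F(u))$ to $Area_{\mu_2}(F'(u))$ is absorbed by the cubic slack; the decrease from $M$ to $M-lN^3$ records the area consumed when the conjugator is stripped off. I expect the main obstacle to be precisely the effectivization of Collins' lemma inside $\mu_2$: the classical normal-form theorem for amalgamated products has to be replaced by quantitative band surgery, and every test ``does this syllable lie in the amalgamated subgroup?'' requires an explicit area bound rather than a mere group-theoretic equality, which is exactly the role played by the effective Collins and HNN-conjugacy lemmas of Section \ref{ch5:lemmas} adapted to the Baumslag--Gersten setting.
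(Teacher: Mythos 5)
Your proposal aims at the right high-level target (an effective Collins-style syllable-length argument for the amalgamated product), but as written it diverges from the paper's proof and contains genuine gaps.

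First, the strategy is inverted. The paper does \emph{not} begin by placing $F(x)$ in amalgamated normal form and then deriving a contradiction from length comparison. Instead it reads off conjugating shortcuts directly from the van Kampen diagram for $F(x)^{F(y)}=F(x)^2$: a pigeonhole count of how the regions over $H_{v_2}$ and $\hat H_{v_2}$ meet the two boundary arcs shows that either some region has all its boundary pieces on the $F(x)^2$-arc, or some region meets the $F(x)^2$-arc in two pieces; either way one obtains a short word $k$ and a shorter word $w=_{\mu_2}k^{-1}F(x)k$ with $\hat l(w)<\hat l(F(x))$, and one then conjugates the whole map. Iterating $\leq L$ times and possibly composing with the hat-swap gives $F'$. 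No normalization step is needed, and no lemma from Section~\ref{ch5:lemmas} is invoked.

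Second, your stage 1 is not carried by anything you cite. Absorbing a syllable requires \emph{recognizing} that it lies in the amalgamated subgroup $\langle x,s\rangle$ (equality in $H_{v_2}$, with area control), but Lemma~\ref{ch5:hat2} is a statement about effective conjugacy of words already in $\langle s,x\rangle$, not a membership test. If a syllable lies in $\langle x,s\rangle$ without your being able to certify it, your word is not in normal form, and the stage-2 length comparison silently fails.

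Third, stage 2 misidentifies the geometry. The diagram over $\mu_2$ is cut into $H_{v_2}$- and $\hat H_{v_2}$-regions by the \emph{amalgamation cells} (those with boundary $s\hat x^{-1}$ or $x\hat s^{-1}$), not by $s$- or $\hat s$-bands; the latter live entirely inside a single factor (they come from the HNN structure of $H_{v_2}$ over $G$) and do not separate the two factors. Moreover the boundary between two regions is an arbitrary word in $\langle x,s\rangle=\langle\hat s,\hat x\rangle$; there is no reason it should be ``a power of $vw_n^{-1}v^{-1}w_n$, of $t$, or of their hatted counterparts,'' so Lemmas~\ref{ch5:s2}, \ref{ch5:s4}, \ref{ch5:s5} do not apply in the way you suggest. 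Those lemmas enter only in the later reductions (Lemmas~\ref{ch5:aut2}--\ref{ch5:aut10}), once enough of $F$ has already been pushed into a single factor.
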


\begin{proof}
Diagrams over any free product with amalgamation consist of subdiagrams of cells from one or the other factor (regions), bounded by the mixed cells (the cells responsible for amalgamation). In the case of $\mu_2$ we have regions made up from $H_{v_2}$ cells and regions from $\hat H_{v_2}$ bounded by the cells corresponding to $s=\hat x, x = \hat s$. Therefore the pure regions are bounded by words in $\langle x,s \rangle$ (or $\langle \hat x, \hat s \rangle$) and pieces of the boundary of the diagram. 

Consider a diagram for $F(x)^{F(y)} = (F(x))^2$ of area less than $N$. Our goal is to find $F'$ such that $\hat l(F'(x)) = 1$. Suppose $\hat l(F(x)) \geq 2$, then $\hat l(F^2(x)) \geq 4$. Consider the regions of the diagram in $H_{v_2}$. In total they have twice as many boundary pieces on $F^2(x)$ than on $F(x)$. Therefore there either exists a ``no-hat'' region with boundary on $F^2(x)$ only (case 1), or a region with more than one boundary piece on $F^2(x)$ (case 2), see Figure \ref{ch5:fig:freeproduct}. In case 1 we have the equality of a no-hat piece of the boundary of the diagram to a word in $\langle \hat x, \hat s \rangle$, in case 2 we have the equality of a mixed piece (the part of the boundary between some two no-hat pieces) to a word in $\langle x, s \rangle$. In both cases we can find a word $w =_{\mu_2} k^{-1}F(x)k$ such that $Area_{\mu_2}(w^{-1}k^{-1}F(x)k) <N$, the length of $k$ is less than $L$, the length of $w$ is less than $L+N$ ($N$ is a bound for the length of the words in $\langle \hat x, \hat s \rangle$ or $\langle x, s \rangle$ -- the boundaries between pure regions), and $\hat l(w) < \hat l(F(x))$. Thus, we define $F^{(1)}$ of type $(L+N+L, N+lN, M-N)$ to be $F^{(1)}(x) = w$, for $a \neq x$ define $F^{(1)}(a) = k^{-1}F(a)k$. We have $\hat l(F^{(1)}(x)) < \hat l(F(x))$.

We can continue in this way defining $F^{(2)}$, $F^{(3)}$, etc. Until $\hat l(F^{(m)}(x))=1$ for some $m \leq \hat l(F(x)) \leq L$. We can estimate $F^{(m)}$ to be $(L+N+4N+7N+10N+...+(3L+1)N = L+LN+3N(L+1)L/2 < 3NL^2, lNL^2)$ or $(3N^3,lN^3)$. Define $F'$ to be either $F^{(m)}$ or $\hat F \circ F^{(m)}$, where $\hat F$ is the automorphism on $[\mu_2]$ interchanging $H_{v_2}$ with $\hat H_{v_2}$.
\end{proof}

\begin{figure}[h]
\centering
\includegraphics[scale=0.7]{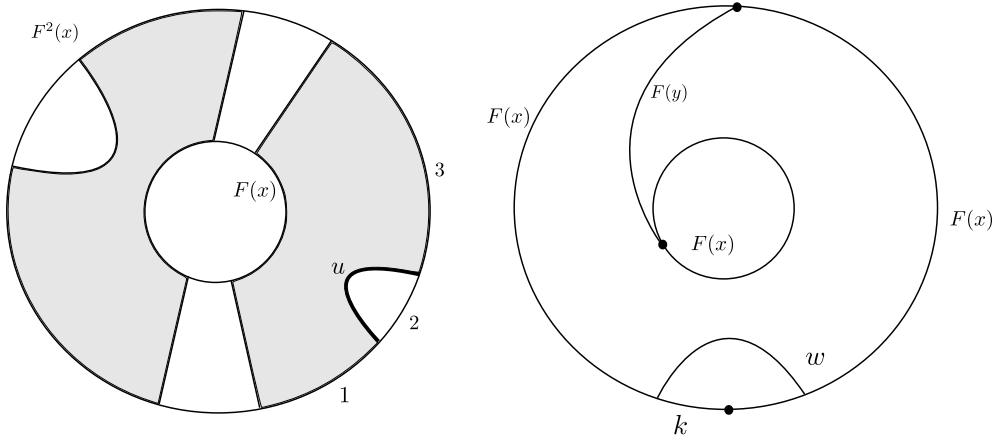}
\caption{Two annular van Kampen diagrams for $F(x)^{F(y)} = (F(x))^2$. The left diagram is an example of $\hat l(F(x)) = 4$. If the white regions are the no-hat regions, and the grey ones are hat regions,  then it represents case 1: the piece of the boundary marked by $2$ can be replaced by $u$, thus reducing $\hat l(F(x))$. If, on the other hand, the grey regions are no hat regions, then we can see case 2: one of the two grey regions has two pieces on the outer boundary (piece 1 and piece 2), they are connected by a piece $u$ that can be used to replace piece $2$. On the right diagram we see what happens if this reduction goes over the point of concatenation of two $F(x)$ -- we can conjugate by $k$ to recover $F(x)$ from $w$.}
\label{ch5:fig:freeproduct}
\end{figure}

\begin {lem} \label{ch5:aut2}
If $F: [\mu_1] \to [\mu_2]$ is $(L,N,M)$ (where $lN^3<E_n$) is such that $F(x)$ is a word in $H_{v_2}$, then there exists $F': [\mu_1] \to [\mu_2]$ such that $F'(x), F'(y)$ are in $H_v$, where $F'$ is $(N^3,lN^3, M-lN^3)$.
\end{lem}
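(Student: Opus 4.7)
The plan is to run the same argument as in Lemma \ref{ch5:aut1}, but with the roles of $x$ and $y$ reversed: the hypothesis $F(x)\in H_{v_2}$ already pins the image of $x$ to a single factor of $\mu_2$, and I want to use the $\mu_1$-relation $x^y = x^2$ to force $F(y)$ into the same factor. Applying $F$ to this relation yields a van Kampen diagram $D$ of area $<N$ with boundary word $F(y)^{-1}F(x)F(y)F(x)^{-2}$; since the boundary arcs labeled $F(x)$ and $F(x)^{-2}$ lie entirely in $H_{v_2}$, every $\hat H_{v_2}$-region of $D$ can meet the boundary only along $F(y)^{\pm 1}$.

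I would decompose $D$ into maximal pure $H_{v_2}$- and $\hat H_{v_2}$-subregions separated by amalgamation cells ($s=\hat x$ and $x=\hat s$). Assuming $\hat l(F(y))\ge 2$, each of $F(y)$ and $F(y)^{-1}$ contributes at least one hat syllable, so the hat regions collectively carry at least $2\hat l(F(y))-2$ boundary pieces on these two arcs. By the same pigeonhole count as in Lemma \ref{ch5:aut1}, there is either a hat region with two boundary arcs on the same $F(y)^{\pm 1}$, or a hat region touching both $F(y)$ and $F(y)^{-1}$. In either case a hat syllable of $F(y)$ is shown, up to conjugation by a word $k$ of length $<L$, to be equal in $\hat H_{v_2}$ to a word of length $<L+N$ lying in the amalgamating subgroup $\langle\hat x\rangle=\langle s\rangle$ or $\langle\hat s\rangle=\langle x\rangle$. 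Substituting this shorter word and simultaneously conjugating the rest of $F$ by $k$ produces $F^{(1)}$ with $F^{(1)}(x)$ still in $H_{v_2}$ and $\hat l(F^{(1)}(y))<\hat l(F(y))$. Iterating at most $L$ times drives $\hat l$ down to $1$; the length- and area-bookkeeping is identical to that in Lemma \ref{ch5:aut1} and gives the asserted parameters $(N^3,\,lN^3,\,M-lN^3)$ for the final map $F^{(m)}$, using $L<N$.

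The main obstacle, which is absent in Lemma \ref{ch5:aut1}, is handling the last syllable. If $F^{(m)}(y)\in H_{v_2}$ we are done immediately; if $F^{(m)}(y)$ is a short power of $\hat x$ or $\hat s$, it equals a word in $H_{v_2}$-letters via one amalgamation cell per letter and we simply rewrite, again producing the desired $F'$. What must be ruled out is the remaining possibility that $F^{(m)}(y)\in\hat H_{v_2}$ but does not lie in any amalgamating subgroup: in that case the relation
\[
F^{(m)}(y)^{-1}\,F(x)\,F^{(m)}(y)\;=\;F(x)^2
\]
of area $<lN^3$ in $\mu_2$ would, after contracting the pure $H_{v_2}$-part of $D^{(m)}$ around $F(x)$ and $F(x)^{-2}$, produce an annular $\hat H_{v_2}$-subdiagram of area $<lN^3<E_n$ whose boundary reads a nontrivial word in $\langle\hat x,\hat s\rangle$, contradicting the symmetric hatted copy of Lemma \ref{ch5:hat1} (or Lemma \ref{ch5:hat2}). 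Verifying this contradiction cleanly -- in particular checking that the effective machinery from Section \ref{ch5:lemmas} transfers through the amalgamation and that the hypothesis $Area_{\mu_2}(F(x))>M$ with $M>lN^3$ is genuinely used -- is the technical heart of the proof and where I expect most of the work to lie.
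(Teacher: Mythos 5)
Your high-level plan (use the relation $x^y=x^2$ to pull $F(y)$ into $H_{v_2}$, then iterate) is the right one, but the route you take is not the paper's, and it has a real gap. The paper's proof does not use the boundary-piece pigeonhole from Lemma \ref{ch5:aut1}. Instead, it makes a crucial structural observation that you skip: by Lemma \ref{ch5:hat1} there are \emph{no topologically trivial pure regions} in the annular diagram for $F(x)^{F(y)}=F(x)^2$, so the diagram is a union of concentric annular regions of alternating type. Since both boundary circles $F(x)$ and $F(x)^2$ lie in $H_{v_2}$, the innermost and outermost annuli are no-hat, so the number of annuli is odd and equals $\hat l(F(y))$. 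This is exactly why $\hat l(F(y))>1$ forces $\hat l(F(y))\ge 3$, and it is what lets the argument terminate with $F'(y)\in H_{v_2}$ automatically (the residual case you worry about at the end cannot arise). Your replacement count ``$2\hat l(F(y))-2$ hat boundary pieces'' is already wrong for $\hat l(F(y))=3$ with no-hat syllables at both ends (you get $2$, not $4$), and without the concentric-annuli picture the count doesn't pin down the geometry.

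The more serious gap is in how you propose to shorten $F(y)$. In your ``hat region touching both $F(y)$ and $F(y)^{-1}$'' case the hat region is itself an annulus; it does \emph{not} directly exhibit the hat syllable $g$ of $F(y)$ as an amalgamating word. What it gives you is a conjugation $g\,r_1\,g^{-1}=r_2$ in $\hat H_{v_2}$ with $r_1,r_2$ in $\langle\hat x,\hat s\rangle$ and area $<N$, and deducing that $g$ is (effectively) amalgamating is precisely the content of Lemma \ref{ch5:hat2}. You never invoke it. Moreover, Lemma \ref{ch5:hat2} has a second alternative, $r_1=x^i$, $r_2=x^j$, in which $g$ is \emph{not} shown to be amalgamating and $\hat l(F(y))$ does \emph{not} drop; the paper handles this by conjugating $F$ so that $F^{(1)}(x)=x^j$ and then arguing that, with $F(x)$ a pure power of $x$, this alternative cannot recur (since $x^j$ is not conjugate to $s^k$ in $H_{v_2}$). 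That one-time exception is what keeps the iteration terminating in $\le L$ steps with the stated $(N^3,lN^3,M-lN^3)$ bounds; your bookkeeping implicitly assumes every step succeeds. Finally, in the closing paragraph your proposed contradiction from ``the symmetric hatted copy of Lemma \ref{ch5:hat1}'' is misapplied: the boundary words of a hat annulus are not null-homotopic loops, so Lemma \ref{ch5:hat1} gives nothing; again the right tool is Lemma \ref{ch5:hat2}, and in the paper's concentric-annuli setup the case never even arises. (Also, a small but telling slip: the amalgamating subgroup of $\mu_v=H_v*_{s=\hat x,\,x=\hat s}\hat H_v$ is the rank-$2$ free group $\langle s,x\rangle\cong\langle\hat x,\hat s\rangle$, not two separate cyclic subgroups $\langle s\rangle$ and $\langle x\rangle$ as your phrasing suggests.)
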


\begin {proof}
Consider a diagram for $F(x)^{F(y)} = (F(x))^2$ of area less than $N$. Our goal is (as in the previous lemma) to decrease $\hat l(F(y))$ to $1$. Observe that by applying Lemma \ref{ch5:hat1} we see that there are no topologically trivial regions on the diagram. Therefore, this diagram looks like concentric circles of alternating type (hat, no-hat), see Figure \ref{ch5:fig:freeproduct2}. Since both components of the boundary are in $H_{v_2}$, if there is more than one circle (i.e. $\hat l(F(y)) > 1$), there has to be more than two ($\hat l(F(y)) > 2$). This gives us a subdiagram (the second annulus) to apply Lemma \ref{ch5:hat2}. Then if $g$ from the conclusion of the lemma is a word in $\langle s,x \rangle$ we can obtain $F^{(1)}$ having $\hat l(F^{(1)}(y)) = \hat l(F(y)) - 2$, or if the conclusion is $r_1 = r_2 = x^j$ we can define $F^{(1)}$ (by conjugating by $g_1 \in H_{v_2}$) such that $F^{(1)}(x) = x^j$. In both cases $F^{(1)}$ is $(L+N+L, N+lN, M-N)$.

Now we apply this process to $F^{(1)}$, but we notice that if $F^{(1)}(x) = x^j$ the conclusion of the Lemma \ref{ch5:hat2} can not be of the second case, because $x^j$ is not conjugate to $s^k$ in $H_{v_2}$ (the conjugation corresponding to the first annulus). Therefore, an application of Lemma \ref{ch5:hat2} will always decrease $\hat l(F(y))$ by $2$ except possibly once. Similarly to the previous lemma we obtain $F'$ of type $(N^3,lN^3, M-lN^3)$.
\end {proof}

\begin{figure}[h]
\centering
\includegraphics[scale=0.7]{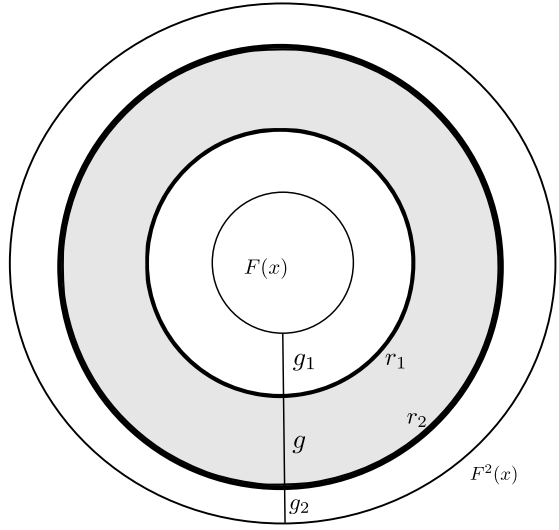}
\caption{A possible diagram fo $F(x)^{F(y)} = (F(x))^2$ with $\hat l(F(y))=3$. Here $g_1 g g_2 = F(y)$. We apply Lemma \ref{ch5:hat2} to $g r_2 g^{-1} = r_1$.}
\label{ch5:fig:freeproduct2}
\end{figure}

\begin {lem} \label{ch5:aut3}
If $F: [\mu_1] \to [\mu_2]$ is $(L,N,M)$ ($lN^3<E_n$) is such that $F(x), F(y)$ are words in $H_{v_2}$, then there exists $F': [\mu_1] \to [\mu_2]$ such that $F'(x), F'(y),F'(t)$ are in $H_{v_2}$, where $F'$ is $(N^3,lN^3, M-lN^3)$. 
\end{lem}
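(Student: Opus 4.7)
The plan is to mirror the proof of Lemma~\ref{ch5:aut2} applied to the relation $x^t = y$ (equivalently, $F(t)^{-1}F(x)F(t) = F(y)$) instead of the relation $x^y = x^2$. I consider an annular van Kampen diagram $D$ for $F(t)^{-1}F(x)F(t)F(y)^{-1} =_{\mu_2} 1$ of area less than $N$, with outer boundary labeled $F(x)$, inner boundary labeled $F(y)$, and a cut of $D$ along a path labeled $F(t)$. By hypothesis both boundary cycles lie in $H_{v_2}$. By Lemma~\ref{ch5:hat1} no $\hat H_{v_2}$-subregion of $D$ can have trivial boundary in $\hat H_{v_2}$ through area $<E_n$, so all $\hat H_{v_2}$-regions are annular and separate $D$ into concentric rings alternating between $H_{v_2}$- and $\hat H_{v_2}$-type, with the outermost and innermost rings of $H_{v_2}$-type. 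The goal is to reduce $\hat l(F(t))$ to $1$ with $F(t) \in H_{v_2}$.

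Suppose $\hat l(F(t)) \geq 2$. Then $F(t)$ crosses at least one $\hat H_{v_2}$-ring; let $A$ be the outermost such ring. Its two boundary curves are words $\hat r_1, \hat r_2 \in \langle \hat s, \hat x\rangle$, and the piece $\hat g$ of $F(t)$ inside $A$ realizes a conjugation $\hat g\,\hat r_1\,\hat g^{-1} =_{\hat H_{v_2}} \hat r_2$ through a subdiagram of area $<N$. The $\hat H_{v_2}$-analogue of Lemma~\ref{ch5:hat2} then yields either (i)~$\hat g$ is equal in $\hat H_{v_2}$, through area $<N$, to a word in $\langle \hat s, \hat x\rangle = \langle s, x \rangle$, which lies in the amalgamated subgroup and so can equally be viewed as an $H_{v_2}$-word; or (ii)~both $\hat r_1, \hat r_2$ are powers of $\hat x = s$. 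In case (i) I rewrite the corresponding piece of $F(t)$ by its $\langle s,x \rangle$ representative; this piece merges with its two neighbouring $H_{v_2}$-pieces of $F(t)$, so $\hat l(F(t))$ drops by at least $2$, and the length/area increase is $O(N)$, respectively $O(lN)$.

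In case (ii) the outer $H_{v_2}$-ring is bounded by $F(x)$ on the outside and $s^i$ on the inside, and a piece $g'$ of $F(t)$ inside this ring realizes $g' F(x) (g')^{-1} =_{H_{v_2}} s^i$ through area $<N$. Conjugating $F$ by $g'$ gives $F^{(1)}$ with $F^{(1)}(x) = s^i$ as a word; the conjugator $g'$ lies in $H_{v_2}$, so the property that $F^{(1)}(x), F^{(1)}(y) \in H_{v_2}$ is preserved. Exactly as in the proof of Lemma~\ref{ch5:aut2}, on the next iteration case (ii) cannot recur: were it to recur, applying Lemma~\ref{ch5:hat2} to the now-simple outer $H_{v_2}$-ring (whose outer boundary is the literal word $s^i$) would force both its boundaries to be powers of $x$, contradicting that $s^i$ is not equal in $H_{v_2}$ to a power of $x$ through small area. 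Iterating at most $\hat l(F(t)) \leq L$ times produces $F^{(m)}$ with $\hat l(F^{(m)}(t)) = 1$; the degenerate subcase $F^{(m)}(t) \in \hat H_{v_2}$ forces $F^{(m)}(x), F^{(m)}(y)$ into the amalgamated subgroup, at which point $F^{(m)}(t)$ can be rewritten as an $H_{v_2}$-word at the cost of one extra $lN$ in area. Tracking parameters as in Lemma~\ref{ch5:aut2} yields $F'$ of type $(N^3, lN^3, M - lN^3)$.

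The main obstacle is the case (ii) step: one has to check that the conjugation normalizing $F(x)$ to a power of $s$ does not push any relator outside its area budget, and that the ``power of $s$ is not a power of $x$'' argument used to block case (ii) on subsequent iterations really gives an area-controlled application of Lemma~\ref{ch5:hat2} to an adjacent $H_{v_2}$-ring. This is the direct analogue of the corresponding step in Lemma~\ref{ch5:aut2} and is the place that requires the full strength of the effective small cancellation results of Section~\ref{ch5:lemmas}.
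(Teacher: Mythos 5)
Your proposal takes essentially the same route as the paper: the paper's proof simply declares the argument completely analogous to that of Lemma~\ref{ch5:aut2}, replacing the relation $x^y=x^2$ by $x^t=y$ (so the diagram is for $F(x)^{F(t)}=F(y)$), and notes that the conjugation needed to handle case (ii) of Lemma~\ref{ch5:hat2} is by an element of $H_{v_2}$ and therefore does not disturb the assumption that $F(x),F(y)\in H_{v_2}$. You have spelled out exactly that analogy (concentric rings via the $\hat H_{v_2}$-version of Lemma~\ref{ch5:hat1}, reduction of $\hat l(F(t))$ via Lemma~\ref{ch5:hat2}, $H_{v_2}$-conjugation in the exceptional case), so this matches the intended proof.
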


\begin {proof}
The proof is completely analogous to the proof of Lemma \ref{ch5:aut2}, the only difference is we consider the diagram for $F(x)^{F(t)} = F(y)$. Note that the only thing we used about a diagram for $F(x)^{F(y)} = (F(x))^2$ in Lemma \ref{ch5:aut2} is that its boundary is in $H_{v_2}$. As in the proof of Lemma \ref{ch5:aut2} we might need to conjugate $F$ to deal with the possible conclusion of Lemma \ref{ch5:hat2} not resulting in the decrease of $\hat l(F(t))$, but we conjugate by something in $H_{v_2}$ thus not interfering with $F(x), F(y)$ being in $H_{v_2}$.

\end {proof}

\begin {lem} \label{ch5:aut4}
If $F: [\mu_1] \to [\mu_2]$ is $(L,N,M)$ ($lN^3<E_n$) is such that $F(x), F(y), F(t)$ are words in $H_{v_2}$, then there exists $F': [\mu_1] \to [\mu_2]$ such that $F'(x), F'(y), F'(t),F'(s)$ are in $H_{v_2}$, where $F'$ is $(N^3,lN^3, M-lN^3)$. 
\end{lem}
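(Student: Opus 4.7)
The plan is to repeat the concentric-annuli argument of Lemmas \ref{ch5:aut2} and \ref{ch5:aut3}, but applied to the fourth relator of the $H_{v_1}$ factor, which (after conjugating) is $(v_1 w_n^{-1} v_1^{-1} w_n)^s = t$. Evaluating $F$ yields the conjugation relation $F(v_1 w_n^{-1} v_1^{-1} w_n)^{F(s)} = F(t)$ holding through area at most $N$ in $\mu_2$. Since $F(x), F(y), F(t) \in H_{v_2}$ by hypothesis, both $F(v_1 w_n^{-1} v_1^{-1} w_n)$ (a word in these three letters) and $F(t)$ are words in $H_{v_2}$; hence the two boundary components of the associated annular van Kampen diagram both lie in the no-hat factor.

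As in the proof of Lemma \ref{ch5:aut2}, we apply Lemma \ref{ch5:hat1} to rule out topologically trivial (purely hat or purely no-hat) subregions: such a region would produce a nontrivial word in $\langle \hat x,\hat s\rangle$ or $\langle x,s\rangle$ equal to $1$ through area less than $lN^3 < E_n$, contradicting Lemma \ref{ch5:hat1}. Therefore the diagram decomposes into a stack of concentric annuli of alternating hat/no-hat type, with both extremal annuli of no-hat type. If $\hat l(F(s)) > 1$ then at least one hat annulus is present, giving an inner annular subdiagram to which Lemma \ref{ch5:hat2} applies. Depending on the conclusion of that lemma, we either (a) conjugate $F$ by an element of $H_{v_2}$ to reduce $\hat l(F(s))$ by $2$, or (b) rewrite $F(s)$ using the second alternative ($r_1, r_2 = x^i, x^j$). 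The conjugating element lives in $H_{v_2}$, so conjugation preserves the property that $F(x), F(y), F(t) \in H_{v_2}$.

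Iterating produces a sequence $F^{(1)}, F^{(2)}, \dots$; as in Lemma \ref{ch5:aut3}, after the first (possible) occurrence of case (b) we have $F^{(k)}(s)$ a power of $x$, and subsequent reductions cannot be of case (b) because $x^j$ is not conjugate to $s^k$ in $H_{v_2}$. The process terminates in at most $L$ steps with $\hat l(F'(s)) = 1$, and tracking parameters exactly as in the proofs of Lemmas \ref{ch5:aut2}, \ref{ch5:aut3} gives the type $(N^3, lN^3, M - lN^3)$.

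The main obstacle I expect is making sure the final $F'(s)$ lies in the correct factor $H_{v_2}$ rather than $\hat H_{v_2}$: here we cannot appeal to the factor-swapping automorphism as in Lemma \ref{ch5:aut1}, because that would move $F'(x), F'(y), F'(t)$ out of $H_{v_2}$. This is forced, however, by the shape of the boundary: the diagram's outermost annulus is no-hat, so the concatenation segments of $F(s)$ cross an even number of amalgamating $s = \hat x$, $x = \hat s$ cells, and once $\hat l(F'(s)) = 1$ the single remaining syllable necessarily sits in the no-hat factor $H_{v_2}$.
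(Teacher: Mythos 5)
Your proof matches the paper's approach, which is stated in one line: ``We proceed as in Lemma \ref{ch5:aut3} by considering a diagram for $(v w_n^{-1} v^{-1} w_n)^{F(s)} = F(t)$.'' Your elaboration of the concentric-annuli argument, the iterated application of Lemma \ref{ch5:hat2}, and the parity observation forcing $F'(s)$ into the correct factor $H_{v_2}$ is exactly what that reference to Lemma \ref{ch5:aut3} (and hence Lemma \ref{ch5:aut2}) unpacks to, and you correctly insert the $F$ that the paper's notation elides.
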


\begin {proof}
We proceed as in Lemma \ref{ch5:aut3} by considering a diagram for $(v w_n^{-1} v^{-1} w_n) ^{F(s)} = F(t)$.
\end {proof}

\begin {lem} \label{ch5:aut5}
If $F: [\mu_1] \to [\mu_2]$ is $(L,N,M)$ ($lN^3<E_n$) is such that $F(x), F(y), F(t), F(s)$ are words in $H_{v_2}$, then there exists $F': [\mu_1] \to [\mu_2]$ of type $(lN^3, lN^3,M-lN^3)$ such that $F'(x)$ is in $G$, while $F'(y), F'(t), F'(s)$ are in $H_{v_2}$.
\end{lem}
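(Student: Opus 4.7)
My plan is to mimic the inductive structure of Lemmas \ref{ch5:aut2}--\ref{ch5:aut4}, but one ``level'' deeper. Those lemmas worked in $\mu_2 = H_{v_2} *_{s=\hat x,\, x=\hat s} \hat H_{v_2}$ viewed as an amalgamated free product, whereas here I will work in $H_{v_2}$ viewed as the HNN-extension of $G$ with stable letter $s$ and associated subgroups $\langle t\rangle$ and $\langle v_2 w_n^{-1} v_2^{-1} w_n\rangle$. The role of the ``mixed'' amalgamation cells is now played by $s$-bands, the role of $\hat l$ is played by $l_s$, and the role of Lemmas \ref{ch5:hat1}--\ref{ch5:hat2} is played by Lemmas \ref{ch5:s2} and \ref{ch5:s4}. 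The natural relation to exploit is once again $x^y = x^2$, which under $F$ becomes $F(x)^{F(y)} = F(x)^2$ and which now, given the hypothesis, can be realized by a diagram that is entirely inside the no-hat factor $H_{v_2}$.

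Concretely, I take a van Kampen diagram $D$ of area less than $N$ for $F(x)^{F(y)} = F(x)^2$ in $H_{v_2}$. The $s$-cells assemble into $s$-bands whose complement is a union of $G$-subdiagrams. First I rule out non-trivial circular $s$-bands: such a band would enclose a $G$-subdiagram bounded by a nonzero power of $t$ or of $v_2 w_n^{-1} v_2^{-1} w_n$, which by the ``furthermore'' clause of Lemma \ref{ch5:s2} together with the infinite order of $t$ in $G$ forces area $\geq E_n$, contradicting $lN^3 < E_n$. Hence every $s$-band is non-circular with both endpoints on $\partial D$.

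Next, viewing $D$ as an annular diagram with inner boundary $F(x)$, outer boundary $F(x)^2$ and cut $F(y)$, I assume $l_s(F(x)) > 0$. The absence of trivial interior $G$-regions forces a concentric alternating structure of $G$-regions and $s$-bands, and by the same pigeonhole-style argument as in the proof of Lemma \ref{ch5:aut2} there must be at least one innermost sub-annulus to which Lemma \ref{ch5:s4} can be applied. That lemma either tells us the conjugator is itself a $t$-power (giving an identification that collapses the innermost band and strictly decreases $l_s(F(x))$ after one $H_{v_2}$-conjugation), or it gives $A = B$ with $i = \pm j$, so that a pair of consecutive $s$-letters on $F(x)$ becomes cancellable after conjugating by a word in $H_{v_2}$. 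In either case we obtain an intermediate $F^{(1)}$ of type $(L + 2N,\, N + lN,\, M - N)$ with $l_s(F^{(1)}(x)) < l_s(F(x))$, while $F^{(1)}(y), F^{(1)}(t), F^{(1)}(s)$ remain in $H_{v_2}$ because the conjugator lies in $H_{v_2}$. Iterating at most $L$ times and using $L \leq N^3$ from the previous lemmas yields $F'$ of the claimed type $(lN^3,\, lN^3,\, M - lN^3)$.

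The main obstacle will be the innermost case analysis: one must check that in every configuration the conjugation produced by Lemma \ref{ch5:s4} genuinely reduces $l_s(F(x))$, and that when \ref{ch5:s4} produces only a partial reduction (its ``$g =_G t^k$'' clause) the iteration still terminates in linearly many steps, mirroring the ``except possibly once'' subtlety in the proof of Lemma \ref{ch5:aut2}. A secondary technical point will be to verify that the $G$-subdiagram adjacent to the innermost $s$-band really satisfies the hypotheses of Lemma \ref{ch5:s4} (both boundary labels being powers of associated-subgroup generators, within the required $E_{n-1}$-area budget), which is where the bookkeeping $lN^3 < E_n$ in the statement is used.
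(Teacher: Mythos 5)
The paper's proof of this lemma is a one‑liner: ``We mimic Lemma \ref{ch5:aut1}, with $s$-bands instead of regions,'' with the extra remark that each reduction step can increase $l(F(x))$ by up to $l$ (the length of the $s$‑band label $v w_n^{-1} v^{-1} w_n$), which is why $lN^3$ replaces $3N^3$ in the final type. You instead propose to mimic Lemma~\ref{ch5:aut2}, invoking a concentric alternating annulus structure and Lemma~\ref{ch5:s4}. This is the wrong template, and the mismatch is a real gap, not merely cosmetic.

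The concentric picture in Lemma~\ref{ch5:aut2} (and again in Lemma~\ref{ch5:aut6}) is available precisely because the inner and outer boundary circles carry no mixed/$s$‑letters: in Lemma~\ref{ch5:aut2} we have $\hat l(F(x))=1$, so $F(x)$ and $F(x)^2$ sit entirely inside one factor, and the hat/no‑hat regions are forced to nest concentrically around the hole; the conjugator $F(y)$ then crosses each annulus once, and one reduces $\hat l(F(y))$. That is an argument for shortening the \emph{conjugator}. Here, by contrast, you are trying to shorten the \emph{conjugated element} $F(x)$, and the premise $l_s(F(x))>0$ exactly destroys the concentric picture: $s$‑letters now lie on the inner boundary $F(x)$ and the outer boundary $F(x)^2$, so the $s$‑bands emanating from them must terminate on the boundary rather than wrap around the annulus. ``The absence of trivial interior $G$-regions forces a concentric alternating structure'' is therefore false in this situation, and the subsequent appeal to ``an innermost sub-annulus to which Lemma~\ref{ch5:s4} can be applied'' has nothing to latch onto. (Your invocation of a ``pigeonhole-style argument as in the proof of Lemma~\ref{ch5:aut2}'' is also a misattribution: the pigeonhole/counting step lives in Lemma~\ref{ch5:aut1}.)

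What is actually needed is the region‑counting argument of Lemma~\ref{ch5:aut1}, transplanted one level down: take the disc diagram for $F(y)^{-1}F(x)F(y)F(x)^{-2}$ over $H_{v_2}$; since (as you correctly argue) there are no circular $s$-bands, the $s$-bands cut the diagram into $G$-regions; the arcs of the boundary lying on $F(x)^2$ between consecutive $s$-band feet outnumber (roughly two-to-one) those on $F(x)$, so some $G$-region either has all its boundary arcs on $F(x)^2$ or has at least two arcs there, and in either case a chord inside that $G$-region (hence a word in $G$ of length $<l N$) replaces a piece of $F(x)$ containing an $s^{\pm1}$ after a conjugation, dropping $l_s(F(x))$ by at least one. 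Iterate as in Lemma~\ref{ch5:aut1}, keeping track of the fact that the replaced chord can be as long as an $s$-band side, i.e.\ up to $\sim l$; this is precisely where $l N^3$ (rather than $3N^3$) comes from. Your bookkeeping of the type $(lN^3, lN^3, M-lN^3)$, the use of $L<N$, and the elimination of circular $s$-bands via Lemma~\ref{ch5:s2} are all fine; it is the structural reduction step — Lemma~\ref{ch5:aut1}'s, not Lemma~\ref{ch5:aut2}'s — that needs to be substituted. Lemma~\ref{ch5:s4} and the concentric annulus picture enter only at the next stage (Lemma~\ref{ch5:aut6}), once $F(x)$ is already in $G$.
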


\begin {proof}
We mimic Lemma \ref{ch5:aut1}, with $s$-bands instead of regions. Consider a diagram for $F(x)^{F(y)} = (F(x))^2$ of area less than $N$. If $l_s(F(x))>0$ then there is an $s$-band attached to the $(F(x))^2$ part of the boundary. One difference is that the length of $F(x)$ can grow faster -- by the length of $v w_n^{-1} v^{-1} w_n$ which is less than $l$.

\end {proof}

\begin {lem} \label{ch5:aut6}
If $F: [\mu_1] \to [\mu_2]$ is $(L,N, M)$ ($N < E_{n-1}$) is such that $F(y), F(t), F(s)$ are words in $H_{v_2}$ and $F(x)$ is in $G$, then $F(y) \in G$.
\end{lem}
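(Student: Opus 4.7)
The plan is to analyze a van Kampen diagram $D$ over $H_{v_2}$ of area less than $N$ realizing the relation $F(y)^{-1} F(x) F(y) = F(x)^2$, and to exploit the hypothesis $F(x) \in G$ to force every $s$-band of $D$ to be trivial. Because $F(x)$ contains no $s^{\pm 1}$ letters, the $F(x)$ and $F(x)^{-2}$ arcs of $\partial D$ are $s$-free, so every $s$-band of $D$ has both endpoints on the two $F(y)^{\pm 1}$ arcs; by the argument of Lemma \ref{ch5:hat1}, there are also no circular $s$-bands. Writing $F(y) = U_0 s^{e_1} U_1 \cdots s^{e_k} U_k$ with $U_i \in G$, my goal is to show $k$ can be reduced to $0$.

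The core step is to pick an innermost $s$-band $\beta$ in the non-crossing matching of boundary $s$-edges and show $\beta$ has length zero. If $\beta$ pairs two adjacent $s$-edges inside the same arc $F(y)$ (or $F(y)^{-1}$), say $s^{e_i}$ and $s^{e_{i+1}}$ with the $G$-subword $U_i$ between them, then $\beta$ and $s^{e_i} U_i s^{e_{i+1}}$ bound a $G$-subdiagram of area less than $N < E_{n-1}$ which certifies that $U_i$ equals (in $G$) a power of $t$ or of $v_2 w_n^{-1} v_2^{-1} w_n$; Lemma \ref{ch5:s2} forces that power to be trivial, so $\beta$ has length zero and the two $s^{\pm 1}$ letters cancel freely in $F(y)$. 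If instead $\beta$ connects an $s$-edge of $F(y)$ to one of $F(y)^{-1}$ across $F(x)^{\pm 1}$, then the subdiagram cut off by $\beta$ on the $F(x)$-side is a $G$-diagram of area less than $E_{n-1}$ with boundary of the form $g\, u_1\, g^{-1}\, u_2$, where $u_1, u_2$ are powers of $t$ or of $v_2 w_n^{-1} v_2^{-1} w_n$; Lemma \ref{ch5:s4} then forces $\beta$ to have length zero as well.

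Iterating this peeling strictly decreases the number of $s^{\pm 1}$ letters in $F(y)$ at each step while adding only bounded area, so eventually $F(y)$ is freely equivalent in $[\mu_2]$ to a word in the $G$-letters, which is the desired conclusion $F(y) \in G$. The main obstacle I expect is the sign-and-orientation case analysis for the innermost band: one must check that whichever of the two pinch types arises (a $t$-pinch or a $(v_2 w_n^{-1} v_2^{-1} w_n)$-pinch, depending on the signs $e_i, e_{i+1}$ and the orientation of $\beta$), the subword trapped by $\beta$ genuinely fits the hypotheses of Lemma \ref{ch5:s2} or Lemma \ref{ch5:s4}, and that the cumulative area stays below $E_{n-1}$ throughout the induction -- which is precisely why the statement requires $N < E_{n-1}$ rather than $N < E_n$.
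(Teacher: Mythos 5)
Your strategy is genuinely different from the paper's, and it has several gaps that keep it from closing.

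The paper's proof is a one-shot contradiction: if $l_s(F(y))>0$, then since $F(x)$ and $F(x)^{-2}$ carry no $s$-edges, the $s$-bands joining $F(y)$ to $F(y)^{-1}$ are concentric and separate the $F(x)$-arc from the $F(x)^{-2}$-arc. The innermost such band shows $F(x)$ is conjugate in $G$ to $A^i$ (with $i$ its length and $A\in\{t,\,v w_n^{-1}v^{-1}w_n\}$); applying Lemma~\ref{ch5:s4} to each $G$-annulus between consecutive bands propagates the exponent and the base, so the outermost band shows $(F(x))^2$ is conjugate in $G$ to $A^{\pm i}$. Thus $A^{2i}\sim_G A^{\pm i}$, which Lemma~\ref{ch5:s4} rules out once $i\neq 0$, and $i\neq 0$ because $M>N$ (otherwise the innermost subdiagram would give $Area_G(F(x))<N<M$). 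Your argument never invokes this squaring mismatch, which is the actual source of the contradiction.

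Concretely, the gaps in your plan are: (i) In the ``concentric'' case you assert that Lemma~\ref{ch5:s4} ``forces $\beta$ to have length zero,'' but that lemma concludes only $A=B$ and $i=\pm j$; it does not give $i=0$, and nothing in your setup does either. (ii) In the ``pinch'' case you claim Lemma~\ref{ch5:s2} shows the trapped power is trivial, but neither clause of Lemma~\ref{ch5:s2} applies: the first clause needs a boundary of the form $x^i u_1 x^j u_2$ with $i\neq 0$, and the second clause bounds $Area_G\bigl((v w_n^{-1}v^{-1}w_n)^k\bigr)$, not the area of a diagram certifying $U_i=_G A^k$ for an arbitrary word $U_i$. (iii) Even if a band did have length zero, that only shows $U_i=_G 1$; it does not make $U_i$ the empty word, so the two flanking $s^{\pm 1}$ letters need not cancel \emph{freely} in $F(y)$ — and removing a genuine HNN-pinch requires the $s$-relator, not a free reduction. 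So the iterative ``peeling'' never gets off the ground: each peeling step would require a conclusion the cited lemmas do not supply, and would change $F(y)$ only up to $H_{v_2}$-equality rather than free equality. The fix is to abandon the reduction strategy and argue as the paper does, reading off a conjugacy $A^{2i}\sim_G A^{\pm i}$ from the two ends of the concentric chain and applying Lemma~\ref{ch5:s4} once more to rule it out.
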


\begin {proof}
Consider the diagram for $F(x)^{F(y)} = (F(x))^2$. Suppose $l_s(F(y))>0$, then by applying Lemma \ref{ch5:s4} to the concentric subdiagrams between $s$-bands we see that $F(x)$ is conjugate to $A^i$ while $F^2(x)$ is conjugate to $B^{i}$ (Lemma \ref{ch5:s4} makes sure it's the same $i$), where $A,B$ are either $t$ or $(v w_n^{-1} v^{-1} w_n)$. Then we have that $A^{2i}$ is conjugate to $B^i$, so that $A=B$ and again by Lemma \ref{ch5:s4} that's impossible ($i \neq 0$ because $M>N$).

\end {proof}

\begin {lem} \label{ch5:aut7}
If $F: [\mu_1] \to [\mu_2]$ is $(L,N,M)$ ($lN^3<E_n$) is such that $F(t), F(s)$ are words in $H_{v_2}$, $F(x), F(y)$ are in $G$ and $Area(F(x))>N$, then there exists $F': [\mu_1] \to [\mu_2]$ such that $F'(x)=x^i$ and $F'(y) \in \langle x,y \rangle$, where $i\neq 0$, while $F'(t), F'(s)$ stay in $H_{v_2}$, $F'$ is $(3N^3, lN^3,M-lN^3)$.
\end{lem}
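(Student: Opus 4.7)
The approach mirrors the effective Collins arguments of Lemmas \ref{ch5:aut1} and \ref{ch5:aut5}: iterate a pinch-extraction in the van Kampen diagram for the relation $F(x)^{F(y)} = (F(x))^2$, each pinch converting into a conjugation that shortens a suitable invariant length. Three successive reductions are required, corresponding to the HNN-extension tower $\langle x\rangle \subset K \subset G$.

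Phase I kills the $t$-letters of $F(x)$. Since $F(x), F(y) \in G$ and $G$ is an HNN-extension of $K$ with stable letter $t$, the area-$<N$ diagram for $F(x)^{F(y)}=(F(x))^2$ lives over $G$; the $(F(x))^2$-part of its boundary carries twice as many $t^{\pm 1}$ as $F(x)$, so the imbalance forces a $t$-band with both endpoints inside $(F(x))^2$ or one linking $(F(x))^2$ to $F(y)^{\pm 1}$, yielding $w =_G k^{-1}F(x)k$ with $l_t(w)<l_t(F(x))$ and short $k$. Replacing $F$ by the conjugate $F^{(1)}$ and iterating at most $L$ times (each step growing lengths only by a bounded amount, since the defining relators of $G$ have constant length) puts $F^{(1)}(x) \in K$. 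Phase II kills the $t$-letters of $F^{(1)}(y)$: with $F^{(1)}(x) \in K$, all $t$-bands in the diagram must join a $t$-letter of $F^{(1)}(y)$ to one of $F^{(1)}(y)^{-1}$; extracting such pinches, whose conjugators lie in $K$ (both label sides of a $t$-band are in $K$, by the HNN structure), preserves $F^{(1)}(x) \in K$ and gives $F^{(2)}$ with $F^{(2)}(x), F^{(2)}(y) \in K$. Phase III works inside $K=\langle x,y\mid x^y=x^2\rangle$, itself an HNN-extension of $\langle x\rangle$ with stable letter $y$: apply the same pinch-extraction to $y$-bands, shrinking $l_y(F^{(2)}(x))$ to $0$. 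The conjugators now lie in $\langle x\rangle \subset K \subset H_{v_2}$, preserving $F'(y) \in \langle x, y\rangle$ and $F'(t), F'(s) \in H_{v_2}$. After at most $L$ iterations $F'(x) = x^i$, and $i \neq 0$ because the area bound $Area_{\mu_2}(F'(x)) > M - lN^3 > 0$ rules out $F'(x) = 1$. Tallying the three phases yields the claimed type $(3N^3, lN^3, M-lN^3)$.

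The main obstacle is Phase II: one must check that the conjugator produced from a $t$-band pinch in $F^{(1)}(y)$ lies in $K$, so that the hard-won invariant $F^{(1)}(x) \in K$ of Phase I is not destroyed. This is built into the HNN structure of $G$, but in the effective setting the conjugator has to be read off the diagram carefully so that no spurious $t$-letters accumulate. The remaining difficulty is routine bookkeeping across three nested iterations, keeping every intermediate length and area well below $E_n$ so that the lemmas of Section \ref{ch5:lemmas} remain applicable; this is covered by the hypothesis $lN^3 < E_n$ and by the bound $L \leq N$ on iteration count per phase.
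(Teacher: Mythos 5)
Your overall strategy (iterated pinch-extraction on the van Kampen diagram for $F(x)^{F(y)}=(F(x))^2$, reading off the HNN tower $\langle x\rangle\subset K\subset G$) is the same as the paper's, but the order in which you run the reductions is different and this is where a genuine gap appears. The paper first makes $F(x)$ $t$-free, then $y$-free (so $F(x)=x^i$), and \emph{only then} reduces $l_t(F(y))$. You instead try to reduce $l_t(F(y))$ in Phase~II while $F(x)$ is still an arbitrary $K$-word, and do the $y$-reduction of $F(x)$ last. The problem is that your Phase~II does not actually decrease $l_t(F(y))$. When $F(x)\in K$ and $F(y)$ is $t$-reduced, a $t$-band has to join a $t$-letter of $F(y)$ with one in $F(y)^{-1}$ (as you say), and the bands are nested around $F(x)$ or $F(x)^{-2}$. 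But such a band is \emph{not} a pinch in $F(y)$; what the innermost band gives you is the relation $b_0^{-1}F(x)b_0\in\langle x\rangle$ or $\langle y\rangle$, where $b_0$ is the prefix of $F(y)$ before its first $t$. Conjugating $F$ by $b_0$ (which is indeed a $K$-element, so $F'(x)\in K$ survives) simply rotates the $t$-letters of $F(y)$ around -- $l_t$ stays the same -- while it suddenly forces $F'(x)$ to be a power of $x$ or $y$. In other words, the attempt to run Phase~II collapses into doing Phase~III first, and you still have not removed a single $t$-letter from $F(y)$.

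This is why the paper fixes $F(x)=x^i$ \emph{before} touching $F(y)$'s $t$-letters: with $x^i$ inside, the nested $t$-bands read $x^j$, then (by $x^t=y$) $y^j$, and the Baumslar--Solitar structure of $K$ then forces the next segment of $F(y)$ to produce a genuine pinch $t\,y^\ell\,t^{-1}$ that can be excised and replaced by $x^\ell$, cutting $l_t(F(y))$ by $2$ (this is exactly what Figure~\ref{baumslagaut} in the paper depicts). Your proof never identifies where that decrement comes from. A secondary slip: in Phase~III you assert the conjugators live in $\langle x\rangle$; they live in $K$ in general (which is still enough to keep $F'(y)\in\langle x,y\rangle$), but the stronger claim is not justified. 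To repair the argument you should swap Phases~II and~III, i.e.\ follow the paper's order: after $F(x)\in K$, reduce $l_y(F(x))$ via $y$-bands in $K$ to get $F(x)=x^i$ with $i\neq0$ (from $M>lN^3$), and only then run the concentric $t$-band argument to get $l_t(F(y))=0$.
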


\begin {proof}
We start similarly to Lemma \ref{ch5:aut1}, by making sure $F(x)$ has no $t$ letters, then no $y$ letters ($i \neq 0$ follows from $M>lN^3$).  Then, similar to Lemma \ref{ch5:aut2}, we are going to be reducing $F(y)$ until its $l_t$ is equal to $0$. See Figure \ref{baumslagaut} for a diagram.

\end {proof}

\begin{figure}[h]
\centering
\includegraphics[scale=0.7]{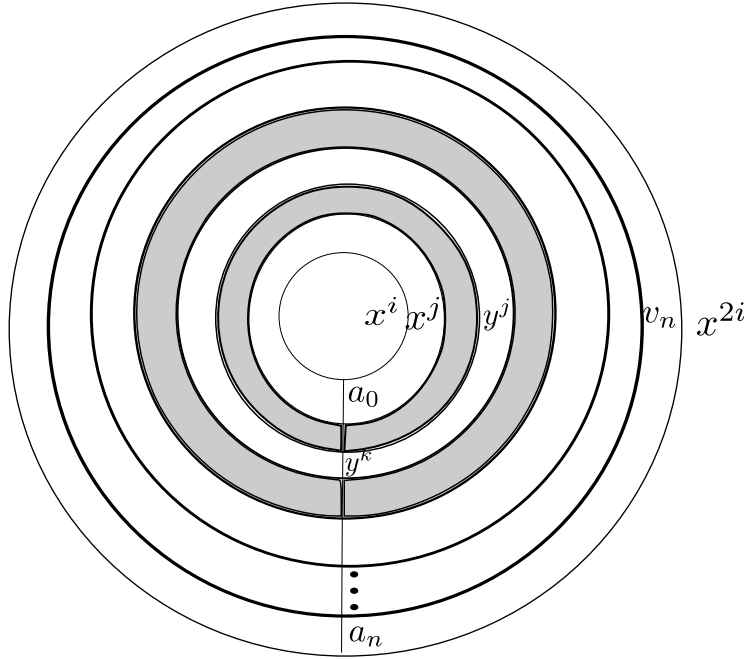}
\caption{The grey circular regions are $t$-bands. Since the inside of the innermost $t$-band is $x^j$, the inside of the next $t$-band is $y^j$, which implies there is a pinch in $F(y)$ that we can remove.}
\label{baumslagaut}
\end{figure}

\begin {lem} \label{ch5:aut8}
If $F: [\mu_1] \to [\mu_2]$ is $(L,N,M)$ ($N < E_{n}$) is such that $F(t), F(s)$ are words in $H_{v_2}$, and $F(x)=x^i, F(y) \in K$, then $F(t) \in G$.
\end{lem}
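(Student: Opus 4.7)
The approach parallels the proof of Lemma~\ref{ch5:aut6}, but uses the $\mu_1$-relation $x^t=y$ in place of $x^y=x^2$. Applying $F$ gives $F(t)^{-1}\,x^i\,F(t)=_{\mu_2} F(y)$, realized by a van Kampen diagram of area less than $N<E_n$. View this as an annular diagram with inner boundary labeled $x^i$, outer boundary labeled $F(y)$, and the cut labeled $F(t)$. Both boundaries lie in $K\subset G$ and hence contain no $s$-letters, so every $s$-band is forced to run between the two copies of the cut $F(t)$. After assuming $F(t)$ freely reduced, each such band has positive length $\ell\geq 1$; circular $s$-bands are ruled out exactly as in Lemma~\ref{ch5:hat1}, since an innermost one would realize a non-trivial power of $t$ or of $vw_n^{-1}v^{-1}w_n$ as trivial in $G$ through area less than $E_n$, contradicting Lemma~\ref{ch5:s1}.

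Suppose for a contradiction $l_s(F(t))>0$. The concentric $s$-bands partition the annulus into nested sub-annuli, each a diagram over $G$, and the two sides of an $s$-band of length $\ell$ read $t^{\pm\ell}$ and $(vw_n^{-1}v^{-1}w_n)^{\pm\ell}$ respectively, by the shape of the defining relator of $H_{v_2}$. In particular, the innermost sub-annulus is a $G$-diagram of area less than $N$ whose two boundary components are labeled $x^i$ and $A^{\ell_1}$, where $A\in\{t,\,vw_n^{-1}v^{-1}w_n\}$ and $\ell_1\geq 1$; so $x^i$ is conjugate in $G$ to $A^{\ell_1}$. Regarding $G$ as the HNN-extension of $K$ with stable letter $t$, the cyclically reduced $t$-length is a conjugacy invariant by Collins'/Britton's lemma. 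However, $x^i\in K$ has $t$-length $0$ (with $i\neq 0$, as inherited from the output of Lemma~\ref{ch5:aut7}), whereas $t^{\ell_1}$ has $t$-length $\ell_1\geq 1$ and $(vw_n^{-1}v^{-1}w_n)^{\ell_1}$ is cyclically reduced with $t$-length at least $2\ell_1\cdot l_t(w_n)>0$, since $v$ contains no $t$-letters and so no $t$-cancellation occurs at the cyclic joins. No such conjugacy exists, a contradiction; therefore $l_s(F(t))=0$ and $F(t)\in G$.

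The main obstacle is the verification in the last step for the case $A=vw_n^{-1}v^{-1}w_n$: one must check that its powers remain cyclically reduced with positive $t$-length, which depends on the fact that $v$ is a word in $y,\,yx$ only and hence contributes no $t$-letters that could cancel into neighboring copies of $w_n^{\pm 1}$. Equivalently, the same contradiction can be reached by a direct appeal to Lemma~\ref{ch5:s4}, propagating the conclusion $A=B$, $\ell_1=\pm\ell_j$ from the innermost sub-annulus outward through the entire chain of sub-annuli, and obtaining in the end that $x^i$ is conjugate to such an $A^{\pm\ell_1}$, which is impossible.
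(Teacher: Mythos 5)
Your overall approach matches the paper's: consider the diagram for $F(x)^{F(t)}=F(y)$, note that $s$-bands must connect the two copies of the cut $F(t)$, and extract from the innermost $G$-region a conjugacy of $x^i$ to one side of an $s$-band. The circular-band case and the $A=t$ case are handled correctly.

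However, there is a genuine error in the case $A=vw_n^{-1}v^{-1}w_n$. You assert that $(vw_n^{-1}v^{-1}w_n)^{\ell_1}$ is cyclically reduced as an element of $G$ (viewed as an HNN-extension of $K$ with stable letter $t$) with positive $t$-length, and that this follows because $v$ contributes no $t$-letters at the cyclic joins. This is false: $w_n$ represents the \emph{identity} in $G$, so $vw_n^{-1}v^{-1}w_n=_G 1$ and its cyclically reduced $t$-length is $0$, not positive. The whole construction of $w_n$ rests on the fact that it \emph{does} reduce to $1$ by Britton pinches, just through an enormous number of them. Consequently Collins'/Britton's lemma, applied as a genuine (non-effective) conjugacy invariant in $G$, gives no contradiction in this case via $t$-length. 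The same confusion undermines the alternative via Lemma~\ref{ch5:s4}, which does not apply when one of the two conjugate words is $x^i$; and your ``propagating outward'' argument terminates precisely at the conjugacy $x^i\sim A^{\pm\ell_1}$ that you were trying to exclude.

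The correct (and simpler) reason this case is impossible, and the one the paper uses, is precisely the triviality you overlooked: since $vw_n^{-1}v^{-1}w_n=_G 1$, a $G$-conjugacy of $x^i$ to $(vw_n^{-1}v^{-1}w_n)^{\pm\ell_1}$ forces $x^i=_G 1$, contradicting $i\neq 0$. This is what the phrase ``$x^i$ is conjugate to $t^j$ or $1$'' in the paper's proof is encoding: the $(vw_n^{-1}v^{-1}w_n)$-side of the band is trivial in $G$, so that case collapses to conjugacy with $1$. With this repair, the rest of your argument is sound and coincides with the paper's.
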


\begin {proof}
Consider the diagram for $F(x)^{F(t)} = F(y)$. If there is an $s$-band it means $x^i$ is conjugate to $t^j$ or $1$ in $G$, which is impossible since $i \neq 0$ (because $M>0$).
\end {proof}

\begin {lem} \label{ch5:aut9}
If $F: [\mu_1] \to [\mu_2]$ ($2lN^3<E_n$) is $(L,N,M)$ is such that $F(s)$ is a word in $H_{v_2}$, $F(t) \in G$, $F(x)=x^i$ and $F(y) \in K$, then there exists $F': [\mu_1] \to [\mu_2]$ such that $F'(x) = x, F'(y) = y, F'(t) = gt, F'(s) \in H_{v_2}$, where $g$ is in $K$ and commutes with $x$, $F'$ is $(6N^3, 2lN^3,M-2lN^3)$.
\end{lem}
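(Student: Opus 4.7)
The plan is to mimic Lemma~\ref{ch5:aut7} and analyze the van Kampen diagram $D$ over $\mu_2$ for the image under $F$ of the HNN relator $x^ty^{-1}$, namely the equation $F(t)^{-1}x^iF(t)F(y)^{-1}=1$ of area $<N$. Since $F(x), F(y), F(t)\in G$ and $F(s)\in H_{v_2}$, the arguments of Lemmas~\ref{ch5:aut5}--\ref{ch5:aut6} show that $D$ contains no $s$-bands, hence $D$ is a diagram over $G$. We view $G$ as the HNN-extension of $K$ with stable letter $t$, so $t^{-1}x^kt=y^k$ and $ty^kt^{-1}=x^k$. After HNN-reducing $F(t)$ (costing area at most $L$), all $t$-letters on $\partial D$ lie in the $F(t)^{\pm 1}$-arcs, and the $t$-bands of $D$ pair them concentrically around the $x^i$-segment.

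Writing $F(t)=c_0 t^{\epsilon_1}c_1 t^{\epsilon_2}\cdots$ in reduced form, the innermost $t$-band forces $c_0^{-1}x^ic_0$ to be pushable through $t^{\epsilon_1}$. The case $\epsilon_1=-1$ would require $c_0^{-1}x^ic_0\in\langle y\rangle$, ruled out by Britton's lemma in $K$ applied to $x^i\neq 1$. Hence $\epsilon_1=1$ and effectively $c_0^{-1}x^ic_0=x^j$ in $K$, which forces $c_0=z\,y^{m_0}$ with $z\in C_K(x)=\mathbb{Z}[1/2]$, $m_0\in\mathbb{Z}$, and $j=i\cdot 2^{m_0}$. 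If $l_t(F(t))\geq 2$, the next band's inside label would be $c_1^{-1}y^jc_1$; the $\langle x\rangle$-pushout is ruled out by Britton applied to $y^j\neq 1$, while the $\langle y\rangle$-pushout forces $c_1\in\langle y\rangle$, creating a pinch $t\,c_1\,t^{-1}$ in $F(t)$ and contradicting reducedness. Thus $l_t(F(t))=1$ and effectively $F(t)=c\,t\,d$ with $c,d\in K$; the outer region of $D$ then yields $d^{-1}y^jd=F(y)$ in $K$ via area $<N$.

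Invoking the Baumslag--Solitar relation $F(x^y)=F(x)^2$, the diagram in $K$ for $F(y)^{-1}x^iF(y)=x^{2i}$ of area $<N$ yields, by an effective Collins-type analysis exactly as in Lemma~\ref{ch5:aut7}, $F(y)=z''\cdot y$ effectively with $z''\in C_K(x)$. Comparing $y$-exponent sums in $d^{-1}y^jd=z''y$ gives $j=1$, and then $i\cdot 2^{m_0}=1$ with $i\in\mathbb{Z}\setminus\{0\}$ forces $i=1$ and $m_0=0$, so $c\in C_K(x)$. Writing $d=y^nx^b$ and using the identity $ty^n=x^nt$ in $G$, rewrite $F(t)=c\,t\,y^nx^b=(cx^n)\,t\,x^b$; since $cx^n\in C_K(x)$, we may assume $d=x^b\in\langle x\rangle$. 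Post-composing $F$ with the inner automorphism $a\mapsto x^bax^{-b}$ of $[\mu_2]$ produces $F'$ with $F'(x)=x$, $F'(t)=(x^bc)\,t=g\,t$ where $g=x^bc\in C_K(x)$, and $F'(y)=y$ by direct calculation using $z''=x^{-b/2}$ (derived from $d^{-1}yd=x^{-b/2}y$); moreover $F'(s)\in H_{v_2}$ is preserved since $x^b\in H_{v_2}$.

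Each rewriting step and inner automorphism inflates length and area by a multiplicative factor of at most $O(l)$, producing $F'$ of type $(6N^3, 2lN^3, M-2lN^3)$. The main obstacle is the iterated band-by-band analysis: all push-through steps and the HNN-reduction of $F(t)$ must stay within the effective area budget $2lN^3<E_n$, and the $y$-exponent $n$ of $d$, which is \emph{not} constrained by the two relations $x^y=x^2$ and $x^t=y$ used in this lemma, must first be absorbed via $ty^nt^{-1}=x^n$ in $G$ before the final conjugation yields the clean values $F'(x)=x$ and $F'(y)=y$.
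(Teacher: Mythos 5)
You follow the same overall strategy as the paper: analyze the van Kampen diagram for $F(x^ty^{-1})$, eliminate $s$-bands, HNN-reduce $F(t)$ to a single $t$-letter via $t$-band analysis, use the Baumslag--Solitar relation to pin down the $y$-exponent of $F(y)$, and finally conjugate to normalize. The band-by-band reduction of $F(t)$ and the conclusion $l_t(F(t))=1$ are correctly argued.

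However, there is a concrete logical gap. From $j=i\cdot 2^{m_0}$ and $j=1$ you assert that "$i\cdot 2^{m_0}=1$ with $i\in\mathbb{Z}\setminus\{0\}$ forces $i=1$ and $m_0=0$." This is false: the integer solutions are $i=2^{-m_0}$ with $m_0\leq 0$, so $i$ can be any positive power of $2$ (e.g. $i=2$, $m_0=-1$). When $i>1$ the element $c=c_0=zy^{m_0}$ has $m_0<0$, hence does \emph{not} lie in $C_K(x)$, and the downstream steps collapse: the claim "$c\in C_K(x)$", the rewriting $F(t)=(cx^n)tx^b$ with $cx^n\in C_K(x)$, and the final inner automorphism producing $g=x^bc\in C_K(x)$ all depend on $m_0=0$. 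Nothing in your $t$-band analysis rules out $m_0<0$, since $c_0$ is an arbitrary $K$-subword of $F(t)$ whose $y$-exponent sum is unconstrained.

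The paper sidesteps this by not claiming $i=1$; it observes that $F(x)=x^i$ is conjugate to $x$ (so $i$ is a power of $2$) and then conjugates $F$ so that $F'(x)=x$ and $F'(y)=y$ simultaneously. In your framework the fix is to insert a conjugation of $F$ by $y^{m_0}$ before the final inner automorphism: this sends $x^i=x^{2^{-m_0}}$ to $x$, stays inside $K\subset H_{v_2}$ (so $F(s)$ remains a word in $H_{v_2}$), and absorbs the $y^{m_0}$-factor of $c_0$ so that the remaining $K$-part of $F'(t)$ to the left of $t$ does commute with $x$. Without this step the proof as written is incorrect.
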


\begin {proof}
This is an effective version of the Lemma 3.4 from \cite{lishak2}. First we can reduce $F(t)$ until it has just $1$ letter $t$, similarly to Lemma \ref{ch5:aut7}. Then consider an equality in $K$ $(x^i)^{F(y)} = x^{2i}$ . Note that the total $y$ power of $F(y)$ (counted with signs) is $1$. Since $F(x)$ and $F(y)$ are conjugate in the Baumslag-Gersten group (by $F(t)$), $F(x)$ is conjugate to $x$. If in addition we conjugate $F$ by $yF(y)$, we get $F'(y)=y$. Let $F'(t) = g_1tg_2$. We see that $g_1=_K y^m$ and therefore we can make $F'(t) = gt$, where $g$ commutes with $x$.

\end {proof}

\begin {lem} \label{ch5:aut10}
If $F: [\mu_1] \to [\mu_2]$ of type $(L,N,M)$ is such that $F(s)$ is a word in $H_{v_2}$, $F(x)=x, F(y)=y$ and $F(t) = gt$, where $g \in K$ commutes with $x$, then there exists $F': [\mu_1] \to [\mu_2]$ such that $F'(x) = x, F'(y) = y, F'(t) = t$, and $F'(s) = hsk$ for $h,k \in G$, where $F'$ is $(N^3,lN^3,M-lN^3)$.
\end{lem}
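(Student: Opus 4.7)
The plan is to proceed in the style of Lemmas \ref{ch5:aut1}, \ref{ch5:aut5} and \ref{ch5:aut7}, using $s$-band analysis in a van Kampen diagram to iteratively simplify $F$. Consider the diagram $D$ realising the image under $F$ of the $\mu_1$-relator $s^{-1}(v_1 w_n^{-1} v_1^{-1} w_n) s t^{-1}$: its boundary label is $F(s)^{-1}(v_1 w_n^{-1} v_1^{-1} w_n) F(s) (gt)^{-1}$ and its area is less than $N$. Since each $F(a)$ lies in $H_{v_2}$ we may take $D$ to live over $H_{v_2}$, whose only non-$G$ relator cell is $s^{-1} R s t^{-1}$ with $R := v_2 w_n^{-1} v_2^{-1} w_n$. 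The $s$-edges on $\partial D$ occur only inside $F(s)$ and $F(s)^{-1}$, so the $s$-bands form a non-crossing matching of these letters.

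The first task is to reduce $l_s(F(s))$. Any $s$-band whose two endpoints both lie on $F(s)$ (or both on $F(s)^{-1}$) isolates a subword $s^{\epsilon} A s^{-\epsilon}$ of $F(s)$ with $A \in G$, together with a $G$-subdiagram whose boundary is $A \cdot U^{\mp 1}$, where $U$ is a power of $R$ (if the enclosed side of the band carries the $R$-label) or a power of $t$ (otherwise). Hence $A =_G U$ through area less than $N$, and the $s$-relator of $H_{v_2}$ lets us rewrite $s^{\epsilon} A s^{-\epsilon}$ as a word in $G$, decreasing $l_s(F(s))$ by $2$ at a cost of at most $l \cdot N$ extra area. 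If no such internal pinch exists, all $s$-bands are parallel from $F(s)^{-1}$ to $F(s)$, and a conjugation of $F$, exactly as in Lemma \ref{ch5:aut2}, absorbs the outermost band and strictly decreases the number of $s$-bands. Iterating, and using that the net $s$-exponent in $F(s)$ equals $+1$ to match $t^{+1}$ on the right, we arrive at $l_s(F(s))=1$.

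Write $F(s) = hsk$ with $h, k \in G$. The diagram then consists of a single $s$-band of length $\ell$, flanked by two $G$-subdiagrams, which yield
\[
h^{-1}Rh =_G R^{\pm \ell}, \qquad k^{-1} t^{\pm \ell} k =_G gt,
\]
each through area less than $lN^3$. Lemma \ref{ch5:s4} applied to the first equation (with $A=B=R$) forces $|\ell|=1$ and shows that $h$ effectively centralises $R$ in $G$; applied to the second equation (with $A=B=t$) it gives $k =_G t^m$ effectively, whence $gt =_G k^{-1}tk = t$ in $G$, so $g=_G 1$ through small area. Setting $F'(x) = x$, $F'(y) = y$, $F'(t) = t$, $F'(s) = hsk$ yields the desired map; the area bookkeeping, which accumulates across the at most $N$ iterations exactly as in Lemma \ref{ch5:aut5}, gives the claimed type $(N^3, lN^3, M-lN^3)$. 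The main obstacle is precisely this bookkeeping: one must verify at every iteration that the innermost pinch genuinely forces an $s$-cancellation (this is where the effective small-cancellation lemmas of Section \ref{ch5:lemmas}, in particular Lemma \ref{ch5:s4}, are essential), and that the area estimates degrade only multiplicatively across the iterations.
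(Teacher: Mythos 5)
Your overall plan (analyze $s$-bands in the diagram for the image of the $\mu_1$-relator $s^{-1}(v_1 w_n^{-1} v_1^{-1} w_n) s t^{-1}$, reduce $l_s(F(s))$, then read off the form of $F(s)$) is the right one, but the order in which you do things creates a genuine gap, and the key application of Lemma~\ref{ch5:s4} is not legitimate.

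You first force $l_s(F(s))=1$, and only then try to deduce $\ell=1$ and $g=1$ from the two $G$-subdiagrams adjacent to the single $s$-band. But the equations you extract are not of the form Lemma~\ref{ch5:s4} allows. The inner subdiagram has boundary $h^{-1}F(v_1 w_n^{-1} v_1^{-1} w_n)h\,(R_2^{\pm\ell})^{-1}$ with $R_2 = v_2 w_n^{-1} v_2^{-1} w_n$; since $F(t)=gt$ with $g$ not yet known to be trivial, $F(w_n)$ is $w_n$ with every $t$ replaced by $gt$, so the left-hand word is \emph{not} a power of $v_2 w_n^{-1} v_2^{-1} w_n$ (nor of $t$), and Lemma~\ref{ch5:s4} simply does not apply to it. Likewise the outer subdiagram gives $k^{-1}t^{\pm\ell}k =_G gt$, but $gt$ is not a power of $t$ or of $R$ unless $g=1$ — which is exactly the conclusion you are trying to reach — so applying Lemma~\ref{ch5:s4} with $A=B=t$ there to get $k=_Gt^m$ and hence $g=1$ is circular. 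The paper avoids both problems by proving $g=1$ \emph{before} touching $l_s(F(s))$: it uses only the $t$-power invariant on the $s$-band nearest $F(t)$ (forcing length $1$ without Lemma~\ref{ch5:s4}, because $gt$ has $t$-exponent $1$ while $R_2^{\pm\ell}$ has $t$-exponent $0$), then looks at the unique $t$-band in the resulting $G$-subdiagram for the conjugacy $gt \sim t$ to obtain $x^i g y^i =1$ and conclude $i=0$, $g=1$ from the $y$-exponent invariant and the hypothesis that $g$ commutes with $x$. Only afterward, with $F(t)=t$ and hence $F(w_n)=w_n$, is Lemma~\ref{ch5:s4} invoked to cancel $s$-bands.

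A secondary issue: in the no-pinch case you propose to ``conjugate $F$, exactly as in Lemma~\ref{ch5:aut2}, to absorb the outermost band.'' In Lemma~\ref{ch5:aut2} that conjugation was harmless because the only constraint being tracked was $F(x)\in H_{v_2}$. Here, however, the hypotheses $F(x)=x$, $F(y)=y$, $F(t)=gt$ are literal equalities in $[\mu_2]$ that must survive to the conclusion $F'(x)=x$, $F'(y)=y$, $F'(t)=t$; conjugating $F$ by an element $c\in G$ replaces $F(x)$ by $c^{-1}xc$, etc., and so is not available. The paper reduces $l_s(F(s))$ by cancelling pairs of $s$-bands (using Lemma~\ref{ch5:s4} once $g=1$ is known), modifying only $F(s)$ and leaving the other generator images untouched.

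In short: your proposal reverses the two halves of the argument, and that reversal is not cosmetic — it is precisely what makes your Lemma~\ref{ch5:s4} applications fail. To repair the proof, establish $g=1$ first via the $t$-power and $y$-power invariants as above, and reduce $l_s(F(s))$ by pinch cancellation alone, without conjugating $F$.
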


\begin {proof}
Consider a diagram for $F(v_1^{-1}w_n v_1 )^{F(s)} = F(t)$ of area less than $N$. In $G$ $F(v_1^{-1}w_n v_1 )$ can not be conjugate to $F(t)$, because of different total power of $t$ (counted with signs). Therefore $F(s)$ has at least one letter $s$ or $s^{-1}$. Consider the nearest to $F(t)$ circular $s$-band on the diagram. Since $F(t)=gt$ is not conjugate to $1$ or $t^i$ for $i \neq 1$, we have that this $s$ band has length $1$, and $gt$ is conjugate to $t$ in $G$. Consider the subdiagram corresponding to this conjugation. There is one $t$ band on that subdiagram, giving $x^i g y^i=1$ in $G$, which is possible only for $i$ being $0$ and $g$ being $1$, because $g$ commuting with $x$ means it has the total $y$ power (counted with signs) $0$. Now, we want to reduce $l_s((F(s))$ to $1$. Notice that by Lemma \ref{ch5:s4} whenever two $s$-bands face each other with their $t$ side, we can cancel them. Furthermore, it is not possible to have just two $s$-bands facing each other with their $1$ side because we know $F(t) \neq_G 1$. Now, the lemma follows by the standard argument.
\end {proof}

\begin {prop} \label{ch5:effective}
If $F: \mu_1 \to \mu_2$ of type $(N,N,M)$ is such that $M > 2(lN)^{21}$ and $2(lN)^{21} < E_{n-1}$, then $v_1 = v_2$.
\end {prop}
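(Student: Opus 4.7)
The plan is to apply Lemmas \ref{ch5:aut1}--\ref{ch5:aut10} in sequence to reduce an arbitrary $(N,N,M)$-map $F\colon [\mu_1]\to [\mu_2]$ to a very explicit canonical form, and then to compare that canonical map with the defining relator of $\mu_2$ to force $v_1=v_2$. Concretely, starting from the given $F$, I would apply Lemma \ref{ch5:aut1} (possibly composed with the swap of the two factors $H_{v_2}\leftrightarrow \hat H_{v_2}$, which does not change $v_2$) to place $F(x)$ inside $H_{v_2}$; then Lemmas \ref{ch5:aut2}, \ref{ch5:aut3}, \ref{ch5:aut4} to move $F(y)$, $F(t)$, $F(s)$ into $H_{v_2}$; then Lemma \ref{ch5:aut5} followed by Lemma \ref{ch5:aut6} to bring $F(x),F(y)$ into $G$; Lemma \ref{ch5:aut7} followed by Lemma \ref{ch5:aut8} to normalize further to $F(x)=x^i$, $i\neq 0$, $F(y)\in K$, $F(t)\in G$; Lemma \ref{ch5:aut9} to reach $F(x)=x$, $F(y)=y$, $F(t)=gt$ with $g\in K$ centralizing $x$; and finally Lemma \ref{ch5:aut10} to produce
$$F^*(x)=x,\qquad F^*(y)=y,\qquad F^*(t)=t,\qquad F^*(s)=hsk$$
for some $h,k\in G$. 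Each lemma in the chain at most cubes the previous $N$ (with extra factors of $l$) and subtracts at most a polynomial in $lN$ from the injectivity parameter; the hypothesis $2(lN)^{21}<E_{n-1}$ absorbs all these polynomial blow-ups, while $M>2(lN)^{21}$ ensures that the injectivity parameter stays positive throughout.

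With $F^*$ in hand, I would apply it to the $\mu_1$-relator $s^{-1}v_1 w_n^{-1}v_1^{-1}w_n\,s\,t^{-1}$ and then use the corresponding $\mu_2$-relator to substitute for $t$, arriving at the $H_{v_2}$-equality
$$k^{-1}\,s^{-1}\,h^{-1}\,\bigl(v_1 w_n^{-1}v_1^{-1}w_n\bigr)\,h\,s\,k \;=\; s^{-1}\,v_2 w_n^{-1}v_2^{-1}w_n\,s$$
realized by a van Kampen diagram of area less than $E_{n-1}$. An $s$-band analysis of this diagram identifies the four $s^{\pm 1}$-letters on the boundary as being paired by non-crossing $s$-bands. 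The two ``self-pairings'' are excluded by the quantitative lemmas of Section \ref{ch5:lemmas}: pairing the two letters of $F^*(s)$ together would force $h^{-1}(v_1 w_n^{-1}v_1^{-1}w_n)h$ to equal a power of $t$ in $G$ through small area, contradicting Lemma \ref{ch5:s2} together with $M>2(lN)^{21}$, and the analogous argument rules out the other self-pairing. Hence each $s$-band joins one side of the equality to the other, and Lemma \ref{ch5:s4} applied to the sub-diagrams between consecutive $s$-bands forces the bands to have length zero and produces a $G$-equality
$$k'{}^{-1}\,\bigl(v_1 w_n^{-1} v_1^{-1} w_n\bigr)\,k' \;=\; v_2 w_n^{-1} v_2^{-1} w_n$$
through area less than $\tfrac{1}{2}E_n$. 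Feeding this into Lemma \ref{ch5:s5} gives $v_1=v_2$.

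The main obstacle is the $s$-band analysis: carefully disentangling the four boundary $s^{\pm 1}$-letters via non-crossing bands and using Lemmas \ref{ch5:s2} and \ref{ch5:s4} to absorb the conjugating factors $h,k$ introduced in Lemma \ref{ch5:aut10}, so as to extract a clean conjugation equation between the two words $v_j w_n^{-1}v_j^{-1}w_n$ in $G$. A secondary bookkeeping task is verifying that the cumulative polynomial blow-ups across the ten reduction lemmas are indeed dominated by $(lN)^{21}$, so that the hypotheses of every intermediate lemma are met and the final diagram area stays below the threshold where Lemma \ref{ch5:s5} applies.
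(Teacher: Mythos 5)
Your proposal matches the paper's proof: apply Lemmas \ref{ch5:aut1}--\ref{ch5:aut10} in order to normalize $F$ to $F'(x)=x$, $F'(y)=y$, $F'(t)=t$, $F'(s)=hsk$ (with the cumulative type $(2(lN)^{21},2(lN)^{21},M-2(lN)^{21})$ dominated by the hypotheses), then extract a $G$-conjugacy between $v_1w_n^{-1}v_1^{-1}w_n$ and $v_2w_n^{-1}v_2^{-1}w_n$ from an $s$-band analysis and finish with Lemma \ref{ch5:s5}. The only cosmetic difference is that the paper keeps $t$ on the boundary and studies a single length-one $s$-band in the diagram for $(v_1w_n^{-1}v_1^{-1}w_n)^{hsk}=t$, whereas you first substitute the $\mu_2$-relator for $t$, producing four boundary $s^{\pm1}$-letters instead of two; this is a minor variant that leads to the same conjugacy equation.
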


\begin {proof}
Apply Lemmas \ref{ch5:aut1}, \ref{ch5:aut2}, \ref{ch5:aut3}, \ref{ch5:aut4}, \ref{ch5:aut5}, \ref{ch5:aut6}, \ref{ch5:aut7}, \ref{ch5:aut8}, \ref{ch5:aut9}, \ref{ch5:aut10} successively to obtain $F'$ of type $(2(lN)^{7 \cdot 3},2(lN)^{7 \cdot 3}, M - 2(lN)^{7 \cdot 3})$ such that $F'(x) = x, F'(y) = y, F'(t) = t$, and $F'(s) = hsk$ for $h,k \in G$.

Consider a diagram for $F'(v_1^{-1}w_n v_1 )^{F'(s)} = F'(t)$ of area less than $E_n$, or equivalently $(v_1^{-1}w_n v_1)^{hsk} = t$. The diagram has an $s$-band of length $1$, from which we see that $k^{-1} (v_1 w_n^{-1} v_1^{-1} w_n) k = v_2 w_n^{-1} v_2^{-1} w_n$ through area $< E_n$, which is impossible by Lemma \ref{ch5:s5} unless $v_1=v_2$. 

\end {proof}

\section{Exponential Growth} \label{ch5:main}
We will use the following Tietze transformations. Note the length bound in $Op_5(k)$.

\begin{defn}
 Let $\mu = \langle x_1,...,x_r \vert a_1,..., a_p\rangle$.

Elementary Tietze transformations: 
\begin{description}

\item[${Op}_1$] $\mu$ is replaced by $\langle
 x_1,...,x_r   |a_1,...,a_{i-1}, a' x_j^{\epsilon} x_j^{-\epsilon} a'', a_{i+1}, ..., a_p \rangle $, where $a_i \equiv a'a''$ and $\epsilon = \pm 1$.

\item[${Op}_1^{-1}$] The inverse of ${Op}_1$ - it deletes $ x_j^{\epsilon} x_j^{-\epsilon}$in one of the relators.

\item[${Op}_2$] $\mu$ is replaced by $\langle x_1,...,x_r\ | a_1,...,a_{i-1}, a'_i, a_{i+1}, ..., a_p \rangle $, where the word $a'_i$ is a cyclic permutation of the word $a_i$.

\item[${Op}_3$] $\mu$ is replaced by $\langle x_1,...,x_r | a_1,...,a_{i-1}, a_i^{-1}, a_{i+1}, ..., a_p \rangle$.

\item[${Op}_4$] $\mu$ is replaced by $\langle x_1,...,x_r | a_1,...,a_{i-1}, a_i a_j, a_{i+1}, ..., a_p \rangle$, where $i \neq j$.

\item[${Op}_5(d)$] $\mu$ is replaced by the presentation $\langle x_1,...,x_r, x_{r+1} | a_1, ..., a_p, x_{r+1}w \rangle$, where $w$ is a word on $a_1, ..., a_p$ of length at most $d-1$.

\item[${Op}_5^{-1}(d)$] The inverse of ${Op}_5(d)$.

\item[${Op}_6$] $\mu$ is replaced by the presentation $\langle x_1,...,x_r | a_1, ..., a_p, * \rangle$, where $*$ denotes the empty relation.

\item[${Op}_6^{-1}$] The inverse of ${Op}_6$.

\end{description}
\end{defn}

For any $d$, all presentations of the trivial group can be transformed from one to another using these operations. We imposed the bound $d$ so that Tietze transformations give rise to \emph{effective} homomorphisms.

\begin{defn}
For presentations $\mu, \mu'$, denote by $T_d(\mu, \mu')$ the minimal number of Tietze transformations needed to go from one to another.
\end{defn}

We describe a connection between the number of Tietze trasformations and effective isomorphisms in the next proposition.

\begin {prop} \label{ch5:tietzek}
If for some presentations $\mu, \mu'$ of the trivial group we have $T_d(\mu, \mu') \leq N$, then there exists $F: [\mu] \to [\mu']$ of type $(d^N, 1+2^N)$. Furthermore, if the letters in $\mu$ have area greater than $M$ then $F$ is $(d^N, 1+2^N, 2^{-N}M)$.
\end {prop}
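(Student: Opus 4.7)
The plan is to factor the Tietze sequence $\mu = \mu_0 \to \mu_1 \to \cdots \to \mu_N = \mu'$ into $N$ elementary moves, attach to each move a natural free-group map $F_i \colon [\mu_i] \to [\mu_{i+1}]$, and take $F = F_{N-1} \circ \cdots \circ F_0$. For the moves $Op_1, Op_1^{-1}, Op_2, Op_3, Op_4, Op_6, Op_6^{-1}$ (which leave the generating set unchanged) set $F_i$ to be the identity on generators; for $Op_5(d)$ take the natural inclusion into the enlarged alphabet; and for $Op_5^{-1}(d)$, which removes a generator $x_{r+1}$ together with the relator $x_{r+1}w$, set $F_i(x_{r+1}) = w^{-1}$ and identity on all other generators.

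I will verify three per-step estimates which, after composition, yield the three components of the type. (i) Length: $|F_i(a)| \le d$ for each generator $a$ of $\mu_i$, saturated only by $Op_5^{-1}(d)$. (ii) Area of relators: $Area_{\mu_{i+1}}(F_i(r)) \le 2$ for each relator $r$ of $\mu_i$; the only non-trivial case is $Op_4$, where the old relator $a_i$ equals $(a_i a_j)\cdot a_j^{-1}$ in $[\mu_{i+1}]$, i.e.\ a product of one relator and the inverse of another. (iii) Contraction: $Area_{\mu_i}(v) \le 2\, Area_{\mu_{i+1}}(F_i(v))$ for every $v \in [\mu_i]$. For the simpler moves this follows by re-labelling or splitting each cell of a van Kampen diagram; for $Op_5(d)$ one collapses the closed $x_{r+1}$-bands, which must close up because the boundary of the diagram contains no $x_{r+1}$; for $Op_5^{-1}(d)$ one appends at most $|v|_{x_{r+1}}$ cells labelled $x_{r+1}w$ along the boundary of a diagram for $F_i(v)$.

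Composition then delivers the three bounds. (i) multiplies to $|F(a)| \le d^N$. (ii) multiplies to $Area_{\mu'}(F(r)) \le 2^N < 1 + 2^N$ by induction: if $F_{j-1} \circ \cdots \circ F_0(r)$ is a product of at most $2^j$ conjugates of relators of $\mu_j$, then applying $F_j$ uses (ii) to rewrite each such relator image as a product of $\le 2$ conjugates of relators of $\mu_{j+1}$, doubling the count. Iterating (iii) along the chain $a = a_0, a_1, \ldots, a_N = F(a)$ with $a_{i+1} = F_i(a_i)$ yields $M < Area_\mu(a) \le 2^N\, Area_{\mu'}(F(a))$, i.e.\ $Area_{\mu'}(F(a)) > 2^{-N} M$.

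The main obstacle is item (iii) for $Op_5^{-1}$, where the honest estimate is really only additive, $Area_{\mu_i}(v) \le Area_{\mu_{i+1}}(F_i(v)) + |v|_{x_{r+1}}$, rather than multiplicative by a fixed factor. In the induction for the lower bound, $|a_i|$ is controlled by $d^i$ while $Area_{\mu_{i+1}}(a_{i+1})$ is inductively of order $2^{-(N-i)} M$; provided $M$ dominates $(2d)^N$, which is the regime in which the proposition will be applied in the next section, the additive length term is absorbed into the multiplicative factor of $2$, and the stated lower bound $2^{-N} M$ is recovered.
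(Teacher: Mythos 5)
Your proof follows essentially the same route as the paper's: factor the Tietze sequence into elementary moves, attach to each a free-group map, and compose per-step bounds for length, relator area, and generator area. Your per-step estimates (i) and (ii) match the paper's (the paper records each move as $(d,2)$ except $Op_4$ as $(2,3)$, which compose to $(d^N,1+2^N)$ exactly as you argue).

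Where you go beyond the paper is in scrutinizing the third bound. The paper asserts tersely that ``after one transformation [the area] might decrease by at most $2$, so that it becomes $M/2$,'' i.e.\ a uniform multiplicative factor-of-$2$ loss per step. You correctly observe that this is \emph{not} literally true for $Op_5^{-1}(d)$: if $a_{i+1}=F_i(a_i)$ is obtained by substituting $w^{\mp 1}$ for $x_{r+1}^{\pm 1}$, the honest estimate is only $Area_{\mu_i}(a_i)\le Area_{\mu_{i+1}}(a_{i+1})+|a_i|_{x_{r+1}}$, obtained by attaching one $x_{r+1}w$-cell per occurrence of $x_{r+1}$ in $a_i$ to the boundary of a diagram for $a_{i+1}$. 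The factor-of-$2$ claim fails even in simple cases (e.g.\ if $a_i=x_{r+1}$ and $w$ is close to $a_i$'s actual inverse, the right-hand area can be arbitrarily smaller than $|a_i|_{x_{r+1}}$; in the extreme case where $w$ is empty, $F_i(a_i)$ is the empty word and its area is $0$). So the paper's stated justification for the third component of the type is incomplete. Your repair---bounding $|a_i|$ by $d^i$ from the first component and noting the additive term is absorbed once $M$ dominates $(2d)^N$---is sound, and this is indeed the regime in which Proposition 5.3 is invoked in Theorem 5.6, where $M=E_n$ while $N=\exp_m(l)$ and $d\le l$, so $M\gg (2d)^N$. (A small slip: in your last paragraph the inductive order of $Area_{\mu_{i+1}}(a_{i+1})$ should be $2^{-(i+1)}M$, not $2^{-(N-i)}M$; this does not affect the conclusion.) In short: same construction as the paper, but you caught and patched a real gap in the paper's one-line justification of the $2^{-N}M$ lower bound.
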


\begin {proof}
Suppose $\mu$ differs from $\mu'$ by one operation. There is the obvious map $F: [\mu] \to [\mu']$ (if a letter was removed, send it to $w$, the word from the definition of $Op_5(d)$). In the case of ${Op}_1^{ \pm 1}, {Op}_2, {Op}_3, {Op}_5^{ \pm 1}(d), {Op}_6^{ \pm 1}$, $F$ is $(d,2)$. In the case of ${Op}_4$, $F$ is $(2,3)$. Therefore, if $T_d(\mu, \mu') < N$ then $F$ is $(d^N, 1+2^N)$. Similarly, if we have a bound for the area of a generator $M$, then after one transformation it might decrease by at most $2$, so that it becomes $\frac{M}{2}$. Therefore $F$ is of type $(d^N, 1+2^N, 2^{-N}M)$.
\end {proof}

\begin{defn}
If $S$ is a set of presentations of the trivial group of length at most $l$ and of rank $k$ such that for any $\mu \neq \mu'$ in $S$, $T_d(\mu, \mu') > exp_m(l)$, we call $S$ $(l,m,k)_d$-disconnected. Let $M(l,m,k)_d$ denote the maximal size of $(l,m,k)_d$-disconnected sets.
\end{defn}

 We are going to show that for any $m$, $M(l,m,k)_d$ grows at least exponentially for $d\leq l$.

\begin {thm} \label{ch5:mainthmk}
For each $m$ there exists a constant $const(m)$ such that if $l>const(m)$ and $d\leq l$, then $M(l,m,8)_d > {1.18}^l$.
\end {thm}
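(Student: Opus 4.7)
The plan is to exhibit a family of more than $1.18^l$ presentations obtained by using the construction of Section \ref{ch5:construction} with two independently varying words, and to deduce the pairwise distance bound from Propositions \ref{ch5:effective} and \ref{ch5:tietzek}. First, I would fix the witness exponent $n$ so that both $|w_n| = O(2^n)$ is negligible compared to $l$ and the arithmetic hypotheses of Proposition \ref{ch5:effective} are satisfied: applying Proposition \ref{ch5:tietzek} with $\exp_m(l)$ moves supplies a map of type $(\bar N, \bar N, 2^{-\exp_m(l)}E_n)$ for $\bar N := \max(d^{\exp_m(l)}, 1 + 2^{\exp_m(l)})$, and the requirements $2(l\bar N)^{21} < E_{n-1}$ and $2^{-\exp_m(l)}E_n > 2(l\bar N)^{21}$ are then met by the choice $n = m + O(\log^* l)$.

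Second, for each ordered pair $(v_1, v_2)$ of syllable words of length $N$ in $\{y, yx\}$ satisfying $v_1 \leq v_2$ in a fixed total ordering, I would consider
\[
\mu_{v_1,v_2} := H_{v_1} \underset{s=\hat x,\; x=\hat s}{*} \hat H_{v_2},
\]
a presentation of the trivial group with $8$ generators and length $l(\mu_{v_1,v_2}) = 36 + 2|v_1| + 2|v_2| + 4|w_n| \leq 36 + 8N + 4|w_n|$. Each generator has area at least $E_n$ in $\mu_{v_1,v_2}$, since the remark after the construction gives $Area_{\mu_{v_1,v_2}}(x) \geq Area_G(v_1 w_n^{-1} v_1^{-1} w_n) \geq E_n$ via Lemma \ref{ch5:s2}, with analogous calculations for the remaining seven generators. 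If $T_d(\mu_{v_1,v_2}, \mu_{v_1',v_2'}) \leq \exp_m(l)$ for two such pairs, Proposition \ref{ch5:tietzek} supplies an effective homomorphism $F$, and one adapts the chain of reductions in Lemmas \ref{ch5:aut1}--\ref{ch5:aut10} to the two-parameter setting: either $F$ already has $F(x), F(y), F(t), F(s)$ in $H_{v_1'}$ and $F(\hat x), F(\hat y), F(\hat t), F(\hat s)$ in $\hat H_{v_2'}$, or $F$ is composed with the swap isomorphism $\mu_{v_1',v_2'} \cong \mu_{v_2',v_1'}$ to achieve this. The final step via Lemma \ref{ch5:s5} then forces $\{v_1, v_2\} = \{v_1', v_2'\}$, which together with the ordering restriction yields $(v_1, v_2) = (v_1', v_2')$, contradicting the assumption.

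Finally, one counts at least $2^{2N-1}$ pairs $(v_1, v_2)$ with $v_1 \leq v_2$, hence that many presentations of length at most $l$ for $N = \lfloor (l - 36 - 4|w_n|)/8 \rfloor$. Since $2^{1/4} = 1.18921\ldots > 1.18$, we obtain $2^{2N-1} > 1.18^l$ for all $l > const(m)$. The main technical obstacle is the adaptation of Section \ref{ch5:pseudo-groups} to the two-parameter setting: no genuinely new ideas are needed because the lemmas of Section \ref{ch5:lemmas} treat each amalgamated factor independently and the two factors play symmetric roles under the swap, but the bookkeeping of which generators land in which factor must be tracked carefully throughout the chain of reductions.
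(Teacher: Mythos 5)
Your proposal takes a genuinely different route from the paper. The paper fixes a single variable word $v$ and uses the symmetric presentations $\mu_v = H_v *_{s=\hat x,\, x=\hat s} \hat H_v$ (with the same $v$ in both factors), letting $v$ range over syllable words of letter length up to roughly $l/2$; the distance argument is then Proposition~\ref{ch5:effective} applied directly after Proposition~\ref{ch5:tietzek}, and the count is $\approx 2^{0.99l/4}$. You instead propose an \emph{asymmetric} two-parameter family $\mu_{v_1,v_2} = H_{v_1} * \hat H_{v_2}$ with independent shorter words, counting ordered pairs up to the swap, which gives a comparable $2^{2N-1}$ count. The arithmetic choice $n = m + O(\log^* l)$ is sharper than the paper's $n<\log_2 l - 20$ but equally sufficient.

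However, the claim that the chain of Lemmas~\ref{ch5:aut1}--\ref{ch5:aut10} and Proposition~\ref{ch5:effective} adapts to the two-parameter setting with ``no genuinely new ideas'' glosses over two real issues, both created precisely by breaking the symmetry. First, the ``transposition of the two factors'' that appears in the proof of Lemma~\ref{ch5:aut1} (``Define $F'$ to be either $F^{(m)}$ or $\hat F\circ F^{(m)}$'') is an \emph{effective automorphism} of $[\mu_2]$ only because $\mu_v$ is symmetric: the hat swap permutes the relators of $\mu_v$, so areas are preserved. When the target is $\mu_{v_1',v_2'}$ with $v_1'\ne v_2'$, the hat swap sends the relator $s^{-1}v_1' w_n^{-1}v_1'^{-1}w_n s t^{-1}$ to the word $\hat s^{-1}\hat v_1'\hat w_n^{-1}\hat v_1'^{-1}\hat w_n\hat s\hat t^{-1}$, which is \emph{not} a relator of $\mu_{v_1',v_2'}$; the composed map is effective only with respect to the different target presentation $\mu_{v_2',v_1'}$, and the whole chain must be re-interpreted in that target. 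Second, and more importantly, Proposition~\ref{ch5:effective} as proved constrains only the unhatted $v$: after normalizing $F'$ one compares the relation $s^{-1}v_1w_n^{-1}v_1^{-1}w_n s t^{-1}$ with the $s$-relation of the target and invokes Lemma~\ref{ch5:s5}. In the symmetric case, $v_1 = v_2$ already implies the presentations coincide. In your setting you additionally need $v_2 = v_2'$ (or $v_1'$), which is a statement about the \emph{hatted} generators that the paper never establishes; you would need to run an entire mirror version of the chain for $\hat x, \hat y, \hat t, \hat s$, and these conjugations can destroy the normalization $F'(x)=x,\ F'(y)=y,\dots$ already achieved. Nothing in the proposal explains how to control this interaction. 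The paper's design choice---a single $v$ appearing in both factors---is what makes the final step close in one pass; your asymmetric variant buys nothing in the count (both are on the order of $2^{l/4}$) while introducing a genuine extra step that would need to be carried out in detail.
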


\begin {proof}
Choose the smallest $n$ such that $E_{n-1}  > d^{\exp_{m}(22l) }$. This inequality is implied by $E_{n-3}  > 2 \log_2 (22 \exp_{m}(l)) $, therefore $n$ is less than $log_2(l)-20$ for large enough $l$.

\par

Let $S_{n,l} = \{ \mu_{v} | l(\mu_{v}) \leq l \}$, where $v$ is any word of the form described in Lemma \ref{ch5:solitar}. Note that $n$ was already used in the definition of $\mu_v$.
Suppose $T_d(\mu,\mu') \leq exp_m(l)$, then by Proposition \ref{ch5:tietzek} there exits $F: [\mu] \to [\mu']$ of type $(d^{\exp_m(l)}, 1+ exp_{m+1}(l), \frac{E_n}{exp_{m+1}(l)} )$ (we used here the fact that the identity map on $[\mu]$ is $(2,2,E_n)$). To apply Proposition \ref{ch5:effective} to $F$ we need to check that $2(l (d^{\exp_{m}(l))} )^{21} < E_{n-1} $ and $\frac{E_n}{exp_{m+1}(l)} > 2(l (d^{\exp_{m}(l))} )^{21}$. The first inequality follows because $2(l (d^{\exp_{m}(l))} )^{21}  < d^{\exp_{m}(22l) } < E_{n-1}$ by the choice of $n$ (for large enough $l$), the second one holds because $E_n > (E_{n-1})^2$. Therefore $\mu = \mu'$ and $S_{n,l}$ is $(l,m,8)_d$-disconnected for large enough $l$.

\par

Now we want to estimate the size of $S_{n,l}$. Recall that $l(\mu_v)<2l(v)+200 \cdot 2^n+20$. Using our estimate for $n$ we obtain $l(\mu_v)<2l(v)+0.01 l$. Therefore to make sure $\mu_v \in S_{n,l}$, it is enough to make $l(v) < \frac {0.99l}{2}$. There are at least $2^{0.99l/4} > 1.18^{l}$ such $v$.
\end {proof}

We use this theorem in the following application, for which we need a definition.

\begin{defn}

Let $\Gamma_{l,m,k}$ be a graph, the vertices of which are the balanced presentations of the trivial group of rank $k$ and of length at most $l$. Any two presentations are connected by an edge if they require at most $exp_m(l)$ Tietze transformations (for $d=l$) to go from one to another.
\end{defn}

\begin {thm}
For each $m$, there exists a constant $const(m)$ such that if $l>const(m)$, then the number of connected components of $\Gamma_{l,m,8}$ is at least ${1.18}^l$.
\end {thm}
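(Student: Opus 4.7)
The plan is to combine Theorem \ref{ch5:mainthmk} with a crude vertex count of $\Gamma_{l,m,8}$. The key observation is that any two presentations in the same connected component of $\Gamma_{l,m,8}$ are joined by a path whose number of edges is bounded by the total number of vertices, so their $T_l$-distance is at most (vertex count) $\cdot \exp_m(l)$; by choosing presentations that are much farther apart than this bound, they are forced to lie in distinct components.

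First, I would invoke Theorem \ref{ch5:mainthmk} with $m$ replaced by $m+2$ and with $d = l$. For $l > const(m+2)$, this produces a set $S$ of more than $1.18^l$ balanced presentations with $8$ generators and length $\leq l$ such that $T_l(\mu, \mu') > \exp_{m+2}(l)$ for every pair of distinct $\mu, \mu' \in S$. Second, I would count the vertices $V$ of $\Gamma_{l,m,8}$: any balanced presentation of length $\leq l$ with $8$ generators is encoded by a string of at most $l$ symbols over the alphabet consisting of the $16$ signed generators together with a relator separator, yielding the crude bound $V \leq 17^l$.

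Then I would run the contradiction. Suppose distinct $\mu, \mu' \in S$ lie in the same connected component of $\Gamma_{l,m,8}$. A simple path from $\mu$ to $\mu'$ uses at most $V - 1 < 17^l$ edges, each representing a Tietze distance (with $d = l$) of at most $\exp_m(l)$, so by the triangle inequality $T_l(\mu, \mu') \leq 17^l \cdot \exp_m(l)$. For large $l$ this quantity is vastly smaller than $\exp_{m+2}(l) = 2^{2^{\exp_m(l)}}$, since two additional levels of the tower dwarf a singly-exponential factor. This contradicts $T_l(\mu, \mu') > \exp_{m+2}(l)$, and consequently distinct elements of $S$ lie in distinct components, producing more than $1.18^l$ components of $\Gamma_{l,m,8}$.

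All the substantive work is already contained in Theorem \ref{ch5:mainthmk}; the one subtlety in this deduction is the choice of index jump. Jumping only from $m$ to $m+1$ is insufficient when $m = 0$, since one would then have to beat $17^l \cdot l$ by $\exp_1(l) = 2^l$; leaving two levels of headroom in the tower (i.e.\ using $m+2$) is enough to absorb the singly-exponential vertex-count factor uniformly in $m$.
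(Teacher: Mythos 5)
Your proof is correct and follows the same route as the paper: bound the number of vertices of $\Gamma_{l,m,8}$ by a single exponential in $l$, deduce that within a component the $T_l$-distance between any two vertices is at most that vertex count times the edge threshold $\exp_m(l)$, and invoke Theorem \ref{ch5:mainthmk} at a higher tower level so the constructed presentations are forced into distinct components. You did catch one genuine subtlety that the paper glosses over: the paper jumps only from $m$ to $m+1$ and asserts $\exp_{m+1}(l) > 8^l\exp_m(l)$ for large $l$, which fails at $m=0$ (it would require $2^l > 8^l\cdot l$); your jump to $m+2$ repairs this uniformly in $m$, and is costless since Theorem \ref{ch5:mainthmk} holds for every $m$.
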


\begin {proof}
We can make a crude estimate that the number of vertices of $\Gamma_{l,m,8}$ is less than $8^l$. Therefore, if two vertices $\mu, \mu'$ are connected by some path, then $T_{l}(\mu, \mu') < 8^l exp_m(l)$. Noting that $exp_{m+1}(l) > 8^l exp_{m}(l)$ for large enough $l$, we see that this theorem follows from $M(l,m+1,8)_{l}> {1.18}^l$ (Theorem \ref{ch5:mainthmk}).

\end {proof}

This theorem will give rise to exponentially many triangulations of the standard $4$-sphere. In order to obtain triangulations of an arbitrary compact manifold (with fundamental group $P$), we will need the following proposition.

\begin {thm} \label{nontrivial}
Let $P$ be a finite presentation. For every $m$, if we choose $l$ large enough, the ${\lfloor 1.18 \rfloor }^l$ presentations $\mu_v$ from the previous theorem have the property that $P * \mu_v$ are pairwise $exp_m(l)$ distant.
\end {thm}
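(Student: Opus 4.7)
The strategy is to convert a hypothetical short Tietze equivalence between $P*\mu_v$ and $P*\mu_{v'}$ into an effective homomorphism $\mu_v \to \mu_{v'}$, and then derive a contradiction from Proposition \ref{ch5:effective}.

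Suppose, for a contradiction, that for some $v\neq v'$ among the presentations from the previous theorem one has $T_l(P*\mu_v, P*\mu_{v'}) \leq \exp_m(l)$. The proof of Proposition \ref{ch5:tietzek} goes through for presentations of an arbitrary (not necessarily trivial) group, because it merely tracks each generator along the Tietze sequence and the area bound is meaningful only for those generators that represent the identity. For a generator $a$ of $\mu_v$ viewed as an element of $P*\mu_v$, we have $Area_{P*\mu_v}(a) = Area_{\mu_v}(a) > E_n$, since a word in the alphabet of a single free factor is trivial in the free product if and only if it is already trivial in that factor, with a minimal van Kampen diagram consisting of cells only from that factor. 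Proposition \ref{ch5:tietzek} thus supplies an effective map $F\colon [P*\mu_v]\to [P*\mu_{v'}]$ of type $(L,N)$ with $L=l^{\exp_m(l)}$, $N=1+\exp_{m+1}(l)$, and with $Area_{P*\mu_{v'}}(F(a))>E_n/\exp_{m+1}(l)$ for every generator $a$ of $\mu_v$.

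Let $\rho\colon [P*\mu_{v'}]\to [\mu_{v'}]$ be the free-group retraction sending every $P$-generator to the empty word and fixing every $\mu_{v'}$-generator, and set $F':=\rho\circ F|_{[\mu_v]}\colon [\mu_v]\to [\mu_{v'}]$. The length and relator-area bounds on $F'$ are immediate: $\rho$ only shortens words, and a van Kampen diagram over $P*\mu_{v'}$ decomposes along edges into $P$-subdiagrams and $\mu_{v'}$-subdiagrams (no relator mixes letters between the two factors), so collapsing every $P$-edge to a point yields a van Kampen diagram for $\rho(F(r))=F'(r)$ over $\mu_{v'}$ with no more cells. For the area lower bound on generator images, write $F(a)=p_0 m_1 p_1\cdots m_k p_k$ in free-product syllable form with $p_i\in[P]$ and $m_j\in[\mu_{v'}]$. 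Applying the companion retraction $q\colon [P*\mu_{v'}]\to [P]$ (killing $\mu_{v'}$-generators) to the identity $F(a)=_{P*\mu_{v'}}1$ shows that $q(F(a))=p_0 p_1\cdots p_k$ is trivial in $P$, so it has area at most $\delta_P(L)$, where $\delta_P$ denotes the Dehn function of the fixed presentation $P$. Gluing a minimal van Kampen diagram for $F'(a)=m_1\cdots m_k$ over $\mu_{v'}$ with one for $p_0\cdots p_k$ over $P$ produces a van Kampen diagram for $F(a)$ over $P*\mu_{v'}$, giving
\begin{equation*}
Area_{P*\mu_{v'}}(F(a)) \;\leq\; Area_{\mu_{v'}}(F'(a))+\delta_P(L),
\end{equation*}
and therefore $Area_{\mu_{v'}}(F'(a)) > E_n/\exp_{m+1}(l) - \delta_P(L)$.

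Since $P$ is fixed, $\delta_P$ is a specific function of its input, while $E_n=\exp_n(1)$ with $n=\lfloor\log_2 l\rfloor-20$ grows, as $l\to\infty$, like a tower of height $\approx\log_2 l$. For any $P$ whose Dehn function is at most some iterated exponential — which covers all $P$ arising in the applications to Theorem A — one can therefore choose $l_0(P,m)$ so large that for $l>l_0$ the right-hand side of the displayed inequality exceeds $2(lN)^{21}$ while simultaneously $2(lN)^{21}<E_{n-1}$. Proposition \ref{ch5:effective} applied to $F'$ then forces $v=v'$, the desired contradiction. The main obstacle is precisely this final growth-rate comparison $E_n \gg \delta_P(L)$: it is the only place where the particular presentation $P$ enters quantitatively, and absorbing its contribution is what forces the threshold $l_0$ to depend on $P$.
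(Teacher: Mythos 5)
Your strategy—retracting $F$ through the projection $\rho\colon [P*\mu_{v'}]\to[\mu_{v'}]$ and absorbing the $P$-cost via the Dehn function $\delta_P$—is precisely the approach the paper explicitly rejects. The paper's proof says: ``We would like to define $F'$ by contracting those regions of the diagram corresponding to the factor $P$. Unfortunately, since the Dehn function of $P$ could be rapidly growing, we can not guarantee that those regions are going to be smaller than the regions of $\mu'$ even for very large $l$.'' Your hedge that ``any $P$ whose Dehn function is at most some iterated exponential \ldots covers all $P$ arising in the applications to Theorem A'' is false: Theorem A is stated for an \emph{arbitrary} compact $4$-manifold $M$, and every finitely presented group is $\pi_1$ of some closed smooth $4$-manifold (take a presentation $2$-complex, embed it in $\mathbb{R}^5$, and pass to the boundary of a regular neighborhood). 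In particular, $P$ may present a group with unsolvable word problem, whose Dehn function is not bounded by any computable function, let alone any iterated exponential; in that case $\delta_P(L)$ with $L=l^{\exp_m(l)}$ eventually swamps $E_n/\exp_{m+1}(l)$ for every $l$, and your inequality yields nothing. The threshold $l_0(P,m)$ you propose simply does not exist for such $P$. The paper avoids this altogether by \emph{not} collapsing the $P$-regions: it keeps the map into $[P*\mu_{v'}]$ and introduces the modified type $(\ ,\ ,\ )_2$, in which only $\mu'$-cells are counted toward area and only $\mu$-generators are required to have large area, and then observes that the Lemmas of Section 4 and the proof of Theorem \ref{ch5:mainthmk} go through with $(\ ,\ ,\ )$ replaced by $(\ ,\ ,\ )_2$ throughout. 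No Dehn-function hypothesis on $P$ is needed.

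A secondary problem is the inequality $Area_{P*\mu_{v'}}(F(a))\le Area_{\mu_{v'}}(F'(a))+\delta_P(L)$ itself. The two diagrams you want to glue have boundaries $m_1\cdots m_k$ and $p_0\cdots p_k$, while $F(a)=p_0m_1p_1\cdots m_kp_k$ is an \emph{interleaving} of those words. Identifying the $k$ boundary vertices of the $\mu_{v'}$-disk with the corresponding boundary vertices of the $P$-disk does not produce a disk for $k\ge2$ (an Euler-characteristic count gives $\chi=0$ already for $k=2$), so no direct ``bouquet'' gluing yields a van Kampen diagram for $F(a)$. The decomposition of a diagram over a free product into monochromatic regions gives the \emph{lower} bound $Area_{P*\mu_{v'}}(F(a))\ge Area_{\mu_{v'}}(F'(a))+Area_P(q(F(a)))$, which is the wrong direction for what you need, and the reverse direction would require a more careful inductive argument over syllable decompositions (and, at best, a worse constant). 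So even in the restricted setting where $\delta_P$ is controlled, this step of the write-up needs to be repaired.
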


\begin {proof}

From Proposition \ref{ch5:tietzek}, we obtain $F: [P * \mu] \to [P * \mu']$ of type $(d^N, 1+2^N)$, where $N$ is the distance. We can not claim that $F$ is $(d^N, 1+2^N, 2^{-N}M)$ anymore, because a generator from $P$ might have a small area; but for the generators of $\mu$, the area of their images under $F$ is still large. Consider a van Kampen diagram for $F(x)$. Since there is no amalgamation in the product $P * \mu'$, the diagram for $F(x)$ is a bouquet of diagrams over the individual factors. We would like to define $F'$ by contracting those regions of the diagram corresponding to the factor $P$. Unfortunately, since the Dehn function of $P$ could be rapidly growing, we can not guarantee that those regions are going to be smaller than the regions of $\mu'$ even for very large $l$, thus $F'$ might not be of type $(d^N, 1+2^N)$ any longer.

\par

We can say $F': [P * \mu] \to [P * \mu']$ is of type $(d^N, 1+2^N, 2^{-N}M)_2$ (``$2$'' stands for the second factor), where we count only cells coming from $\mu'$ and consider only the images of the generators of $\mu$ for the $2^{-N}M$ estimate. Then we can go with all the proofs of the previous section with $(\ ,\ ,\ )$ replaced by $(\ ,\ ,\ )_2$ and with the proof of Theorem \ref{ch5:mainthmk}, thus concluding that $N$ has to be large.

\end {proof}

For one of the applications in the last section it is not convenient to use the distance between presentations defined above, but instead we will use the notion of {\it effective
isomorphism}.  

\begin{defn} \label{effectiveiso}
Two presentations $\mu_1, \mu_2$ are $(L,N)$-effectively isomorphic, if there exists a pair of $(L, N)$-maps $F:[\mu_1] \longrightarrow [\mu_2]$ and $G:[\mu_2] \longrightarrow [\mu_1]$ such that for each generator $a \in \mu_1$,  $(G \circ F)(a)a^{-1}$ has area less than $N$ in $\mu_1$ and for each generator $b \in \mu_2$,  $(F \circ G)(b)b^{-1}$ has area less than $N$ in $\mu_2$. Such maps are called $(L,N)$-effective isomorphisms.
\end{defn}

\begin{rem} \label{effectivetietze}

The classical proof of the fact
that a pair of isomorphic finite presentations can be transformed one into the other by a finite sequence of Tietze transformations (cf. \cite{bhp} or Lemma 4.12 in \cite{holt} for a readily available text) leads to an explicit upper bound for the number of Tietze transformations required to connect two $(L, N)$ -effectively isomorphic finitely presented groups. This number is either polynomial or exponential in $L_1+L_2+$ the sum of the lengths of the finite presentations of $\mu_1$ and $\mu_2$ depending on a particular version of the definition of Tietze transformations. Therefore as a corollary of Theorem \ref{ch5:mainthmk} we get exponentially many presentations which are not $(exp_m(l),exp_m(l))$-effectively isomorphic.

\end{rem}

\section{Superexponential Growth} \label{superexp}

\begin{lem}
 Let $P=\langle f_1\ldots f_m\vert r_1,\ldots r_k \rangle$ be a finite presentation of a group
$G$. Then there exists another finite presentation $P'=\langle f'_1,\ldots, f'_M
\vert r'_1\ldots r'_K \rangle$ of the same group (for some $M$ and $K$) with the following properties:

\begin {enumerate}
\item Each generator $f'_i$ appears in at most $3k$ words $r'_j$;
\item Each relator $r'_j$ has length two or three;
\item $l(P')\leq const \ l(P)$, where $const$ is an absolute constant;
\item $K-M=k-m.$ In particular, if $P$ is balanced then $P'$ is balanced.
\item $P'$ is constructed from $P$ by means of an explicit algorithm. The number
of Tietze transormations (for $d=\log l(P)$) required to transform $P$ into $P'$ is bounded by
$const \ l(P)$, where $const$ is an absolute constant.
\item Consider multigraph $G(P')$ such that set of its vertices coincides with the set of generators of $P'$,
and two vertices are connected by an edge if and only if the corresponding generators or their inverses 
both appear in a
relator. If a relator has a form $a^2b$ or $aba$, then the graph contains a loop based at $a$ and two copies of the edge $ab$.
Also, one has a copy of an edge for each occurence of the corresponding pair of generators in a relator.
Then the diameter of each component of this graph does not exceed $const\ m\ln l(P)$. 
\end {enumerate}

Alternatively, one can remove the last condition but ensure that $f'_i$ appears in at most $3$ 
words $r'_j$.

\end{lem}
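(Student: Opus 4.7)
The plan is to encode each long relator $r_j$ of $P$ by a balanced binary tree of length-$2$ and length-$3$ relations, using fresh auxiliary generators to abbreviate intermediate subwords. This is a variant of the standard reduction to length-$3$ presentations; the novelty is the bookkeeping needed for the graph-theoretic constraint (6).

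\emph{Tree construction.} For each $r_j = a_{j,1}\cdots a_{j,\ell_j}$, build a balanced binary tree $T_j$ with $\ell_j$ leaves labelled in order by the letters of $r_j$. For every internal non-root node $v$ with children $v_L, v_R$, introduce a fresh generator $b_{j,v}$ together with the length-$3$ relator $b_{j,v}^{-1} u_{v_L} u_{v_R}$, where $u_{v_\star} = b_{j,v_\star}$ if $v_\star$ is internal and is the leaf's label otherwise. The root contributes instead the single length-$2$ relator $u_{v_L} u_{v_R}$ encoding $r_j = 1$. Replacing $r_j$ by this tree adds $\ell_j - 2$ generators and $\ell_j - 2$ net relators, so $K - M = k - m$ is preserved (property~(4)); every new relator has length $2$ or $3$ (property~(2)); and the total length is $O(l(P))$ (property~(3)).

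\emph{Tietze cost.} Each new generator $b_{j,v}$ is created by an $\mathrm{Op}_5(d)$ move with $d \geq \log l(P)$, which suffices once the subword is expressed in terms of previously abbreviated generators (proceeding bottom-up, each $b_{j,v}$ abbreviates a word of length $2$ or $3$ in the current alphabet). The original $r_j$ is then rewritten into the root's length-$2$ relator via $\mathrm{Op}_1$ and $\mathrm{Op}_4$ moves. The total cost is $O(l(P))$ Tietze transformations, giving property~(5).

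\emph{Diameter.} Within $T_j$ the induced subgraph of $G(P')$ is the tree itself augmented by triangle edges from each length-$3$ relator, so its diameter is $O(\log \ell_j) = O(\log l(P))$. Distinct trees $T_j, T_{j'}$ meet only at shared leaf labels, namely the original generators $f_i$. A connected component of $G(P')$ is therefore a union of trees glued at their leaves; any two of its vertices are joined by a path alternating intra-tree segments ($O(\log l(P))$ each) with hops through original-generator hubs. Since a shortest path visits each of the at most $m$ hubs at most once, the diameter is $O(m \log l(P))$, giving property~(6).

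\emph{Occurrences and the alternative.} Each internal tree-generator $b_{j,v}$ appears in at most three new relators (its own defining relation and, when not the root, its parent's, together with the triangle contribution). For original generators one preprocesses $P$ by introducing fresh copies $f_i^{(*)}$ for extra occurrences within a single relator, tied to $f_i$ by length-$2$ relations. Without further splitting, each $f_i$ is contained in at most $3k$ new relators, establishing property~(1). For the alternative version one instead arranges the copies into balanced binary sub-trees with $f_i$ at the root, so that every generator occurs in at most three relators; this produces the sharper bound $3$ at the cost of losing the uniform $O(m\log l(P))$ diameter bound, because the copy-trees insert additional levels between $f_i$ and its usages. \emph{The main obstacle} is precisely this genuine tension between conditions~(1) and~(6), which is the reason behind the dichotomy stated at the end of the lemma; resolving it amounts to choosing, for each original generator, the right compromise between copy-splitting (good for (1)) and flat reuse (good for (6)).
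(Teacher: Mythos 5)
Your stage-1 construction (one fresh generator per internal node of a balanced binary tree, all new relators of length $2$ or $3$, net change $K-M = k-m$) is essentially the paper's first stage, and your Tietze-cost estimate and diameter argument for a single tree are in the right spirit. However, there are two genuine gaps.

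First, the handling of repeated occurrences of an original generator $f_i$ does not actually establish property (1). You introduce fresh copies $f_i^{(*)}$ ``tied to $f_i$ by length-$2$ relations.'' If $f_i$ occurs $T$ times in a single relator tree and you tie $T-O(1)$ copies directly to $f_i$, then $f_i$ itself appears in $T-O(1)$ new relators, which can be as large as $l(P)$ --- far more than $3k$. The assertion that ``without further splitting, each $f_i$ is contained in at most $3k$ new relators'' simply does not follow. The paper avoids this by introducing, for each relator $r_i$ separately, a \emph{chain} $f = w_1,\ w_1 = w_2, \ldots, w_{T-4}=w_{T-3}$ and distributing the $w_j$'s among the leaf occurrences so that $f$ and each $w_j$ lie in at most $3$ relators per original relator $r_i$; summing over the $k$ relators gives the bound $3k$. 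The chain is folded into the leaves of $T_j$, so the tree structure still provides the $O(\log l(r_j))$ shortcuts and the component diameter bound survives via the ``nerve over the hubs $f_l$'' argument.

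Second, your explanation of the final dichotomy misidentifies the mechanism. The paper's alternative is not a copy-tree construction; it is the observation that if the chain for each $f$ is built once \emph{globally} (across all relators) rather than once per relator $r_i$, then each generator appears in at most $3$ relators, but the chain of copies now wanders through the $k$ different trees $T_j$, so the subgraphs $G_j$ no longer meet only at the original hubs $f_l$ and the nerve argument for the $O(m \ln l(P))$ diameter no longer applies. Your proposed copy-trees with $f_i$ at the root and balanced depth $O(\log l(P))$ would actually \emph{preserve} the diameter bound, not destroy it, so they do not explain the dichotomy; if anything they suggest a way to get both (1) with bound $3$ and (6) simultaneously, which is not what the paper asserts. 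In short, the basic tree idea matches the paper, but your argument for property (1) is incorrect as written, and the explanation of the alternative at the end is not the right one.
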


\begin{proof}
On the first stage we are going to rewrite each relation $r_i=e$ of length greater than three, where
$r_i=f_{i_1}^{\epsilon_1}
f_{i_2}^{\epsilon_2}\ldots f_{i_{l(r_i)}}^{\epsilon_{l(r_i)}}$ with $\epsilon_i=1$ or $-1$
as a sequence of $\sim l(r_i)$ new relations involving $\sim l(r_i)$ new generators.
We divide $r_i$ into consecutive words of length $2$ (followed by a single letter if $l(r_i)$ is odd.
We introduce a new generator and the corresponding relation for each word of length two that
appears in one of relators $r_i$. We plug new generators into words $r_i$ reducing their length
by approximately a factor of two. Then we repeat this process until after some step all relations
will have length at most three. The number of the required steps will be bounded by $O(\ln \max_i l(r_i))$.
Denote the number of steps required to turn $r_i$ into a relator of length $\leq 3$ by $s_i$, where
$s_i=0$ is $l(r_i)\leq 3$. Denote the generators added
at step $j$ by $f_{lj}$.
\par
The resulting finite presentation has property 2). In order to ensure
property 1) we are going to perform a second stage of modification of the finite
presentation. This stage would consist of $k$ independent steps, the $i$th step corresponding to
the relator $r_i$ in $P$. One by one consider all old or new generators $f=f_l$ or $f_{lj}$ that 
enter $r_i$. 
Consider all occurences of $f$ in the set of relators that includes the new version of $r_i$ as well as all relators
obtained on the first $s_i$ steps of the previous stage while considering $r_i$. Assume
that the number of occurences of $f$ in these relators is $T$. If $T\leq 3$, we do not do
anything. If $T>3$, then we introduce $T-3$ new generators
$w_1,\ldots, w_{T-3}$
together with new relations $f=w_1, w_1=w_2,\ldots , w_{T-4}=w_{T-3}$. Then
we replace all but four occurrences of $f$ in (old) relations
by different $w_i$, $i\in\{1,\ldots, T-4\}$ and another two occurrences by $w_{T-3}$. It is easy to see
that the new set of generators and relators satisfies property 1. 
\par
Alternatively, we can perform this stage not in $i$ ``disjoint'' steps corresponding to different
$r_i$, but immediately for all occurences of $f$ into all (rewritten) initial relators $r_i$ as well as relators
added on the first stage. If we do that, then each generator of the resulting presentation will appear in at most three relators.
\par
In both cases properties 3, 4 and 5 can be immediately seen for our construction. Yet only the first
version of the construction on stage $2$ yields the last property. Indeed, consider 
a component of $G(P')$. Take one of its vertices that corresponds to an original generator
$f_l$ of $P$. (It is easy to see that such a vertex exists.) Let $I_l$ 
denote the set of $i$ such that $f_l$
appears in $r_i$. For each $i\in I$ consider the induced subgraph $G_i$ of $G(P')$ such that its
set of vertices coincides with the set of generators corresponding to $r_i$ obtained by the end of stage 2
of the construction of $P'$. One of these vertices corresponds to $f_l$. The diameter of this graph
will be bounded by $O(\ln l(r_i))$, as each generator appears in a path of length $\sim \log_2 l(r_i)$
that corresponds to its appearnces in new generators that are being introduced at each of $s_i$
steps of the construction. Each pair of these paths either merge or can be connected by an edge that comes 
from $r_i$ being considered as a new relator of length $\leq 3$ after $s_i$th step of stage 1.
Therefore, the diameter of $G_i$ is also $O(\ln l(r_i))$. Consider $G^l=\bigcup_{i\in I_l} G_i$.
This is the union of all $G_i$ that intersect at the vertex corresponding to $f_l$; its diameter
is at most $2\max_{i\in I_l}diam G_i$. Now its is easy to see that the considered component
is the union of $G^l$ over all $l$ in a subset of $\{1, \ldots , m\}$, which immediately
implies the last property.
\end{proof}

\begin{defn}
Assume that a pair of finite presentations $P$ and $P'$ of the same
group satisfy properties 3, 4 and 5 in the previous lemma. In this case we say that $P'$ is an effective rewriting of $P$.
\end{defn}

\begin{defn}
If a finite presentation of a group satisfies properties
1, 2 and 6 from the previous lemma we say that it is nice.
\end{defn}

So in our new notations the previous lemma asserts that each finite presentation of a group
can be effectively rewritten as a nice presentation.

In the next lemma $N$ denotes a potentially large variable parameter and $C_1, C_2, C_3$ should be interpreted as constant parameters
that do not increase as $N \to \infty$.

\begin{lem}
Let $P$ be a presentation of a group $G$ with $C_1$ generators and $C_2$ relators of length $\leq C_3N\ln N$. There exists
an effective rewriting $P_0$ of $P$ such that $L(P_0)\leq C_4N$, where $C_4=C_4(C_1,C_2,C_3)$
does not depend on $N$. 
\end{lem}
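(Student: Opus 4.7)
The plan is to apply a standard block-substitution (grammar-based) compression to each of the long relators of $P$, exploiting the fact that the alphabet has only $C_1$ letters. Fix the block size $b := \lfloor \tfrac12 \log_{2C_1}(C_3 N\ln N)\rfloor$, so that $b = \Theta(\ln N)$ and $(2C_1)^b \le \sqrt{C_3 N\ln N}$. For each relator $r_i$ of $P$, partition it into $\lceil l(r_i)/b\rceil$ consecutive non-overlapping blocks of length $b$ (padding the last block with trivial pairs $x_1x_1^{-1}$ if necessary). For every distinct block $\sigma$ that appears somewhere across the list of relators, introduce one new generator $g_\sigma$ together with the defining relator $g_\sigma\sigma^{-1}$ using the Tietze move $Op_5(d)$; the requirement $d \ge b+2$ is compatible with $d = \log l(P) = \Theta(\log N)$. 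Then use this defining relation, together with elementary moves from $Op_1$--$Op_4$, to substitute each occurrence of $\sigma$ inside the original relators by $g_\sigma$. Call the resulting presentation $P_0$.

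The counting runs as follows. Let $D$ denote the number of distinct blocks that actually appear; then
\[
D \le \min\bigl((2C_1)^b,\; C_2\lceil C_3 N\ln N/b\rceil\bigr) = O\bigl(\sqrt{N\ln N}\bigr).
\]
Hence the number of generators of $P_0$ is $C_1 + D = o(N)$; the $C_2$ rewritten relators have total length at most $C_2(\lceil C_3 N\ln N/b\rceil + 1) = O(N)$ (the $\ln N$ factor being absorbed by $b$); and the $D$ defining relators contribute total length at most $D(b+1) = O(\sqrt{N\ln N}\cdot \ln N) = O(\sqrt{N}\,\ln^{3/2} N) = o(N)$. Summing, $l(P_0) \le C_4 N$ for some constant $C_4 = C_4(C_1, C_2, C_3)$ and all sufficiently large $N$; for small $N$ one simply sets $P_0 = P$.

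It remains to verify the three conditions of an effective rewriting. Property (4) (balance) is immediate, since exactly $D$ generators and $D$ relators are added. Property (3) ($l(P_0) \le \mathrm{const}\cdot l(P)$) follows because either $l(P) \ge N$, whence $l(P_0) = O(N) = O(l(P))$, or $l(P) < N$ and we take $P_0 = P$. For property (5), introducing each $g_\sigma$ costs a single $Op_5$ move, while substituting one block occurrence $\sigma \mapsto g_\sigma$ in a relator can be realized by $O(b)$ elementary moves: insert a trivial pair $g_\sigma g_\sigma^{-1}$ via $Op_1$, combine with the defining relator $g_\sigma\sigma^{-1}$ via $Op_4$ to swap $\sigma$ for $g_\sigma$, then cancel the residual pair via $Op_1^{-1}$. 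Summing over the $O(N\ln N/b)$ substitutions per relator and the $C_2$ relators yields $O(N\ln N) = O(l(P))$ Tietze moves in total. The most delicate part of the argument is this last step: checking that the block substitution can indeed be carried out within the prescribed move set with $d = \log l(P)$ and using only $O(b)$ moves per block. This is a routine but careful bookkeeping exercise, and is the only nontrivial verification in the proof.
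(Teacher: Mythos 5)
Your argument uses the same block\hyp{}abbreviation idea as the paper: introduce new one-letter generators abbreviating short words over the original alphabet, then rewrite each long relator as an $O(N)$-length word in the enlarged alphabet, with the new defining relators contributing only $o(N)$ to the total length. Two minor points of comparison worth noting. First, you only abbreviate the blocks that actually occur, whereas the paper abbreviates \emph{all} words up to a given length; moreover your block length $b=\Theta\bigl(\log_{2C_1}N\bigr)$ correctly accounts for the alphabet size, while the paper's stated cutoff $\lfloor\log_2(N/\ln N)\rfloor$ gives $O(N/\ln N)$ words of that length only when $C_1=1$ --- for $C_1>1$ the count is $(N/\ln N)^{\log_2(2C_1)}$, so the exponent base needs to be $2C_1$, exactly the adjustment you made. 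Second, your sketch of the per-block Tietze cost is slightly imprecise as written (\,$Op_4$ multiplies whole relators, so one should use $Op_2$ to bring the block $\sigma$ to the end of the relator, cyclically permute the defining relator $g_\sigma\sigma^{-1}$ to $\sigma^{-1}g_\sigma$, apply $Op_4$, then cancel $2b$ pairs\,), but the resulting cost is still $O(b)$ per block and $O(l(P))$ overall, so your conclusion stands and matches the paper's.
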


\begin{proof}
Introduce $O({N\over \ln N})$ new generators as one-letter abbreviations 
of all words in the original set of generators up to the length
 $[\log_2 {N\over \ln N}]=[\log_2e (\ln N-\ln \ln N)]$.
The total length of these new relations is bounded by $O(N)$.
Now all words of length $\leq C_3N\ln N$ in old generators can be (effectively) rewritten as words
 of length $\leq C_5\ N$ in new
generators. We would like to do this for each of the old relators. 
The resulting finite presentation will be $P'$.
\end{proof}

The ``savings'' in length came from the fact that $O({N\over \ln N})$ words of length 
$\lfloor \log_2{N\over\ln N}\rfloor$ are repeated on the average $O(\ln N)$ times in any
word of length $O(N\ln N)$, when we consider it as a sequence of $O(N)$ words of length
$\lfloor \log_2{N\over\ln N}\rfloor$ followed by a shorter word at the end. Therefore it saves space
to abbreviate these short words (by introducing new generators).  

For a given $N$, let $n=\lfloor N\ln N\rfloor$. Recall that Lemma 6.1 has two versions: one where $P'$
satisfies conditions 1-6, and another where it satisfies conditions 1-5, but each generator
enters at most $3$ relations.
Applying Lemma 6.4 to each of the
 $\lfloor 1.18^n\rfloor$ balanced finite presentations that we constructed above and then applying
the second version of Lemma 6.1 we get the Theorem C. If we use Lemma 6.4 and then proceed
as in the first version
 of Lemma 6.1 we will obtain the following modification of Theorem C that will be used to prove Theorem A
in the next section.

\begin{thm} \label{superexppr}
There exists $C>0$ and for each $m$ there exists $C(m)$
such that for each $N>C(m)$ there exists at least $\lfloor N^{CN}\rfloor$ balanced finite presentations
of the trivial group of length $\leq N$ such that the minimal number of the Tietze moves required to transform one of these presentations into
another is greater than $\exp_m(N)$. In particular, the graph
$\Gamma_{N,m,N}$ has at least $N^{CN}$ components. Further, one can ensure that in each
of these presentations $P'$ each relation has
length at most three, each generator
enters at most $12$ relations, and that the graph $G(P')$ introduced in Lemma 6.1(6) has diameter
not exceeding $const \ln N$ for some absolute constant $const$.
\end{thm}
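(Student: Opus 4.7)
The plan is to compose the exponential family from Theorem \ref{ch5:mainthmk}, the length-compression of Lemma 6.4, and the first version of Lemma 6.1. First, I invoke Theorem \ref{ch5:mainthmk} with $m+1$ in place of $m$ and length parameter $n=\lfloor (N/C^{*})\ln(N/C^{*})\rfloor$, where $C^{*}$ is an absolute constant to be fixed below. Using the four-generator four-relator family produced by Theorem B (via the reduction $\mu_{v}\to\mu_{v}^{0}$ from Section \ref{ch5:construction}), this yields at least $\lfloor 1.18^{n}\rfloor$ balanced trivial presentations of length $\leq n$ that are pairwise more than $\exp_{m+1}(n)$ apart in the $T_{d}$-metric for any $d\leq n$. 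Since $1.18^{n}=\exp(n\ln 1.18)$ and $n=\Theta(N\ln N)$, this family has size at least $N^{CN}$ for an absolute $C>0$ and all sufficiently large $N$.

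For each such $P$, I then apply Lemma 6.4 to obtain an effective rewriting $P_{0}$ of length $\leq C_{4}N$ (the input satisfies $C_{1}=C_{2}=4$ and absolute $C_{3}$, so $C_{4}$ is absolute), and then apply the first version of Lemma 6.1 to $P_{0}$ to produce a nice presentation $P'$. Property (2) gives relations of length $\leq 3$, Property (6) gives diameter $O(\ln N)$ for each component of $G(P')$, and Property (1), combined with the observation that the $O(N/\ln N)$ abbreviation relators introduced by Lemma 6.4 already have length $\leq 3$ and involve each generator only $O(1)$ times, gives the appearance bound $12=3\cdot 4$ per generator, where the $4$ traces back to the four Theorem B relators. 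By Property (5) of the effective-rewriting condition, each of these two rewriting steps costs $O(N\ln N)$ Tietze moves with $d\leq\log l(P)\leq N$. Choosing $C^{*}$ so that $l(P')\leq N$ yields at least $\lfloor N^{CN}\rfloor$ balanced presentations of length $\leq N$ with all the stated structural properties.

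Since each Tietze move changes pairwise $T_{N}$-distance by at most $1$, and the combined rewriting $P\mapsto P_{0}\mapsto P'$ costs $O(N\ln N)$ moves per presentation, the pairwise distances among the final $P'$ are at least $\exp_{m+1}(n)-O(N\ln N)>\exp_{m}(N)$ for $N$ large enough, which immediately implies that $\Gamma_{N,m,N}$ has at least $N^{CN}$ components. The main technical obstacle will be the bookkeeping in the application of Lemma 6.1 needed to verify the appearance bound of $12$: a priori Property (1) would only give $3k_{0}$ with $k_{0}=O(N/\ln N)$ the number of relators in $P_{0}$. The key point is that the $O(N/\ln N)$ abbreviation relators introduced by Lemma 6.4 already have length $\leq 3$, so Lemma 6.1's length-reduction stage acts trivially on them, and a separate count shows each generator enters $O(1)$ of them; hence the ``substantive'' value of $k$ driving Property (1) is the original $4$ from Theorem B, and similarly the effective number of generator types driving the diameter bound in Property (6) is $O(1)$, yielding the claimed $O(\ln N)$.
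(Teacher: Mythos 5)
Your high-level plan matches the paper's: start from the $\lfloor 1.18^n\rfloor$ presentations of Theorem \ref{ch5:mainthmk} with $n=\Theta(N\ln N)$, compress via Lemma 6.4, reshape via the first version of Lemma 6.1, and control the Tietze cost of the rewriting via Property 5. You also correctly locate the technical crux: the appearance bound of $12$ and the $O(\ln N)$ diameter bound for $G(P')$. But the way you dispose of that crux is where the proof breaks down. You claim that ``a separate count shows each generator enters $O(1)$ of'' the abbreviation relators and that ``the effective number of generator types driving the diameter bound in Property (6) is $O(1)$.'' Neither of these is true for a naive composition of Lemma 6.4 with Lemma 6.1, and the paper explicitly flags exactly this failure: after Lemma 6.4, $P_0$ has $O(N/\ln N)$ generators and relators, and Lemma 6.1's Property (1) then only gives an appearance bound of $O(k_0)=O(N/\ln N)$, while Property (6) gives a diameter bound of $O(m\ln l(P_0))=O\bigl((N/\ln N)\cdot\ln N\bigr)=O(N)$. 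Your ``separate count'' also does not hold at the object level: the abbreviation generators of Lemma 6.4 abbreviate a large dictionary of subwords, and a generator coding a short subword is reused as a half of many longer abbreviated words, so it appears in $\Theta(N/\ln N)$ abbreviation relators, not $O(1)$. There is no way to read off a ``substantive $k=4$'' from the composed construction without changing the construction itself.

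What the paper actually does (and what your proof is missing) is a genuine modification rather than a bookkeeping observation. One applies the abbreviation process of Lemma 6.4 \emph{separately to each of the four original relators}, producing four disjoint families of abbreviation generators, and then plugs this in as a new ``stage 0'' of the two-stage process in the proof of Lemma 6.1. On the subsequent stages, the four original relators $r_i$ of $P$ (not the $O(N/\ln N)$ abbreviation relators of $P_0$) are the relators driving the generator-duplication step, and every abbreviation generator belongs to the subgraph $G_i$ of exactly one $r_i$. This is what makes the appearance count come out to $3\times 4=12$ and keeps the diameter at $O(\ln N)$ (the four $G_i$'s each have logarithmic diameter, stage 0 contributes only an $O(\ln\ln(N/\ln N))$ summand, and the union over the four original generators stays logarithmic). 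Without this restructuring, the statement's last two clauses, ``each generator enters at most $12$ relations'' and ``$G(P')$ has diameter $\leq const\ln N$,'' are not established by your argument; the former is precisely what is needed for the bounded-degree embedding step in Theorem \ref{graphthm} and the triangulation construction of Section 7.
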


\begin{proof} Let $P$ be one of the finite presentations in Theorem B applied for $\lfloor N\ln N\rfloor$
instead of $N$.
Note that after the application of Lemma 6.4 the number of generators and relators of $P_0$ will become 
$O({N\over \ln N})$. Let $P_1$ is a finite presentation obtained from $P_0$ by applying Lemma 6.1.
Then the
diameter of the graph $G(P_1)$ will be bounded by $O(N)$. Further, some generators will enter
$O({N\over\ln N})$ relations. Fortunately, we can proceed somewhat differently. First, 
consider the process described in the proof of Lemma 6.4. We apply it
to each of four relators $r_i$ of $P$ separately. (As the result, we are going
to get four different generators denoting each ``short'' word
of length $\lfloor \log_2{N\over\ln N}\rfloor$.) 
We can plug this modification of $P$ in the process of modification introduced in the proof
of Lemma 6.1, and consider it
as stage 0 of the new modifed process. On stage 1 we consider
all original relations, as well as the relations added at stage 0 and break them into short
relations of length at most $3$. Now on the second stage we use four relators $r_i$ of 
the original presentation $P$. All new generators code subwords of exactly one of these four
relators. The original generators of $P$ can still enter $3\times 4=12$ relators ($3$ relators
for each of the original four relators). Stage 0 of the process can add a summand of at most
$O(\ln\ln{N\over \ln N})$ to the diameter, and so does not affect the logarithmic bound.
This helps to ensure that the diameter of $G(P')$ will be $O(\ln N)$.

\end{proof}

Observe that graphs $G(P')$ corresponding to the finite presentation of the trivial group that
were constructed in the proof of previous theorem
are connected graphs of degree $\leq 24$ with diameter $\leq Const \ln N$.
For geometric applications in the next section we need the following theorem:

\begin{thm} \label{graphthm}
Let $G$ be a connected multigraph with $N$ vertices of bounded degree linearly embedded in the PL-sphere
 $S^p=\partial\Delta^{p+1}$
of dimension $p>3$. Consider $p$ and the upper bound for the degree of vertices of $G$
as constants, and $N$ as large variable. Assume, further, that the diameter of $G$ does
not exceed $O(\ln N)$. Then there exists a triangulation of $S^p$ with $O(N\ln N)$ simplices
such that all vertices of $G$ and its edges are $0$-dimensional and $1$-dimensional simplices
of the triangulation.  
\end{thm}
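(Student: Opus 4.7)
The plan is to build the triangulation in two main stages: first a small regular neighborhood of the (given) embedded $G$ that realizes $G$ as a 1-subcomplex using $O(N)$ simplices, then an $O(N\ln N)$-simplex triangulation of the complement that exploits a BFS spanning tree of $G$ together with the diameter hypothesis.

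For the first stage, I use that $p>3$ forces the codimension of $G$ to be at least $3$, so general position guarantees that the given linear embedding has edges meeting only at their common endpoints. I would pick a radius $\epsilon>0$ small enough that the closed $\epsilon$-balls $B_v\subset S^p$ around the vertices $v$ of $G$ are pairwise disjoint and meet no non-incident edge, and the thinner $(\epsilon/2)$-neighborhoods $T_e$ of the truncated edges $e\setminus\bigcup_v\operatorname{int}(B_v)$ are disjoint tubes $\cong D^1\times D^{p-1}$. Each $B_v$ is triangulated as a cone from $v$ over a small PL $(p-1)$-sphere on $\partial B_v$, with one distinguished vertex per incident edge of $G$ at the point where that edge crosses $\partial B_v$; by bounded degree this uses $O(1)$ simplices. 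Each tube $T_e$ is triangulated as a prism with $e$ as its core, also $O(1)$ simplices. The regular neighborhood $R=\bigcup_v B_v\cup\bigcup_e T_e$ thus gets an $O(N)$-simplex triangulation that realizes the linearly embedded $G$ as a subcomplex and induces an $O(N)$-simplex triangulation on $\partial R$.

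For the second stage, set $M=S^p\setminus\operatorname{int}(R)$; I must extend the boundary triangulation of $\partial R$ to $M$ with only $O(N\ln N)$ additional simplices. Pick a BFS spanning tree $\mathcal{T}\subset G$ rooted at any vertex, of depth $h=O(\ln N)$ (possible by the diameter hypothesis). The sub-neighborhood $N(\mathcal{T})=\bigcup_v B_v\cup\bigcup_{e\in\mathcal{T}}T_e$ is a regular neighborhood of the contractible $1$-complex $\mathcal{T}$, and by the codimension-$\geq 3$ assumption both $N(\mathcal{T})$ and $B^{\ast}:=S^p\setminus\operatorname{int}(N(\mathcal{T}))$ are PL $p$-balls. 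The complement $M$ is obtained from $B^{\ast}$ by removing the open tubes of the $b_1(G)=O(N)$ non-tree edges. I triangulate $B^{\ast}$ in a BFS-layered fashion: process the BFS layers $L_0,L_1,\ldots,L_h$ of $\mathcal{T}$ one at a time, at each step attaching an annular shell whose portion of $\partial N(\mathcal{T})$ is the part coming from $L_k$. Each shell uses $O(|L_k|)$ simplices, so summing over $h+1=O(\ln N)$ layers gives $O(N\ln N)$. Finally, I absorb each of the $O(N)$ non-tree-edge tubes into the triangulation of $B^{\ast}$ by a local subdivision; since a tube sits between two layers and traverses at most $h=O(\ln N)$ shells, each absorption costs $O(\ln N)$ simplices, again totalling $O(N\ln N)$. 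Removing the open tubes from $B^{\ast}$ yields the triangulation of $M$, and gluing to $R$ along $\partial R$ yields the full triangulation of $S^p$.

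The main obstacle is making the BFS-layered triangulation of $B^{\ast}$ coexist with the \emph{given} linear embedding, rather than with one we might design. Concretely, one has to construct a piecewise-linear ``depth'' function on $B^{\ast}$ whose level sets are simplicial subcomplexes of the desired triangulation, whose successive shells grow in complexity by only $O(|L_k|)$, and through which each non-tree tube --- whose precise shape is dictated by the ambient embedding --- traverses at most $O(\ln N)$ shells. The linearity of the embedding is essential here: it bounds the geometric complexity of tubes relative to any sufficiently canonical shell decomposition, and it is what allows the spanning-tree combinatorics to translate into PL control. Verifying this requires combining codimension-$\geq 3$ general-position theory with the BFS depth control on $\mathcal{T}$, and it is the technical heart of the proof.
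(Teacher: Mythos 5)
Your first stage (an $O(N)$-simplex regular neighborhood $R$ of $G$, with $G$ as a subcomplex) is fine and agrees with the spirit of the paper. The gap is in the second stage, and you have in fact located it yourself: the ``BFS-layered'' triangulation of $B^\ast=S^p\setminus\operatorname{int}(N(\mathcal T))$ together with the ``absorption'' of the $O(N)$ non-tree tubes is not a construction but a wish. There is no reason an arbitrary linear embedding of $G$ admits a triangulation of the complementary ball $B^\ast$ that simultaneously (a) has $O(N\ln N)$ simplices and (b) contains every non-tree tube as a subcomplex after only $O(\ln N)$ local subdivisions per tube: making a straight segment (let alone $O(N)$ of them) simplicial inside a given economical triangulation of a ball has no a priori cost bound, and each absorption would further subdivide the complex that the next tube must be compatible with. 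The BFS depth of $\mathcal T$ controls combinatorial distances in the graph, not how the ambient tubes meet any shell structure in $S^p$, so the claim that a tube ``traverses at most $h$ shells'' at cost $O(1)$ per shell is unsupported. This is exactly the difficulty the theorem is about, so leaving it as ``the technical heart'' leaves the proof incomplete.

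The paper sidesteps this entirely by never triangulating the complement of a neighborhood of $G$ with any geometric control. Instead it first enlarges $G$ to a $2$-complex $K$: take a cycle basis of $G$ coming from a spanning tree, note that each basis cycle has length $\leq 2\operatorname{diam}(G)+1=O(\ln N)$ by the diameter hypothesis, and cone each such cycle by a $2$-cell subdivided into $O(\ln N)$ triangles; with $O(N)$ basis cycles this gives a simply connected $2$-complex with trivial reduced homology, hence contractible, with $O(N\ln N)$ simplices. A regular neighborhood $Q$ of $K$ is then a PL $p$-ball triangulated with $O(N\ln N)$ simplices and containing $G$ in its $1$-skeleton, and the complementary ball $S^p\setminus Q$ is triangulated as the cone over $\partial Q$ with a single new vertex --- no compatibility with $G$ is needed there because $G$ (and everything one cares about) already lies inside $Q$. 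The one place you would need a new idea to complete your version is precisely the step this coning trick renders unnecessary; I would recommend replacing your second stage by it. (Note also a minor slip in your accounting: $\sum_k O(|L_k|)=O(N)$, not $O(N\ln N)$, though this does not affect the bound.)
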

\begin{proof}
First, we fill cycles corresponding to loops, then digons corresponding to multiple edges, so that
it remains to fill cycles of the corresponding graph $G'$ (where all loops are removed, and multiple edges are replaced 
by single edges).
Consider a cycle basis of $G'$ obtained from a spanning rooted tree. Each element of this basis
corresponds to an edge of $G'$ not in the spanning tree. This edge together with two
paths from the root to the endpoints forms a generator of the (free) fundamental group of $G'$;
this generator can be killed by attaching a $2$-cell to the boundary of a simple cycle
formed by the edge and two segments of the paths from the endpoints to the root that go to the
nearest common vertex $v$ of these paths. The length $L$ of this cycle does not exceed $2diam (G')+1=O(\ln N)$.
The $2$-cell can be subdivided into $L-2$ $2$-simplices with addition of $L-3$ new $1$-simplices
connecting $v$ with the vertices of the simple cycle.
We can perform the same operation to all simple cycles of the considered cycle basis, 
obtaining a simply connected simplicial $2$-complex $K$ with trivial reduced homology groups, and, thus, contractible .
The number of cycles does not exceed the number of edges of $G'$ which is $O(N)$. Therefore, the total number of simplices in $K$ will
be $O(N\ln N)$. 

Now consider a small simplicial tubular neighborhood $Q$ of $K$ that must be PL-homeomorphic
to the $p$-disc $D^p$. As it can be built of nicely intersecting neighbourhoods of individual
$0$-, $1$-, and $2$-simplices of $K$, we can construct $Q$ explicitly triangulated with $O(N\ln N)$
simplices. The boundary of $Q$ which is a PL- (p-1)-sphere will also be triangulated.
Now we can complete the triangulation of $Q$ to a triangulation of $S^p$ by triangulating the
$p$-disc $S^p\setminus Q$ as the cone over the triangulation of $\partial Q$ (with one new
$0$-simplex). Note that here we loose a control over complexities of (maps of) simplices
in this triangulation (measured as the number of simplices in the corresponding PL-mappings
from the considered simplex to $S^p$), but we do not care about this information. What matters
for us is that the number of simplices in the resulting triangulation will be $O(N\ln N)$,
as claimed.  

\end{proof}

\section{Triangulations} \label{triang}

Our next goal will be to transform each of $\lfloor N^{CN}\rfloor$ balanced finite presentations of the trivial group constructed in Theorem \ref{superexppr}
into triangulations of $S^4$ (or the disc $D^4$) with $O(N\ln N)$ $4$-simplices that are far from each
other in the metric defined as the minimal number of bistellar transformations required
to pass from one triangulation to the other.
 
For this purpose we would like to consider the connected sum of $S^1\times S^3$ (one copy for each generator), triangulate it,
represent the elements of the (free) fundamental
group of this manifold by embedded closed curves, subdivide the triangulation so that these curves
become simplicial curves each having a simplicial neighbourhood simplicially isomorphic to $S^1\times D^3$, remove
each of these neighborhoods,  attach copies of $D^2\times S^2$ along the boundaries of the $S^1\times D^3$, and then extend the triangulations
of $\partial (D^2\times S^2)$ to the triangulations of $D^2\times S^2$.
(This is a standard construction of a manifold with a prescribed fundamental group; cf. \cite{bhp}.)

Now we need to verify that this construction can be performed so that the resulting number of simplices will be $O(N\ln N)$; we will
use the proof of Theorem \ref{graphthm}. The idea is to take an $S^4$ and connect to it copies of $S^3\times S^1$ corresponding to generators of $P'$. Then we need to spell out the relations by some curves. We will run the curves parallel to each other inside the $S^3\times S^1$, in $S^4$ the curves will be placed according to the graph $G(P')$ from Theorem \ref{graphthm}. Below are the details. \par

 To each generator of $P'$ we assign a copy
$R_i$ of $S^3\times S^1$ minus a small $4$-dimensional disc. Denote its boundary $S_i$. We assume that
all $S_i$ are also boundaries of small ``holes" $D_i$ in a copy of $S^4$ ($S_i=\partial D_i$). In $S_i$ we are going to have
$O(1)$ points separated into $O(1)$ groups contained in disjoint small discs (``stations"). These points will be pairwise connected
with each other by arcs inside $R_i$ (which together with any arc connecting the endpoints in $S_i$ will represent the generator
of $\pi_1(R_i)$).  All points in the $j$th station of $S_i$ will be connected with exactly the same number of points in
the $i$th station of $S_j$. We can assume that these arcs run inside a thin cyclinder (``cable") $D^3\times [0,1]$ parallel to each other.
Each cable can be triangulated into $O(1)$ simplices; cables are in the obvious 1-1 correspondence with
edges of the graph $G''(P')$ obtained from $G(P')$ by making the multiplicity of edges and loops equal to one.
Now we observe that if we fill each $S_i$ by a $4$-disc, then these discs and cables can be regarded as a thickening of
a copy of $G''(P')$ in $S^4$; the triangulation of cables can be extended to a triangulation of $S^4\setminus\bigcup D_i$ with
$O(N\ln N)$ simplices exactly as in the proof of Theorem 6.6. This triangulation can be extended to a triangulation
of $S^4$ by adding cones over the triangulations of $S_i$.
Now note that $R_i$ together with $O(1)$ arcs can be triangulated into $O(1)$ simplices; insert this triangulation in one of the $4$-simplices in the cone
over $S_i$ and retriangulate the complement in this simplex into $O(1)$ simplices. Now we need to attach $O(1)$ paths between vertices
of $S_i$ in the triangulation of $S^4\setminus\bigcup D_i$ with the corresponding points in the triangulation of $\partial R_i$.
We are talking about $O(1)$ paths; each of them can pass once through
$O(deg(v_i))$ $4$-dimensional simplices in the triangulation of $D_i$. Here
$v_i$ denotes the vertex of $G''(P')$ corresponding to the $i$th generator of $P'$, and $deg(v_i)$
denotes its number of neighbours in the $1$-skeleton of the triangulation of $S^4$ constructed in the
proof of Theorem 6.6. We retriangulate these simplices in order to make these
$O(1)$ paths simplicial, which will require $O(deg(v_i))$ new simplices.
Since all these degrees sum up to $O(N\ln N)$, the total number of simplices will still be
$O(N\ln N)$.

Now all relators are represented by simple simplicial cycles; removing their tubular neighborhoods and retriangulating increases the number
of simplices by at most a constant factor. Attaching $2$-handles $S^2\times D^2$ increases the number of simplices by a $O(N)$
summand, and we are done.
\par
As the result, we obtained a simplicial complex $T$ with $O(N)$ simplices and a PD-homeomorphism (piecewise differentiable) $f: |T| \to S^4$. There exists the usual PD-homeomorphism $g: \partial \Delta^5 \to S^4$. By the standard theory then there exist a PL-homeomorphism $|T| \to \partial \Delta^5$ (an approximation to $g^{-1} \circ f$). Therefore $T$ is a triangulation of $\partial \Delta^5$, where the initial finite presentation
can be regarded as an ``apparent" finite presentation of the
fundamental group of $T$ in the following sense. Let $N$ denote the number of 
$4$-dimensional simplices of $T$. (Note that here we changed the meaning of
the notation $N$.)

\begin{defn}
Let $T$ be a simplicial complex with $N$ maximal simplices. A finite presentation is
an apparent presentation of $\pi_1(T)$ if it can be connected
by at most $2^{2^N}$ Tietze transformations with one of the following presentations:
Choose a maximal rooted spanning tree of the $1$-skeleton of $T$. Its root will be regarded as a base point.
There is one generator for each edge of the $1$-skeleton not in the tree,
and one (obvious) relation for each $2$-simplex of $T$.
\end{defn}

(We do not really need $2^{2^N}$ in this definition and believe that we could define ``being apparent" as $CN^2$-close,
but prefer to be generous.)
It is not difficult to see that for some constant $C$ two apparent finite presentations of $\pi_1(T)$
can be connected by $3CN^2$ Tietze transformations. Indeed, one needs to check this only for presentations corresponding to different spanning
trees. It is easy to see that one can pass from one spanning tree to another by elementary operations of the following type: Add an edge
not in a tree and remove an adjacent edge in the (unique) resulting cycle. This fact was first observed by L. Lovasz and A. J. Bondy (cf. \cite{lovasz}).
It is easy to prove this fact by induction with respect to the sum of the number of vertices $V$ and the number of edges $E$ of the ambient graph $G$.
(Indeed, take an edge $e$ of $G$. If both trees do not contain $e$, consider $G-e$ and apply the induction assumption. If they both contain
$e$ replace $G$ by $G/e$ and apply the induction assumption. If one of the trees, $T_1$, does not contain $e$ but the other tree, $T_2$, contains $e$, add $e$
to $T_1$ and remove an adjacent edge from the resulting cycle reducing the situation to the case when both trees contain $e$, and the induction
assumption applies.) This proof leads to an upper bound $V+E$ for the number of the required operations, that in our case is $\leq const N$.
It remains only to check what happens with the finite presentation of $\pi_1(T)$ during one such elementary operation on
spanning trees and to verify that the change of the presentation can be  described as the result of at most $const N$ Tietze operations. This fact
easily follows
from the observation that each generator of $\pi_1(T)$ in the new presentations corresponds either to an old generators
or to the result of conjugation of one old generator by another.
 
Recall that a bistellar transformation (a.k.a. a Pachner move)
of an $n$-dimensional simplicial complex replaces a simplicial subcomplex that is composed of between $1$ and $n+1$
$n$-dimensional simplices and is isomorphic to a part of the boundary $\partial \Delta^{n+1}$ of an $(n+1)$-dimensional simplex by the
complementary part of $\partial \Delta^{n+1}$. Finally we will need the following lemma.

\begin{lem} If simplicial complexes $T_1, T_2$ are related by just one bistellar transformation, then
there are spanning trees for $T_1$ and $T_2$ such that the presentations of $\pi_1(T_1)=\pi_1(T_2)$ corresponding to those trees can be transformed one into the other by just $const$ Tietze transformations.
\end{lem}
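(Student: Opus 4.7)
The key observation is that a bistellar transformation is purely local: it replaces a subcomplex $C$ of at most $n+1$ top-dimensional simplices by $\partial\Delta^{n+1}\setminus C$, involving at most $n+2$ vertices and, hence, a constant number (depending only on $n$) of $0$-, $1$-, and $2$-simplices of $T_1$ and $T_2$. Outside this affected region, the triangulations $T_1$ and $T_2$ agree as simplicial subcomplexes. In particular, every $2$-simplex of $T_1$ not in $C$ lies in $T_1\setminus C=T_2\setminus(\partial\Delta^{n+1}\setminus C)$ and has all its edges there as well.

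The first step is to choose spanning trees $S_1\subset T_1$ and $S_2\subset T_2$ whose intersections with the unchanged part coincide. I would start with an arbitrary spanning tree of $T_1\setminus C$ (viewed as a forest in both $T_1$ and $T_2$), then complete it to spanning trees $S_1$, $S_2$ of $T_1$ and $T_2$ by adding edges drawn only from the affected regions $C$ and $\partial\Delta^{n+1}\setminus C$ respectively. Because the affected region in each complex consists of a bounded number of simplices that collapse to a disc, such completions exist and involve at most a constant number of ``local'' edges in each case.

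With this choice, the generators of the two presentations corresponding to edges outside the affected region are identical, and the relations coming from $2$-simplices outside the affected region are literally the same words in those shared generators. The two presentations therefore differ only in a constant number of generators (edges inside the affected region that are not in $S_i$) and a constant number of relations (the $2$-simplices inside the affected region). To pass from the first presentation to the second by Tietze moves, I would: (i) use $Op_5$ to introduce each new generator of $T_2$ as a word of bounded length in the old generators, obtained by tracing a path in the affected region of $T_1$ between the endpoints of the corresponding new edge; (ii) use $Op_1$--$Op_4$ together with the finitely many ``old'' relations of $C$ to derive each new relation of $\partial\Delta^{n+1}\setminus C$; and (iii) use $Op_5^{-1}$ and $Op_6$ to delete the now-redundant old generators and relations. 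Since every object involved has size bounded by a constant depending only on $n$, and all the paths and derivations live inside the bounded local piece, this whole process uses at most $\mathrm{const}$ Tietze moves.

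The main obstacle is step (ii): one must check that each new relation can actually be derived from the old ones via a bounded sequence of elementary operations. This is where the fact that both $C$ and $\partial\Delta^{n+1}\setminus C$ are discs (hence simply connected) is essential, since it ensures that the loop formed by a new $2$-simplex boundary can be filled, on the level of words, by a bounded product of conjugates of old relations coming from $C$; the lengths of the conjugating words are bounded by the diameter of the affected $1$-skeleton, which is itself bounded in terms of $n$ only. Once this is verified for the finitely many combinatorial types of bistellar moves in dimension $n$, the constant in the lemma can be taken as the maximum over these types.
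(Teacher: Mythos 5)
Your argument follows essentially the same route as the paper's: both exploit the locality of the Pachner move, choose spanning trees of $T_1$ and $T_2$ that coincide outside (equivalently, on the boundary of) the affected subcomplexes, note that the resulting presentations then agree except in a bounded local piece, and appeal to the finiteness of combinatorial types of bistellar moves in a fixed dimension to bound the number of Tietze transformations. You spell out the individual Tietze moves in somewhat more detail than the paper does, but the decomposition and the source of the constant are the same.
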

\begin{proof}
We can first consider the subcomplexes of $T_1$ and $T_2$ (call them $R_1, R_2$) involved in the transformation, and take some spanning trees for them which coincide on the boundary. Since they have the same $\pi_1$, the presentations corresponding to the trees can be connected by Tietze transformations. There is a finite number (for a fixed dimension) of possible bistellar transformations, therefore there is a bound on how many Tietze transformations we need. Now, we can extend the trees to spanning trees of the whole of $T_1$ and $T_2$ (extend them in the same way). The corresponding presentations of $\pi_1(T_1)=\pi_1(T_2)$ will differ by the same Tietze transformations as the presentations for $\pi_1(R_1)=\pi_1(R_2)$.
\end{proof}

Combining all these observations with Theorem B (Theorem \ref{ch5:mainthmk}) we see that
we obtain the following theorem in the case when $M^4=S^4$. 

\begin{thm}
Let $M^4$ be a simplicial closed manifold or a compact manifold with boundary of dimension $4$. There exists a constant $C>1$ such that for
each $m$ all sufficiently large $N$ there exist more than $C^N$ triangulations of $M^4$ with
at most $N$ $4$-dimensional simplices such that no pair of these triangulation can be transformed one into the other by less than $\exp_m(N)$ bistellar
transformations.
\end{thm}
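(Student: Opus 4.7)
The plan is to assemble the machinery developed in this section. First, I would reduce to the case $M^4 = S^4$: as observed just after the statement of Theorem A in Section 1, given $C^N$ pairwise bistellar-distant triangulations of $S^4$ one obtains $C^N$ candidates for general $M^4$ by connected-summing each with a fixed triangulation of $M$; the lower bound transfers because an apparent presentation of $\pi_1(T_i \# M)$ takes the form $P_M * P_i$, and Theorem \ref{nontrivial} guarantees that these free products remain Tietze-distant.

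Now fix $m$. I would apply Theorem \ref{superexppr} with tower parameter $m+1$ and length parameter $N_1$, obtaining at least $N_1^{C N_1}$ balanced presentations $P_1,\ldots,P_L$ of the trivial group of length $\leq N_1$, with every relator of length $\leq 3$, every generator appearing in at most $12$ relators, and the multigraph $G(P_i)$ of diameter $O(\ln N_1)$; any two require more than $\exp_{m+1}(N_1)$ Tietze transformations to interconvert. To each $P_i$ I would attach a triangulation $T_i$ of $S^4$ by the thickening recipe described just before the theorem: embed $G(P_i)$ linearly in a triangulated $S^4$ via Theorem \ref{graphthm}, attach a copy of $S^3\times S^1$ (minus a small disk) at each generator-vertex, route simplicial relator loops through cables along edges of $G(P_i)$, and kill each with an $S^2\times D^2$ handle. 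Balancedness produces a homology sphere; triviality of $\pi_1$ together with Freedman and the PL-approximation step of the preceding discussion turn it into a PL triangulation of $S^4$, with $P_i$ realized as an apparent presentation of $\pi_1(T_i)=1$. The simplex count is $O(N_1\ln N_1)$, so setting $N := \lceil c N_1 \ln N_1\rceil$ for a suitable $c$ gives triangulations with at most $N$ simplices, and the count is at least $N_1^{CN_1} = \exp(CN_1\ln N_1) \geq \exp(C'' N)$, which exceeds $C^N$ for some absolute $C>1$.

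The crux is the bistellar lower bound. Suppose $T_i$ and $T_j$ were joined by a sequence $T_i = S_0 \to \cdots \to S_B = T_j$ of $B \leq \exp_m(N)$ bistellar moves. By the preceding lemma, each elementary step translates into a bounded number of Tietze moves between apparent presentations at well-chosen pairs of spanning trees; switching spanning trees between consecutive steps on the same intermediate triangulation costs $O(N^2)$ Tietze moves by the Lov\'asz--Bondy argument already recalled. Chaining yields a Tietze path $P_i \to P_j$ of length $O((B+1)N^2) \leq \mathrm{poly}(N)\cdot \exp_m(N)$. Since $N = O(N_1\ln N_1)$, this is vastly smaller than $\exp_{m+1}(N_1) = 2^{\exp_m(N_1)}$ for large $N_1$, because the outer exponentiation easily absorbs both the logarithmic factor in the argument of $\exp_m$ and the polynomial prefactor. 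This contradicts the Tietze separation of the $P_i$, forcing $B > \exp_m(N)$.

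I expect the main subtleties to be bookkeeping: verifying that the thickening really fits in $O(N_1\ln N_1)$ simplices relies on the diameter-$O(\ln N_1)$ bound on $G(P_i)$ from Theorem \ref{superexppr} together with the efficient graph-embedding of Theorem \ref{graphthm}, and pinning down the exact Tietze cost of a single bistellar move plus a spanning-tree swap must be done carefully. Both are handled by lemmas already in place, so once these are invoked the remaining assembly is routine.
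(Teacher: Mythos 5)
Your proposal is correct and follows essentially the same route as the paper: reduce to $S^4$ via connected sum and Theorem~\ref{nontrivial}, apply Theorem~\ref{superexppr} (with a shifted tower parameter), realize each presentation as a triangulation of $S^4$ with $O(N_1\ln N_1)$ simplices using the graph-embedding of Theorem~\ref{graphthm} and the surgery/thickening recipe, and translate a hypothetical short bistellar path into a short Tietze path via the bistellar-to-Tietze lemma together with the Lovász--Bondy spanning-tree swap bound. One small point worth noting (which the paper leaves implicit under ``combining all these observations''): the intermediate triangulations along a bistellar path of length $B$ can have up to $O(N+B)$ simplices, so the per-step spanning-tree swap cost should be bounded in terms of the intermediate size rather than the endpoint size $N$; this only changes the polynomial prefactor into a polynomial in $\exp_m(N)$, which is still dwarfed by $\exp_{m+1}(N_1)$, so the contradiction goes through unaffected.
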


In order to obtain the general case we take any fixed triangulation of $M^4$ and take its simplicial connected sum with the constructed
triangulations of $S^4$. The result will immediately follow when $M^4$ is simply-connected. If it is not simply-connected, we will need to verify that
considered balanced presentations of the trivial group will remain
very distant from each other even after we will take the free product
with an apparent finite presentation of $\pi_1(M^4)$ constructed using
the chosen triangulation of $M^4$. This follows from Theorem \ref{nontrivial}. That finishes the proof of Theorem A.

Theorem C.1 follows more directly. Let $a$ be a generator of the presentation used to construct $K_1$ realized as a simplicial path on $K_1$ in the obvious way. Then the existence of the homotopy $H_1$ gives us that the area $F_1(a)a^{-1}$ is less than the witness complexity (area is measured in an apparent presentation of ${K}_1$). Therefore we obtain an $(L,L)$-effective isomorphism (see Definition \ref{effectiveiso}), where $L$ is equal to the witness complexity, between apparent presentations of $K_i$.

Finally, we would like to sketch the proof of Theorems A.1 and A.1.1.

\begin{proof}

We start with $M=S^4$. We are not going to use the constructions of Section 6 and the beginning of this section. Instead we use the exponential number of finite presentations from Theorem B with $x=O(N)$. Note that for all sufficiently large $N$ we can choose them so that they are not pairwise $(L_1, L_2)$-isomorphic
 (in the sense of Definition \ref{effectiveiso}) for all values of $L_1$, $L_2$ up to the $\exp_{m+10}(N)$. Recall that these finite presentations have four generators
and four relators.

At the first stage of this proof for each of these finite presentations we construct the corresponding Riemannian metric on $S^4$.
We take the connected sum of four copies of handles $S^1\times S^3$. We need to realize the relators by simple closed curves with the injectivity radius of the normal exponential map not less than $1$.
Then we will remove tubular neighborhoods of these curves, insert the $2$-handles killing the relators and smooth out the corners. If at the end the upper bound for the absolute value of the sectional curvature will not be
$1$ but $const$, we can always rescale the metric to the desired upper bound without significantly affecting the other quantities.  In order to do that we can choose
Euclidean product metrics on all spheres/handles/discs involved in the construction with all linear sizes $O(N)$.  But, in fact, we can ensure that
the diameter of the resulting Riemannian $S^4$ will  be $O(\ln N)$ while the volume will be $O(N^3)$. To do that we attach the $1$-handles $S^1\times S^3$ to a Riemannian sphere $S^4$ glued out of two hyperbolic discs of radius $O(\ln N)$ glued along the common boundary (and then smoothed out). The closed curves corresponding
to relators go between each pair of handles in ``bunches" almost parallel to each other. But they need to wiggle inside the handles as they need to arrive
to a right position on the other side. As we need to keep the distance $O(1)$ between them, it is obviously sufficient
to choose the length of $S^1$ in at least some of the handles
as $\sim N^2$, and the
diameter will be $\sim N^2$ as well. (Note that transversal $S^3$ must also have volume $\sim N$, but we can glue it out of $2$ copies of a hyperbolic
$3$-disc of radius $O(\ln N)$, so the problem with the diameter does not arise.) 
Yet we are going to kill the relators by attaching $2$-handles with hyperbolic $2$-discs at their axes.
The circumference of these discs is $O(N^2)$, so the radius $O(\ln N)$ would suffice. The dimensions of perpendicular $S^2$s will be 
$O(1)$. Now the diameters of $1$-handles will
become $O(\ln N)$ as we now can move from a point to a point through the $2$-handles. As the volume $x$ will be bounded by $const N^3$, we will be ``entitled"
to lower bound of the form $\exp(const\ x^{1\over 3})$ for the number of the pairwise distant metrics $\mu_i$. It is now clear that
the constructed metrics satisfy property 1 in the text of Theorem A.1 and conditions of Theorem A.1.1 for $c_0={1\over 3}$.

We must prove that the constructed Riemannian metrics are not pairwise $\exp_m(x)$-bi-Lipschitz homeomorphic. We would like to define the concept of apparent finite presentations.
We will need this concept for Riemannian $4$-spheres $M$ from $Al_1(S^4)$ that also satisfy a positive lower
bound for the volume and an upper bound for the diameter as in Theorem A.1.1 (or Condition 1 of Theorem A.1).
Assume that $\epsilon_0$ is less than ${1\over 1000}$ of the standard lower bound for the convexity
radius for manifolds satisfying curvature, diameter, volume constraints as in Theorem A.1.1 (or Condition 1 of Theorem A.1).
(This $\epsilon_0$ behaves $\sim {v\over\exp (3D)}$, where $D$ is an upper bound for the diameter of the manifold).
We are going to $\epsilon_0/10$-approximate $M$ by a metric simplicial $2$-complex $K$ (in the Gromov-Hausdorff metric), so
that all closed curves in $K$ of length $<10\epsilon_0$ are contractible with increase of length only by at most a constant factor
during a contracting homotopy. We would like to call a presentation of $\pi_1(K)$ obtained using any spanning tree of its $1$-skeleton
``an apparent finite presentation of $\pi_1$" of $M$. However, as we are going to have a lot of ambiguity in constructing $K$, and, then,
choosing  the spanning tree, we are going to say that all finite presentations that are $(L_1,L_2)$-effectively isomorphic to 
the first chosen finite presentation of $\pi_1(K)$ (for one possible choice of $K$) are also apparent. Here we generously allow
$L_1$ and $L_2$ to grow triply exponentially with $x$ (the ``$x$'' from Theorem A.1.1). In this way, we are going to eliminate
the dependence on all choices that one needs to make while defining a finite presentation of $\pi_1(M)$. Also, it will be clear from
the construction of $K$ below that   
the finite presentations $\mu_v$ used to define Riemannian $4$-spheres $M=M_{\mu_v}$ in the previous paragraph will also be apparent
finite presentations of $\pi_1$ of $M_{\mu_v}$.

A key fact behind this idea is that two Gromov-Hausdorff close compact length spaces with the same  contractibility function have effectively
isomorphic
fundamental groups (cf. \cite {petersen}). (Note that we have much more than a linear filling function here -- we have a control over
the Lipschitz constant of a homotopy contracting short curves on $M$ and, as we will see, the same will be true for $K$.) The idea
is that if $X_1, X_2$ are two $\delta$-close spaces, where all closed curves of length $<10\delta$ can be ``easily" contracted we can construct
isomorphisms between $\pi_1(X_1)$ and $\pi_1(X_2)$ proceeding as follows: Represent an element of $\pi_1(X_1)$ by a closed curve $\gamma$.
Subdivide it into $\bar N$ arcs of equal small length ( here $\bar N$ is a fixed large parameter). We can associate points $x_i\in\gamma$ of the subdivision
to the nearest points $x_i'$ of $X_2$ (here we assume that we have chosen a metric on the disjoint connected sum of $X_1$ and $X_2$ realizing the
Gromov-Hausdorff distance). We call $x_i'$ a transfer of $x_i$. Pairwise connect $x_i'$ by geodesic arcs to obtain a ``transfer" $\gamma'$ of $\gamma$ on $X_2$.
Note that when we take the transfer of $\gamma'$ back to $X_1$, we obtain a closed curve $\gamma''$ formed by arcs between $x_i''$ and $x_{i+1}''$
obtained by transfer of $x_i'$ back to $X_1$. Now a construction of a homotopy between $\gamma$ and $\gamma''$ effectively reduces to contracting
all quadrilaterals $x_ix_i'' x_{i+1}x_{i+1}''$ that have length $<9\delta$, if $\bar N$ is sufficiently large. If we take $X_1 = K_1, X_2 = K_2$ for $2$-complexes $K_i$ with the properties described in the previous paragraph, then the presentations of $\pi_1$ of these complexes (and therefore of $M$) obtained from spanning trees will be effectively isomorphic.

Now we are going to explain how to construct $K$. Take $\epsilon=\epsilon_0/100,\ \delta={\epsilon^2\over 10000diam (M)}$.
Consider a $\delta$-net in $M$ such that metric balls of radius $\delta/4$ centered at points of the net do not intersect.
Croke's inequality will imply a positive lower bound $const\delta^4$ for the volumes of pairwise non-intersecting balls, and we obtain
an upper bound ${\exp(const\ diam(M))\over v^{const}}$ for the number of points in the net. Construct
a metric graph (the $1$-skeleton of $K$) by connecting each pair of $\epsilon$-close points by a $1$-simplex. The length of this simplex
is defined as the distance between the vertices in $M$. As in the proof of Lemma 7.5.5 in \cite {burago} we see that the resulting metric graph $\epsilon\over 1000$-approximates $M$. Now we are going to fill
all triangles in the graph of perimeter $\leq\epsilon/3$ by $2$-simplices. The metric on each $2$-simplex is defined as the metric on a round
hemisphere of radius equal to ${P\over 2\pi}$, where $P$ is the perimeter of the filled triangle. In this way when we are connecting
two points on the boundary of one of these triangles by a geodesic, there will be no
shortcuts through the interior of the $2$-simplex. Therefore, the distance between any two points of the $1$-skeleton of the resulting
metric $2$-complex $K$ will be the same as the distance between these points in the $1$-skeleton. On the other hand $K$ will be $\epsilon/12$-close
to its $1$-skeleton, and therefore $\epsilon/10$-close to $M$.
We claim that not only all simplicial closed curves of length $<\epsilon /2$ are contractible, but all closed curves
of length $\leq 1000\epsilon= 10\epsilon_0$, as we claimed. It is sufficient to check this for simplicial curves of
length $\leq 2000\epsilon$. In order to contract a simplicial curve $\gamma$ on $K$ of length $\leq 2000\epsilon$
we first discretize it and transfer it to $M$ ( as it was described above). Then we
contract the result of the transfer in $M$ by a length non-increasing homotopy. ( Here we
are using the fact that the length of the transfer of $\gamma$ to $M$ will still be less than the convexity radius of $M$.)
Then we discretize this homotopy, and transfer the discrete set of close curves in this homotopy back to the $1$-skeleton of $K$ that
is ${\epsilon\over 1000}$-close to $M$. Now we need to
connect by homotopies in $K$ $\gamma$ with its image after two transfers, and the transfers of each pair of consecutive curves.
In order to do this we need to contract simplicial quadrilaterals of length $<<\epsilon$, and, in particular, $<\epsilon/6$. We can
represent each of these two quadrilaterals as
the union of two simplicial triangles that will still be sufficiently short to be filled by $2$-simplices in $K$.
(Note that when we need to contract a closed curve in $K$ that is not necessarily simplicial, we can start from sliding all arcs in
the interiors of $2$-simplices to the boundary by length non-increasing homotopies reducing the situation to the case of simplicial closed curves.)

The same idea of transfer of closed curves and homotopies contracting closed curves can be easily adapted to show that
presentations corresponding to different choices of spanning trees in the $1$-skeleton of $K$ or different choices of nets
in the construction of $K$ lead to $(L_1,L_2)$-effectively isomorphic presentations for acceptable for us values of $L_1, L_2$. The proof of the first fact
can also be done along the same lines as the proof of a similar assertion in the PL-case that was considered above in this section. The second assertion
follows from the fact that $K_1$ and $K_2$ that correspond to different choices of nets will still be Gromov-Hausdorff close to $M$, and,
therefore, to each other. Further, if $M_1$ and $M_2$ are $\epsilon_0$-close, then an apparent presentation of the
fundamental group of one of them will be an apparent finite presentation of the other.

If we would like to extend this observation
for $M_1$ and $M_2$ that can be connected by a (not very long) sequence of short jumps (as in condition 2 of Theorem A.1), then we will need to
(controllably) increase $L_1$ and $L_2$ in the condition ``being $(L_1, L_2)$-effectively isomorphic".
An effective proof of the precompactness of sublevel
sets of diameter on $Al_1(M)$ implies that if two points in the sublevel set $diam^{-1}((0,x])$ can be connected by a finite sequence of short jumps, they can be connected
by a sequence of short jumps of length that does not exceed $\exp(\exp(const\ x))$. Such a sequence of short jumps
would add two extra exponentiations to our upper bounds for $L_1, L_2$. If the jumps go through Riemannian metrics with much higher
values of the diameter, say, up to $\exp_m(x)$, and we correspondingly adjust $\epsilon_0$ (which now will behave as 
$const \exp(-3\exp_m(x))$), then $L_1, L_2$ will be bounded above by $\exp_{m+const}(x)$. 
Thus, if two Riemannian metrics $M_{\mu_{v_1}}$, $M_{\mu_{v_2}}$ can be connected by a sequence of short jumps as in Theorem A.1
hen the corresponding presentations $\mu_{v_1}$, $\mu_{v_2}$ are $(\exp_{m+const}(x), \exp_{m+const}(x))$-effectively
isomorphic (where one can safely choose $const=10$), which according to our assumption can happen only if $v_1=v_2$.
Thus, we directly obtain Theorem A.1 (bypassing Theorem A.1.1).

To prove Theorem A.1.1 we will choose $\epsilon_0={1\over\exp_{m+4}(x)}$. If the constructed Riemannian spheres are $\exp_m(x)$-bi-Lipschitz homeomorphic,
then the apparent finite presentations of their fundamental groups will be $(\exp_{m+1}(x), \exp_{m+1}(x))$-effectively isomorphic, and we obtain a contradiction proving the theorem. (A minor technical difficulty here is that when we realize a generator of the fundamental
group of one of these spheres by a broken geodesic, and map it into the other sphere using the Lipschitz homeomorphism $f$, we end up
with a curve that needs to be approximated by a broken geodesic. But our choice of $\epsilon_0$ ensures that the image of this broken
geodesic under $f^{-1}$ will be close to the broken geodesic representing the original generator of the fundamental group, and these
curves can be connected by an obvious homotopy that almost does not increase the length.)

Similarly to the proof of Theorem A, the case of general $M$ can be proven by forming a Riemannian connected sum of a fixed Riemannian metric on $M$ and the constructed very distant metrics on $S^4$ and using Theorem \ref{nontrivial}.
\end{proof}

\par\noindent
{\bf Acknowledgement:} This work was partially supported by NSERC Discovery grant 155879-12 of Alexander Nabutovsky.
The authors would like to thank Misha Gromov for his suggestion to recast Theorem C as an assertion about geometry of 
contractible $2$-complexes. (Remark BC and Theorem C.1 are our implementations of this suggestion.)

\bibliographystyle{alpha}

\bibliography{bibliography}

\end{document}